\documentclass[12pt,reqno]{amsart}
\usepackage{amsmath,amssymb,amsfonts,amsthm}
\usepackage[left=2.5cm, right=2.5cm, top=2.5cm, bottom=2.5cm]{geometry}

\usepackage{pstricks}                        
\usepackage{pstricks-add}               
\usepackage{pst-all,pst-math,pst-plot}       
\usepackage{textcomp}

\usepackage{tikz}
\usetikzlibrary{matrix}

\usepackage{graphicx,color}
\usepackage{graphicx}
\usepackage{amsmath}
\usepackage[all]{xy}
\usepackage{amsmath,amssymb}
\usepackage{amsthm}
\usepackage{amsfonts}
\usepackage{indentfirst}
\usepackage{geometry}
\usepackage[T1]{fontenc}
\usepackage{times}
\usepackage{color,graphicx}

\newtheorem{pr}{Proposition}

\newtheorem{lema}{Lemma}
\newtheorem{theorem}{Theorem}
\newtheorem{de}{Definition}
\newtheorem{remark}{Remark}

\newcommand{\R}{\mathbb{R}}	

\newcommand{\dint}{\displaystyle\int}

\newcommand{\varep}{\varepsilon}	

\begin{document}

	\title[Abstract integrodifferential equations in interpolation scales and applications]{Abstract integrodifferential equations and applications}
	

	\author[B. de Andrade]{Bruno de Andrade}
	\address[B. de Andrade]{Departamento de Matem\'atica, Universidade Federal de Sergipe, S\~ao Crist\'ov\~ao-SE, CEP 49100-000, Brazil.}
	\email{bruno@mat.ufs.br}
	\author[M. G. de Santana]{Marcos Gabriel de Santana}
	\address[M. G. de Santana]{Instituto Federal de Sergipe, Estância-SE, CEP 49200-000, Brazil.}
	\email{marcos.santana@ifs.edu.br}
	\date{}

	\begin{abstract}
	In this work, we study the initial value problem associated with an abstract integro- differential equation in interpolation scales. We prove local-in-time existence, uniqueness, conti- nuation, and a blow-up alternative for regular mild solutions to the problem. Additionally, we apply this theory to the Navier-Stokes equations with hereditary viscosity, taking initial data in the scale of fractional power spaces associated with the Stokes operator. We also explore reaction-diffusion problems with memory, considering the effects of super-linear and gradient-type nonlinearities, and initial data in Lebesgue and Besov spaces, respectively.\\
		
		\
		
		\noindent{\bf MSC 2020:}  35R09; 35B33; 35B60; 35B65; 35Q35.\\
		
		\noindent{\bf Keywords:} Integrodifferential equations;  Critical nonlinearities;  Local well-posedness; Continuation and blow-up alternative; Regularity theory; Navier-Stokes with hereditary viscosity; Reaction-diffusion with memory.
		
	\end{abstract}

		\maketitle
	

\section{Introduction}

We start this section describing two mathematical models of evolutionary phenomena that motivated us to develop this work. Let $\Omega \subset \R^3$ and consider the Navier-Stokes equation with hereditary viscosity
\begin{equation}\label{ins}\left\{
\begin{array}{lll}
	u_t = \dint_0^t g(t-s)\Delta u(x,s)ds -u\cdot \nabla u -\nabla p + h, \ (x,t) \in \Omega\times(0,\infty), \\
	\mathrm{div}(u)=0 \quad \textrm{in } \Omega\times (0,\infty), \\
	u(x,t)=0, \quad (x,t) \in \partial \Omega\times (0,\infty), \\
	u(x,0)=u_0(x), \quad x \in \Omega.
\end{array}\right.
\end{equation}
Here, $u:\Omega \times (0,\infty) \to \R^3$ describes the velocity field of a fluid contained in $\Omega$, $p$ is the fluid pressure, $h$ is an external force, $u_0:\Omega \to \R^3$ is the velocity field at the initial time $t=0$, and the kernel $g:(0,\infty)\to \R$ represents a material function. This equation arises in the dynamics of non-Newtonian fluids and also as viscoelastic model for the dynamics of turbulence statistics in Newtonian fluids, see \cite{barbu2003navier}. We note that  $g$ plays a central role in the study of \eqref{ins} since homogeneous isotropic incompressible linear materials are described utilizing such a material function, see \cite[Cap. 5]{pruss2013evolutionary}. To the best of our knowledge, the mathematical approach of the  Navier-Stokes equation with hereditary viscosity was considered for the first time in \cite{barbu2003navier}, where using a suitable $m$-accretive quantization of the nonlinear term, Barbu and Sritharan prove a local solvability theorem to the problem in the $L^2(\mathbb{R}^3)$-setting, and a finite speed of propagation property of the vorticity field. Recently, de Andrade, Silva and Viana \cite{andrade2021viana2021silva} consider the above problem to power-type materials, that is, taking
$$1\ast g(t):=\int_{0}^{t} g(s)ds=\frac{t^\alpha}{\Gamma(\alpha+1)},\quad t>0,$$
where $\Gamma$ is the Gamma function and  $\alpha\in[0,1)$. Using complex interpolation and Laplace transform methods, they developed local well-posedness and spatial regularity results to the problem in Lebesgue spaces in the context of a bounded smooth domain $\Omega\subset\mathbb{R}^N$. Particularly, they highlighted how the material function aﬀects the spatial regularity of the mild solutions, namely the solutions have more spatial regularity when $\alpha$ is near to $0$. In \cite{deACD}, de Andrade, Cuevas and Dantas consider \eqref{ins}  with $\Omega=\mathbb{R}^{N}$ ensuring sufficient conditions to the existence and asymptotic stability of mild solutions taking initial data in homogeneous Besov-Morrey spaces. Their analysis is strongly based on a subordination principle with the heat semigroup.

The second motivational problem concerns diffusion phenomena. To problems of heat flow in homogeneous isotropic rigid heat conductors $\Omega\subset \mathbb{R}^N$ subject to hereditary memory, Gurtin and Pipkin \cite{gurtin1968general} proposed to replace the Fourier's constitutive law for the heat flux,
$$q(x,t)=-c \nabla u(x,t),$$
which leads to the classical heat equation, by
\begin{equation*}
	q(x,t)= -\int_0^\infty g(t-s)\nabla u(x,s) ds.
\end{equation*}
This model for the constitutive law for the flux has been well accepted in modeling heat conduction phenomena in materials with memory, see  \cite{monica2014reaction,hristov2016transient,maccamy1977integro,miller1978integrodifferential,nunziato71onheat}, and leads us to equations of the form
\begin{equation}\label{iheat}
	u_t(x,t) =\dint_0^t g(t-s)\Delta u(x,s)\,ds+ f(t, u),\quad (x,t)\in \Omega\times(0, \infty).
\end{equation}
The notion of $C_0$-semigroups can not be used to describe the solutions of \eqref{iheat}, requiring more care in the treatment of such a problem.  Pr\"uss \cite{pruss2013evolutionary} developed an abstract theory of resolvent families that can be applied to \eqref{iheat}. Using that theory, several researchers consider particular situations of \eqref{iheat}, or similar problems, in the most diverse contexts, see  \cite{ andrade2020regularity, andradevianaZAMP, andrade2020gioviana2020viana,Keyantuo2006maximal, lunardi1988, mainini2009, pruss2009, viana2019local, vianaNODEA}. With respect to the nonlinear term of \eqref{iheat}, many publications show that there has been considerable interest in heat conduction in materials with or without memory involving a nonlinear term of gradient or super-linear type. Without being exhaustive, we mention  \cite{almeida2015viana,brezis1996nonlinear,  chen2014semilinear, chipot1989some,  gilding2005cauchy, kaltenbacher2019jordan, loayza2006heat,snoussi2001asymptotically, souplet1996finite, weissler1980,zhang2015blow} and references therein.

From a mathematical point of view, the above problems are very similar in the sense that they can be reformulated as an abstract integrodifferential equation of the form
\begin{equation}\label{abstract equation in introduction}\left\{
	\begin{array}{lll}
	u'(t)=\displaystyle\int_0^tg(t-s)Au(s)ds+f(t,u(t)), \quad t\ge0,
	\\
	u(0)=u_0,
\end{array}\right.
\end{equation}
where
%
$A:\mathcal{D}(A)\subset X_0 \to X_0$ is a closed, unbounded, and densely defined linear operator on a complex Banach space $X_0$ such that $-A$ is sectorial, $g \in L^1_{loc}([0,\infty);\mathbb{C})$ is a non identically zero Laplace transformable function and the nonlinearity $f$ is a suitable function defined in an interpolation scale $\{X_\alpha\}_{\alpha \geq 0}$  associated with $-A$.  For this reason, we will further investigate the correspondent well-posedness and spatial regularity theory to \eqref{abstract equation in introduction} and apply the abstract results to concrete problems; as examples, we study the Navier-Stokes equation with hereditary viscosity in the $L^q$-setting, and the heat equation, derived from the Gurtin-Pipkin law, with a super-linear type nonlinear term in Lebesgue spaces. Additionally, we investigate the heat equation in materials with memory and a gradient type nonlinearity in Besov spaces. To provide a more detailed analysis of our results, we compare them with known results for these equations; moreover, we draw a comparison between our results and previous properties related to some classical problems, this includes the work by Fujita and Kato \cite{fujita1963navier} on the Navier-Stokes equation, the research by Brezis and Cazenave \cite{brezis1996nonlinear} concerning the heat equation, the study by Arrieta and Carvalho \cite{arrieta2000abstract} on abstract semilinear parabolic problems, and the paper by Ben-Artzi, Souplet and Weissler \cite{BSW} on the viscous Hamilton-Jabobi equation.

Going further, as we know, an efficient approach to partial differential equations is closely related to the concept of solutions  to be adopted. With this fact in mind, we are particularly interested in the theory of mild solutions to  \eqref{abstract equation in introduction}, that is,  continuous functions $u:[0,\tau]\to X_1$ verifying
$$u(t)=S(t)u_0 + \int_0^t S(t-s)f(s,u(s))ds, \quad t \in [0,\tau],$$
where $\{S(t)\}_{t \geq 0}$ is the resolvent family associated with the pair $(A,g)$, see Section \ref{section 1} for details. Roughly speaking, our approach of \eqref{abstract equation in introduction} follows the spirit of semilinear parabolic problems within interpolation scales. Indeed, to obtain our results on the integrodifferential problem \eqref{abstract equation in introduction}, we perform the following strategy:
\begin{itemize}
	\item[a)] First, we analyze the behavior of $\{S(t)\}_{t \geq 0}$ in abstract interpolation scales associated with the sectorial operator $-A$.
	\item[b)] Next, we explore these abstract interpolation scales and their connections with some well-established spaces, such as Lebesgue, Besov, and Bessel potential spaces.
	\item[c)] Finally, we examine the nonlinear term $f$ present in \eqref{abstract equation in introduction} within these scales of spaces.
\end{itemize}
Hence, in the next section, we provide a detailed study of the  resolvent family $\{S(t)\}_{t \geq 0}$. Firstly, assuming that $-A$ is a sectorial operator, we provide sufficient conditions on the kernel $g$ to ensure that $\{S(t)\}_{t \geq 0}$ admits analytic extension to a  suitable sector of the complex plane, see Theorem \ref{generation + regularity}.  Also, using sharp interpolations arguments,  we analyze the behavior of $\{S(t)\}_{t \geq 0}$ in $\{X_\alpha\}_{\alpha \geq 0}$. Indeed, assuming the existence of a constant $\zeta_g>1$ such that
$$1/\widehat{g}(\lambda)=O(\lambda^{\zeta_g-1}), \quad \textrm{as }|\lambda|\to \infty,$$
we prove that for any $\tau>0$ and $0\leq \theta\leq\gamma\leq1$, there exists $M>0$ such that
$$\|S(t)x\|_{X_{1+\theta}}\leq Mt^{-\zeta_g(1+\theta-\gamma)}\|x\|_{X_{\gamma}}, \quad t\in(0,\tau],$$
see Theorem \ref{Theorem S from gamma to 1+theta}. 

This smoothing effect is fundamental in our analysis of \eqref{abstract equation in introduction}. In fact, the usual idea to ensure existence of a mild solution is based on the Banach fixed point theorem and consists of showing that the map
$$
T(\psi)(t)=S(t)u_0+\int_0^t S(t-s)f(s,\psi(s))ds,\,\, t \in [0,\tau],$$
is a strict contraction on a suitable complete metric space. If we assume that there exists $\gamma\ge 0$ such that for each $t>0$ the function $f(t,\cdot):X_{1} \to X_{\gamma}$ satisfies 
\begin{equation}\label{fmotiv}
	\|f(t,x)-f(t,y)\|_{X_{\gamma}}\leq c(r)\|x-y\|_{X_{1}}
\end{equation}
for all $x,y \in X_{1}$ with $\|x\|_{X_1}\le r$ and $\|y\|_{X_1}\le r$, then we eventually get integrals as
$$\int_0^t(t-s)^{-\zeta_g(1-\gamma)}ds,$$
whose convergence is equivalent to condition $\zeta_g<\frac{1}{1-\gamma}$.  That is, the technique fails when 
\begin{equation}\label{indicecritico}
	\gamma\le 1-\frac{1}{\zeta_g}.
\end{equation} 
This argumentation underscores that $1-\frac{1}{\zeta_g}$ plays a critical role in the study of well-posedness for problem \eqref{abstract equation in introduction}. Consequently, it seems reasonable to divide the analysis of that problem into two parts according to this value. 


In Section 3, firstly, we treat case $\gamma > 1-\frac{1}{\zeta_g}$ considering initial datum $u_0\in X_1$ and $f(t,\cdot):X_{1} \to X_{\gamma}$ a locally Lipschitz function. We ensure the existence of a unique mild solution $u(\cdot\ ;u_0)\in C([0,\tau ], X_1)$ to \eqref{abstract equation in introduction}, which depends continuously on initial data and can be continued to a maximal time of existence $\tau_{max}>0$ such that
$$\limsup_{t\rightarrow \tau_{max}^{-}}\|u(t;u_0)\|_{X_{1+\theta}}=+\infty,$$
if $\tau_{max}<+\infty$. The fractional powers scale has proven to be an effective tool for investigating the issue of regularity. These spaces offer a precise method for measuring spatial regularity of  solutions. Utilizing them, we demonstrate that the mild solution of \eqref{abstract equation in introduction} exhibits an immediate regularization effect. Indeed, for all $\theta \in (0, \gamma-1+1/\zeta_g)$, we have that
$$u(\cdot\ ;u_0)\in C((0,\tau ],X_{1+\theta}),$$
and, if $J\subset X_1$ is compact, then
$$\lim_{t\to0^{+}}t^{\zeta_g \theta}\sup_{u_0\in J}\|u(t;u_0)\|_{X_{1+\theta}}= 0.$$
Given that $X_{1+\theta}\hookrightarrow X_1$, it follows that $X_{1+\theta}$ is a more regular space than $X_1$. 

Next, we deal with the complementary situation, which will be called the critical case. As we discussed earlier, assuming only that $f(t,\cdot):X_{1} \to X_{\gamma}$  is a locally Lipschitz function, for some $\gamma\le 1-\frac{1}{\zeta_g}$, {it seems to be impossible} to ensure that problem \eqref{abstract equation in introduction} is well-posed in general. To overcome this situation, we use the notion of $\varep$-regular map introduced by Arrieta and Carvalho \cite{arrieta2000abstract}. Basically, taking initial datum $u_0\in X_1$ and assuming the existence of $\varep, \gamma(\varep) \in (0,1)$ such that $f(t,\cdot):X_{1+\varep}\to X_{\gamma(\varep)}$ is well-defined, for each $t>0$, and is a locally Lipschitz function, we obtain local existence, continuous dependence on initial data, continuation, and a blow-up alternative for $\varep$-regular mild solutions of \eqref{abstract equation in introduction}. Also, we prove uniqueness of such solutions in the class of functions with behavior at $t=0$ satisfying the condition\footnote{Conditions of type \eqref{eqws} are usual in integrodifferential equations and arise to address the lack of the semigroup property, see {e.g.} \cite{andrade2016integrodifferential, de2017abstract}.}
\begin{equation}\label{eqws}
	\lim_{t \to 0^+} t^{\zeta_g\varep}\|u(t;u_0)\|_{X_{1+\varep}}=0.
\end{equation} 
Here, $\varep$-regular mild solutions $u(\cdot\ ; u_0)$ are understood as mild solutions that get immediate regularization, in the sense that $u(\cdot\ ; u_0)\in C((0,\tau];X_{1+\varep})$, see Subsection 3.2 for details.  It is important to note that the material function affects the spatial regularity of solutions; specifically, these solutions exhibit greater spatial regularity when $\zeta_g$ is close to $1$, see Theorems \ref{thmsubcritico} and \ref{theorem critic case well-posedness}. This generalizes the particular situation of the Navier-Stokes equation with hereditary viscosity in the context of power-type materials studied in \cite{andrade2021viana2021silva}. To the best of our knowledge, viewed as a general result, this fact is presented for the first
time in this work. 

In Section 4, we apply our abstract results to concrete problems. In such applications, we consider a general scalar kernel of the form
\begin{equation*}
	g(t)=\sum_{i=1}^n k_it^{\alpha_i -1}e^{c_i t}, \quad t>0,
\end{equation*}
where $k_i>0, \alpha_i>0, c_i \in \mathbb{R}.$ We mention that many types of materials, as {Hookean solids}, {Maxwell fluids},  {Poynting-Thompson solids}, and {power-type materials}, are described by this kind of material functions, see \cite{pruss2013evolutionary}. In the Appendix, we demonstrate that these functions satisfy the hypotheses of our theorems when considering certain specific sectorial operators.

\section{Resolvent families and elements of abstract interpolation theory}\sectionmark{Resolvent and interpolation}\label{section 1}

We start this section introducing the notion of {resolvent family} associated with $(A,g)$, which is based on \cite{pruss2013evolutionary}.
\begin{de}[Resolvent family]
	A family $\{S(t)\}_{t \geq 0}$ of bounded linear operators in $X_0$ will be called a {resolvent family} associated with the pair $(A,g)$ if the following conditions are satisfied:
	\begin{itemize}
		\item $S(t)$ is strongly continuous on $[0,\infty)$ and $S(0)=I$;
		\item for each $t\geq 0$ and $x \in \mathcal{D}(A)$, we have $S(t)x \in \mathcal{D}(A)$  and $AS(t)x=S(t)Ax$;
		\item the {resolvent equation}
		\begin{equation}\label{pqo}
			S(t)x=x+\int_0^t\int_0^{t-s}g(t-s-r)drAS(s)xds
		\end{equation}
		holds for all $x \in \mathcal{D}(A)$ and $t \geq 0.$
	\end{itemize}
\end{de}
%
%

We denote by $\Sigma[\omega,\eta]$ the closed sector with vertex $\omega \in \mathbb{R}$ and opening angle $2\eta \in [0,2\pi]$ in the complex plane which is symmetric with respect to the real positive axis, that is,
$$\Sigma[\omega,\eta]:=\{ \omega+re^{i\psi};\, r\geq0, -\eta\leq\psi\leq\eta\}.$$
Further, if $\eta \in (0,\pi)$, we denote $\Sigma(\omega,\eta):=\mathrm{int}\left(\Sigma[\omega,\eta]\right)$, that is,
$$\Sigma(\omega,\eta):=\{ \omega+re^{i\psi};\, r>0, -\eta<\psi<\eta\}.$$
\begin{de}[Analytic resolvent family]
	A resolvent family $\{S(t)\}_{t \geq 0}$ is called {analytic}, if the function $S(\cdot):(0,\infty) \to \mathcal{B}(X_0)$ admits analytic extension to a sector $\Sigma(0,\eta_0)$ for some $0<\eta_0\leq \pi/2$. If that is the case, given $\omega_0\geq 0$, such family is said to be {of analyticity of type $(\omega_0,\eta_0)$} if for each $\omega>\omega_0$ and $\eta \in (0,\eta_0)$ there is $M=M(\omega,\eta)>1$ such that
	$$\|S(z)\|_{\mathcal{B}(X_0)} \leq M e^{\omega \Re(z)}, \quad z \in \Sigma(0, \eta).$$
\end{de}

\begin{de}[Sectorial operator]
	Let $B:\mathcal{D}(B)\subset X_0 \to X_0$ be a closed and unbounded linear operator with dense domain $\mathcal{D}(B)$ and $\psi_0 \in [0,\pi/2)$. We will say that $B$ is {sectorial of angle $\psi_0$}, or simply {sectorial}, if
	\begin{itemize}
		\item the spectrum of $B$, denoted by $\sigma(B)$, is contained in $ \Sigma[0,\psi_0]$;
		\item for each $\psi_1 \in (\psi_0,\pi)$, there is $C=C(\psi_1)$ such that
		$$\left\|(z-B)^{-1}\right\|_{\mathcal{B}(X_0)} \leq C/|z|, \quad z \in \mathbb{C}\backslash\Sigma[0,\psi_1].$$
	\end{itemize}
\end{de}


The next result provides sufficient conditions on the kernel $g$ for the existence of an analytic resolvent family $\{S(t)\}_{t \geq 0}$ associated with the pair $(A,g)$ under the hypothesis of a sectorial operator $-A$. 

\begin{theorem}\label{generation + regularity}
	Let $A:\mathcal{D}(A)\subset X_0 \to X_0$ such that $-A$ is a sectorial operator of angle $\psi_0 \in [0,\pi/2)$ and $g \in L^1_{loc}([0,\infty);\mathbb{C})$ a non identically zero Laplace transformable function. Suppose, for some $\omega_0 \geq 0$ and $\eta_0 \in (0,\pi/2]$, the following conditions hold: \begin{itemize}
		\item[\textrm{(B1)}] $\widehat{g}(\cdot)$ admits meromorphic extension to $\Sigma(\omega_0,\eta_0+\pi/2)$ and $\widehat{g}(\lambda)\neq 0$ for all $\lambda \in \Sigma(\omega_0,\eta_0+\pi/2)$;
		\item[\textrm{(B2)}] For each $\omega_1>\omega_0$ and $\eta_1 \in (0,\eta_0)$, there exists $\psi_1 \in (\psi_0,\pi/2)$ such that
		$$-\lambda / \widehat{g}(\lambda) \in\mathbb{C}\backslash\Sigma[0,\psi_1], \quad \textrm{for all }\lambda \in \Sigma(\omega_1,\eta_1+\pi/2);$$
		\item[\textrm{(B3)}] For some $\omega>\omega_0$ and $\eta \in (0,\eta_0)$, there is $\zeta_g>1$ such that
		$$\limsup_{\substack{|\lambda|\to\infty, \\ \lambda \in \Sigma(\omega,\eta+\pi/2)}}\frac{1}{|\widehat{g}(\lambda)||\lambda|^{\zeta_g-1}}<\infty.$$
	\end{itemize}
	Then, there exists the analytic resolvent family $\{S(t)\}_{t \geq 0}$ associated with the pair $(A,g)$. Furthermore, there is $M\geq 1$ such that
	$$\|S(t)\|_{\mathcal{B}(X_0)} \leq Me^{\omega t}, \quad t \geq 0,$$
	$$\|S(t)\|_{\mathcal{B}(X_0,X_1)} \leq Me^{\omega t}(1+t^{-\zeta_g}), \quad t > 0,$$
	and 
    $$	\|S(t_1)-S(t_0)\|_{\mathcal{B}(X_0)}\leq \frac{Me^{\omega(1+\sin(\eta))t_1}}{\sin(\eta)}\ln\left(t_1/t_0\right), \quad 0<t_0<t_1,$$
	where $X_1=\mathcal{D}(A)$.
\end{theorem}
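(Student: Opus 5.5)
The existence of the analytic resolvent family will come from Prüss' generation theorem for analytic resolvents (\cite[Theorem 2.1]{pruss2013evolutionary}). For that I will define, for $\lambda\in\Sigma(\omega_0,\eta_0+\pi/2)$,
$$H(\lambda):=\frac{1}{\lambda}\Bigl(I-\widehat{g}(\lambda)A\Bigr)^{-1}=\frac{1}{\widehat{g}(\lambda)\lambda}\Bigl(\tfrac{1}{\widehat{g}(\lambda)}-A\Bigr)^{-1}.$$
Taking the Laplace transform of the resolvent equation \eqref{pqo} shows that $\widehat{S}(\lambda)=H(\lambda)$ whenever $S$ exists. By (B1) the quantity $\widehat g(\lambda)$ is nonzero; more precisely I use the form $(\lambda/\widehat g(\lambda) - A)^{-1}\cdot \widehat g(\lambda)^{-1}$. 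By (B2), $-\lambda/\widehat g(\lambda)$ lies outside $\Sigma[0,\psi_1]$ with $\psi_1>\psi_0$, and therefore sectoriality of $-A$ gives
$$H(\lambda)=\frac{1}{\widehat g(\lambda)}\Bigl(\frac{\lambda}{\widehat g(\lambda)}-A\Bigr)^{-1},\qquad \|H(\lambda)\|\le \frac{C}{|\widehat g(\lambda)|}\cdot\frac{|\widehat g(\lambda)|}{|\lambda|}=\frac{C}{|\lambda|}.$$
This holds uniformly on every smaller sector $\Sigma(\omega_1,\eta_1+\pi/2)$. The map $H$ is analytic there because the resolvent is analytic and the poles of $\widehat g$ are removable: at a pole, $\widehat g(\lambda)(I-\widehat g A)^{-1}$ extends, since $1/\widehat g\to0$. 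The estimate $\|\lambda H(\lambda)\|\le C$ on such sectors is exactly the hypothesis of Prüss' theorem. It yields an analytic resolvent of type $(\omega_0,\eta_0)$, and in particular $\|S(t)\|\le Me^{\omega t}$.

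For the $X_1$ estimate I will use the Cauchy representation
$$S(t)=\frac{1}{2\pi i}\int_{\Gamma} e^{\lambda t}H(\lambda)\,d\lambda,$$
where $\Gamma$ is the boundary of $\Sigma(\omega,\eta+\pi/2)$, deformed near the vertex by an arc of radius $1/t$ when $t$ is small. Applying $A$ gives
$$AH(\lambda)=\frac{1}{\widehat g(\lambda)}\Bigl(\frac{\lambda}{\widehat g}\Bigl(\frac{\lambda}{\widehat g}-A\Bigr)^{-1}-I\Bigr),$$
so $\|AH(\lambda)\|\le C/|\widehat g(\lambda)|\le C|\lambda|^{\zeta_g-1}$ for large $|\lambda|$ by (B3), and it is bounded on compact parts of $\Gamma$. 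On the rays $\lambda=\omega+re^{\pm i(\eta+\pi/2)}$ we have $|e^{\lambda t}|=e^{\omega t}e^{-rt\sin\eta}$. The integral is therefore bounded by
$$e^{\omega t}\Bigl(C+\int_1^\infty r^{\zeta_g-1}e^{-rt\sin\eta}\,dr\Bigr)\le Ce^{\omega t}\bigl(1+t^{-\zeta_g}\bigr).$$
Together with the $X_0$ bound this gives the graph-norm estimate $\|S(t)\|_{\mathcal B(X_0,X_1)}\le Me^{\omega t}(1+t^{-\zeta_g})$. The main subtlety is justifying that $A$ commutes with the integral; I will handle it by closedness of $A$, since the integrand decays exponentially in $r$ for fixed $t>0$.

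For the Lipschitz-in-log estimate I write
$$S(t_1)-S(t_0)=\int_{t_0}^{t_1}S'(s)\,ds,$$
and estimate $S'(s)$ by differentiating the contour integral. This gives $\|S'(s)\|\le \frac{1}{2\pi}\int_\Gamma|\lambda||e^{\lambda s}|\,\|H(\lambda)\|\,|d\lambda|$, with $\|\lambda H\|\le C$. On the rays this is bounded by
$$Ce^{\omega s}\int_0^\infty e^{-rs\sin\eta}\,dr=\frac{Ce^{\omega s}}{s\sin\eta}.$$
The vertex/arc contribution is handled by choosing the arc radius comparable to $1/s$, which gives $e^{\omega s}e^{\sin\eta}$-type factors; that is the source of the exponent $\omega(1+\sin\eta)t_1$ after bounding $e^{\omega s}\le e^{\omega t_1}$. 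Integrating $1/s$ from $t_0$ to $t_1$ gives $\ln(t_1/t_0)$.

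I expect the main obstacle to be the careful bookkeeping of the contour near the vertex $\omega$, together with the removable-singularity and uniformity argument for $H$ where $\widehat g$ has poles. The decay estimates themselves are routine Laplace-inversion computations.
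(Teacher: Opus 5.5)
Your argument is correct. The existence step and the $X_0$ bound match the paper exactly: Pr\"uss's Theorem 2.1, fed by the estimate $\|\widehat g(\lambda)^{-1}(\lambda/\widehat g(\lambda)-A)^{-1}\|\le C/|\lambda|$ coming from (B2) and sectoriality. The paper only adds the trivial conversion to $C(1+1/\cos\eta_1)/|\lambda-\omega_1|$, which is the literal form of Pr\"uss's hypothesis.

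For the other two estimates the paper cites rather than computes. It turns (B3) into the global bound $|\lambda/\widehat g(\lambda)|\le c(|\lambda-\omega|^{\zeta_g}+1)$ on $\Sigma(\omega,\eta+\pi/2)$, using asymptotics for $|\lambda|>R_0$ and compactness for $|\lambda|\le R_0$. It then invokes Pr\"uss's Theorem 2.2 for $\|AS(t)\|$, and takes the logarithmic bound from Pr\"uss's Corollary 2.1.

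\emph{The $X_1$ bound.} Your ray estimate $\|AH(\lambda)\|\le (C+1)/|\widehat g(\lambda)|\lesssim (1+r)^{\zeta_g-1}$ is essentially the proof of that Theorem 2.2, so here you are unpacking the same argument. Your ``bounded on compact parts of $\Gamma$'' step plays the role of the paper's compactness step. Note that (B3) is stated on the open sector; to use it for large $|\lambda|$ on the boundary rays you need continuity of $1/\widehat g$, which is harmless.

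\emph{The logarithmic bound.} Here the mechanisms genuinely differ. In the paper, $e^{\omega(1+\sin\eta)t_1}/\sin\eta$ comes from Cauchy's estimate for $S'(s)$ on the disc $|z-s|\le s\sin\eta$ inside the sector of analyticity, where $\Re z\le (1+\sin\eta)s$. That, not a vertex arc, is its source, and it uses only $\|S(z)\|\le Me^{\omega\Re z}$. Your differentiation of the inversion integral uses $\|\lambda H(\lambda)\|\le C$ instead, which holds along the whole contour, vertex included. So no arc is needed for either estimate, and you get $\|S'(s)\|\le Ce^{\omega s}/(s\sin\eta)$ directly. This is slightly sharper and implies the stated bound since $\omega>0$. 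The cost is justifying the representation $S(t)=\frac{1}{2\pi i}\int_\Gamma e^{\lambda t}H(\lambda)\,d\lambda$ and differentiation under the integral sign, both routine.

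One slip to fix: your first displayed definition $H(\lambda)=\lambda^{-1}(I-\widehat g(\lambda)A)^{-1}$ is not $\widehat S(\lambda)$ for this equation. Pr\"uss's $H$ is $\lambda^{-1}(I-\widehat a(\lambda)A)^{-1}$ for the kernel $a=1\ast g$. Since $\widehat a(\lambda)=\widehat g(\lambda)/\lambda$, this gives $H(\lambda)=(\lambda-\widehat g(\lambda)A)^{-1}=\widehat g(\lambda)^{-1}(\lambda/\widehat g(\lambda)-A)^{-1}$. You switch to this correct form at once and every estimate uses it, so nothing downstream breaks, but the removable-singularity sentence is written with the wrong expression. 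That discussion is moot anyway. The bound $\|H(\lambda)\|\le C/|\lambda|$ near a pole already gives removability by Riemann's theorem. In fact (B2) excludes poles of $\widehat g$ from the sector: near a pole, $-\lambda/\widehat g(\lambda)$ takes values in every direction close to $0$, hence some lie in $\Sigma[0,\psi_1]$.
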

	\begin{proof}
For $\omega_1>\omega_0$ and $\eta_1 \in (0,\eta_0)$ fixed, let $\psi_1 \in (\psi_0,\pi/2)$ as in (B2). From the sectoriallity of $-A$, there is $C=C(\psi_1)$ such that
		$$\|(z+A)^{-1}\|_{\mathcal{B}(X_0)}\leq C/|z|, \quad z \in \mathbb{C}\backslash\Sigma[0,\psi_1],$$
		from whence, for $\lambda \in \Sigma(\omega_1,\eta_1+\pi/2)$, we have
		\begin{align*}
			\left\|\frac{1}{\widehat{g}(\lambda)}\left(\frac{\lambda}{\widehat{g}(\lambda)}-A\right)^{-1}\right\|_{\mathcal{B}(X_0)}\leq C/|\lambda|\leq  \left(\frac{C}{|\lambda-\omega_1|}\right)\left(1+\omega_1/|\lambda|\right)\leq \frac{C_1}{|\lambda-\omega_1|},
		\end{align*}
		where $C_1 = C(1+1/\cos(\eta_1))$. Then, from  \cite[Theorem 2.1]{pruss2013evolutionary} there exists an analytic resolvent family $\{S(t)\}_{t \geq 0}$ of type $(\omega_0,\eta_0)$ associated with $(A,g)$. In particular, for the constants $\omega>\omega_0$ and $\eta \in (0,\eta_0)$ from (B3), there exists $M=M(\omega,\eta)\geq1$ such that
		\begin{equation*}\label{qpl}
			\|S(t)\|_{\mathcal{B}(X_0)}\leq Me^{\omega t}, \quad
			t \geq 0.
		\end{equation*}
		
		As for the spatial regularity, for each $R>0$, let us denote
		$$\varSigma(R)=\{ \lambda \in \Sigma(\omega,\eta+\pi/2);\, |\lambda|\leq R\}, \quad \varPi(R)=\{ \lambda \in \Sigma(\omega,\eta+\pi/2);\, |\lambda|> R\}$$
		and
		$$m(R)=\sup_{\lambda \in \Pi(R)} \dfrac{|\lambda|}{|\widehat{g}(\lambda)||\lambda-\omega|^{\zeta_g}}.$$
		Since $|\lambda|/|\lambda-\omega| \approx 1$ for $\lambda \in \varPi(R)$ and large $R$, we have
		\begin{align*}
			\lim_{R\to \infty} m(R) = \lim_{R \to \infty} \sup_{\lambda \in \varPi(R)}  \dfrac{1}{|\widehat{g}(\lambda)||\lambda|^{\zeta_g-1}}.
		\end{align*}
		Then, (B3) can be rewritten as $L := \lim_{R \to \infty} m(R) < \infty$. Let $R_0>0$ be large enough so that $m(R_0)\leq L+1$. Then,
		\begin{equation}\label{qxz}
			|\lambda/\widehat{g}(\lambda)| \leq (L+1)|\lambda-\omega|^{\zeta_g}, \quad \textrm{for all }\lambda \in \varPi(R_0).
		\end{equation}
		Since, by (B1), $\widehat{g}(\cdot)$ is a non-vanishing meromorphic function on $\Sigma(\omega_0,\eta_0+\pi/2)$, the map $\lambda\mapsto\lambda/\widehat{g}(\lambda)$ is continuous (even analytic) and then
		\begin{equation}\label{qxc}
			\mu:=\max\left\lbrace|\lambda/\widehat{g}(\lambda)|\,;\, \lambda \in \overline{\varSigma(R_0)}\right\rbrace<\infty.
		\end{equation}
		From \eqref{qxz} and \eqref{qxc}, we have
		$$|\lambda/\widehat{g}(\lambda)|\leq c \left(|\lambda-\omega|^{\zeta_g}+1\right), \quad \textrm{for all }\lambda \in \Sigma(\omega,\eta+\pi/2),$$
		where $c=\max\{L+1,\mu\}$. By  \cite[Theorem 2.2]{pruss2013evolutionary}, there exists $c_0>0$ such that
		$$\|AS(t)\|_{\mathcal{B}(X_0)} \leq c_0 e^{\omega t}\left(1+t^{-\zeta_g}\right), \quad t >0.$$
		Therefore, for $t>0$,
		\begin{align*}
			\|S(t)\|_{\mathcal{B}(X_0,X_1)} &= \|(I-A)S(t)\|_{\mathcal{B}(X_0)} \\
			&\leq \|S(t)\|_{\mathcal{B}(X_0)}+\|AS(t)\|_{\mathcal{B}(X_0)} \\
			&\leq Me^{\omega t}+c_0e^{\omega t}(1+t^{-\zeta_g}) \\
			&\leq (M+c_0)e^{\omega t}(1+t^{-\zeta_g}).
		\end{align*}
		Adjusting the constants, we can rewrite
		$$\|S(t)\|_{\mathcal{B}(X_0)}\leq Me^{\omega t}, \quad t \geq 0,$$
		and
		$$\|S(t)\|_{\mathcal{B}(X_0,X_1)} \leq M e^{\omega t}\left(1+t^{-\zeta_g}\right), \quad t >0.$$

As a direct consequence of \cite[Corollary 2.1]{pruss2013evolutionary}, we have
\begin{equation*}
	\|S(t_1)-S(t_0)\|_{\mathcal{B}(X_0)}\leq \frac{Me^{\omega(1+\sin(\eta))t_1}}{\sin(\eta)}\ln\left(t_1/t_0\right), \quad 0<t_0<t_1,
\end{equation*}		
which concludes the proof.
	\end{proof}

\begin{remark}\label{aer}
	For the sake of simplicity, we can rewrite the inequalities obtained in the Theorem \ref{generation + regularity} as
\begin{equation}
	\|S(t)\|_{\mathcal{B}(X_0)} \leq M, \quad t \in [0,\tau],
\end{equation}
\begin{equation}
	\|S(t)\|_{\mathcal{B}(X_0,X_1)} \leq Mt^{-\zeta_g}, \quad t \in (0,\tau],
\end{equation}
\begin{equation}
	\|S(t_1)-S(t_0)\|_{\mathcal{B}(X_0)}\leq M\ln\left(t_1/t_0\right), \quad 0<t_0<t_1\leq \tau,
\end{equation}
where the new constant $M=M(\tau)$ is some suitably large constant according to the value of $\tau>0$.
\end{remark}

From now on, we study the behavior of resolvent families on abstract interpolation scales associated with sectorial operators. For this end, let $A:\mathcal{D}(A)\subset X_0 \to X_0$ such that $-A$ is a sectorial operator and, for $j \in \mathbb{N}$, let $X_j$ be the space $\mathcal{D}(A^j)$ equipped with the norm $\|x\|_{X_j}:=\|(I-A)^jx\|_{X_0}$, where, for $j \geq 2$, $\mathcal{D}(A^j)$ is the space defined inductively by
$$\mathcal{D}(A^j)=\{x \in \mathcal{D}(A^{j-1}); Ax \in \mathcal{D}(A^{j-1})\}.$$
%
%
By an inductive argument, we can prove that $X_j$ is a complex Banach space, for all $j \in \mathbb{N}_0:=\mathbb{N}\cup\{0\}$, and 
$$...\,\hookrightarrow X_{j+1}\hookrightarrow X_j \hookrightarrow \,...\,\hookrightarrow X_2 \hookrightarrow X_1\hookrightarrow X_0$$
densely. Moreover, $-A$, regarded as an operator in $X_j$ with domain $\mathcal{D}(A^{j+1})$, is a sectorial operator as well. Denote by $\{\mathcal{F}_\theta\}_{\theta \in (0,1)}$ either the real or complex family of interpolation functors, see \cite{adams2003sobolev}. For each $k \in \mathbb{N}_0$, we define a {scale of intermediate spaces between} $(X_k,X_{k+1})$ by
$$X_{\alpha}:=\mathcal{F}_{\alpha-k}(X_k,X_{k+1}),\quad \alpha \in (k,k+1),$$
By the reiteration theorems for the real and complex methods, we have the following {reiteration property}:
\begin{equation}\label{reiteration property}
	\mathcal{F}_{\theta}(X_{k+\beta},X_{k+\gamma})=\mathcal{F}_{(1-\theta)\beta+\theta\gamma}(X_k,X_{k+1})=X_{k+(1-\theta)\beta+\theta\gamma},
\end{equation}
for $ k \in \mathbb{N}_0$, $0\leq \beta \leq\gamma \leq 1$ and $\theta \in (0,1).$
Then, if $j,k \in \mathbb{N}_0$, $\beta=(1-\theta)\beta_0+\theta\beta_1$ and $\gamma=(1-\theta)\gamma_0+\theta\gamma_1$, where $0\leq \beta_0\leq \beta_1\leq 1$, $0\leq \gamma_0\leq \gamma_1\leq 1,$ and  $\theta \in (0,1)$,
then,
$$X_{j+\beta}=\mathcal{F}_\theta(X_{j+\beta_0},X_{j+\beta_1}) \textrm{ and }X_{k+\gamma}=\mathcal{F}_\theta(X_{k+\gamma_0},X_{k+\gamma_1}),$$
with equivalent norms. Therefore, the following {interpolation inequality property} holds:
\begin{equation}\label{interpolation inequality property}
	\|T\|_{\mathcal{B}(X_{j+\beta},X_{k+\gamma})}\leq c\|T\|_{\mathcal{B}(X_{j+\beta_0},X_{k+\gamma_0})}^{1-\theta}\|T\|_{\mathcal{B}(X_{j+\beta_1},X_{k+\gamma_1})}^\theta,
\end{equation}
for all $T \in \mathcal{B}(X_{j+\beta_1},X_{k+\gamma_1})\cap\mathcal{B}(X_{j+\beta_1},X_{k+\gamma_1}).$
\begin{pr}\label{estimatives integers}
	If $i,j \in \mathbb{N}_0$ and $T:X_i \to X_j$ is a bounded linear operator such that
	$$Ty \in \mathcal{D}(A) \textrm{ and }ATy=TAy,\quad  \textrm{for all } y \in X_{i+1},$$
	then
	$$\|T\|_{\mathcal{B}(X_{i+k},X_{j+k})}\leq \|T\|_{\mathcal{B}(X_i,X_j)}, \quad i,j,k \in \mathbb{N}_0.$$
	\begin{proof}
		Indeed, given $x \in X_{i+k}$, we have
		\begin{align*}
			\|Tx\|_{X_{j+k}} &= \|(I-A)^kTx\|_{X_j} \\
			&=\|T(I-A)^kx\|_{X_j}\\
			&\leq \|T\|_{\mathcal{B}(X_i,X_j)}\|(I-A)^kx\|_{X_i}\\
			&=\|T\|_{\mathcal{B}(X_i,X_j)}\|x\|_{X_{i+k}}.
		\end{align*}
		Then,
		$$\|T\|_{\mathcal{B}(X_{i+k},X_{j+k})}\leq \|T\|_{\mathcal{B}(X_i,X_j)}.$$
	\end{proof}
\end{pr}
\begin{lema}\label{lemma for estimative in scale}
For $\alpha \in [0,1]$ there is a constant $c>0$ such that, for each non zero bounded linear operator $T:X_0\to X_1$ with $ATx=TAx$, $x \in X_1$, we have
	\begin{itemize}
		\item[(1)] $\|T\|_{\mathcal{B}(X_{\alpha},X_1)}\leq c\|T\|_{\mathcal{B}(X_0,X_1)}^{1-\alpha}
		\|T\|_{\mathcal{B}(X_0)}^{\alpha};$
		\item[(2)] $\|T\|_{\mathcal{B}(X_{\alpha},X_{1+\alpha})}\leq c\|T\|_{\mathcal{B}(X_0,X_1)};$
		\item[(3)] $\|T\|_{\mathcal{B}(X_1,X_{1+\alpha})}\leq c\|T\|_{\mathcal{B}(X_0)}^{1-\alpha}\|T\|_{\mathcal{B}(X_0,X_1)}^{\alpha}$.
	\end{itemize}
	\begin{proof}
For $\alpha =0$ or $\alpha =1$ all inequalities follow from Proposition \ref{estimatives integers}. Then, we can suppose $\alpha \in (0,1)$. In this case, the inequalities follow from the interpolation inequality property \eqref{interpolation inequality property} and Proposition \ref{estimatives integers}.
		Indeed, for (1), since $X_\alpha=\mathcal{F}_{\alpha}(X_0,X_1)$ and $X_1=\mathcal{F}_{\alpha}(X_1,X_1)$, we have
		$$\|T\|_{\mathcal{B}(X_{\alpha},X_1)}\leq c\|T\|_{\mathcal{B}(X_0,X_1)}^{1-\alpha}\|T\|_{\mathcal{B}(X_1,X_1)}^{\alpha} \leq c\|T\|_{\mathcal{B}(X_0,X_1)}^{1-\alpha}\|T\|_{\mathcal{B}(X_0)}^{\alpha}.$$
		Analogously, for (2),
		$$\|T\|_{\mathcal{B}(X_{\alpha},X_{1+\alpha})}\leq \|T\|_{\mathcal{B}(X_0,X_1)}^{1-\alpha}\|T\|_{\mathcal{B}(X_1,X_2)}^{\alpha}\leq c \|T\|_{\mathcal{B}(X_0,X_1)}.$$
		As for (3),
		$$\|T\|_{\mathcal{B}(X_1,X_{1+\alpha})}\leq c\|T\|_{\mathcal{B}(X_1,X_1)}^{1-\alpha}
		\|T\|_{\mathcal{B}(X_1,X_2)}^{\alpha}\leq c\|T\|_{\mathcal{B}(X_0)}^{1-\alpha}\|T\|_{\mathcal{B}(X_0,X_1)}^{\alpha}.
		$$
	\end{proof}
\end{lema}
\begin{lema}\label{lemma T from gamma to 1+theta}
	Let $0\leq \theta\leq\gamma\leq1$. There exists a constant $c>0$ such that, for each non zero bounded linear operator $T:X_0\to X_1$ satisfying $ATx=TAx$, $x \in X_1$, we have
	$$\|T\|_{\mathcal{B}(X_{\gamma},X_{1+\theta})}\leq c\|T\|_{\mathcal{B}(X_0,X_1)}^{1+\theta-\gamma}
	\|T\|_{\mathcal{B}(X_0)}^{\gamma-\theta}$$
	\begin{proof}
		The cases $\theta=0$, $\theta=\gamma$ and $\gamma=1$ correspond to Items (1), (2) and (3) of Lemma \ref{lemma for estimative in scale}, respectively. Then, we can suppose $0<\theta<\gamma<1$. Since $X_{1+\theta}=\mathcal{F}_{\theta/\gamma}(X_1,X_{1+\gamma})$ (reiteration property \eqref{reiteration property}) and $X_{\gamma}=\mathcal{F}_{\theta/\gamma}(X_\gamma,X_\gamma)$, by the interpolation inequality property \eqref{interpolation inequality property}, we have
		$$\|T\|_{\mathcal{B}(X_{\gamma},X_{1+\theta})}
		\leq c\|T\|_{\mathcal{B}(X_{\gamma},X_1)}^{1-\theta/\gamma}	\|T\|_{\mathcal{B}(X_{\gamma},X_{1+\gamma})}^{\theta/\gamma}.
		$$
		Then, by Lemma \ref{lemma for estimative in scale},
		\begin{align*}
			\|T\|_{\mathcal{B}(X_{\gamma},X_{1+\theta})}&\leq c\|T\|_{\mathcal{B}(X_{\gamma},X_1)}^{1-\theta/\gamma}	\|T\|_{\mathcal{B}(X_{\gamma},X_{1+\gamma})}^{\theta/\gamma}\\
			&\leq c\left(\|T\|_{\mathcal{B}(X_0,X_1)}^{1-\gamma}\|T\|_{\mathcal{B}(X_1)}^{\gamma}\right)^{1-\theta/\gamma}
			\left(\|T\|_{\mathcal{B}(X_0,X_1)}^{1-\gamma}\|T\|_{\mathcal{B}(X_1,X_2)}^{\gamma}\right)^{\theta/\gamma}\\
			&=  c\|T\|_{\mathcal{B}(X_0,X_1)}^{1-\gamma}
			\|T\|_{\mathcal{B}(X_1)}^{\gamma-\theta}
			\|T\|_{\mathcal{B}(X_1,X_2)}^{\theta} \\
			&\leq c\|T\|_{\mathcal{B}(X_0,X_1)}^{1+\theta-\gamma}
			\|T\|_{\mathcal{B}(X_0)}^{\gamma-\theta}.
		\end{align*}
	\end{proof}
\end{lema}

Suppose the pair $(A,g)$ satisfies the conditions of Theorem \ref{generation + regularity}. From the previous results we can prove the following smoothing effect of the resolvent family associated with the pair $(A,g)$.

\begin{theorem}\label{Theorem S from gamma to 1+theta}
	Let $0\leq \theta\leq\gamma\leq1$ and $\tau_0>0$. Then, there is $M=M(\tau_0,\theta,\gamma)>0$ such that
	$$\|S(t)\|_{\mathcal{B}(X_{\gamma},X_{1+\theta})}\leq Mt^{-\zeta_g(1+\theta-\gamma)}, \quad t \in (0,\tau_0].$$
	\begin{proof}
		From Lemma \ref{lemma T from gamma to 1+theta}, up to a constant factor,
		$$\|S(t)\|_{\mathcal{B}(X_{\gamma},X_{1+\theta})}\leq \|S(t)\|_{\mathcal{B}(X_0,X_1)}^{1+\theta-\gamma}
		\|S(t)\|_{\mathcal{B}(X_0)}^{\gamma-\theta}.$$
		Hence, from Remark \ref{aer}, we have
		\begin{align*}
			\|S(t)\|_{\mathcal{B}(X_{\gamma},X_{1+\theta})}\leq (Mt^{-\zeta_g})^{1+\theta-\gamma}		M^{\gamma-\theta}=Mt^{-\zeta_g(1+\theta-\gamma)}.
		\end{align*}
	\end{proof}
\end{theorem}

\begin{remark}
The above result is compatible with some previous results. Indeed, taking formally $\zeta_g=1$, we obtain the regularization of the analytic semigroup generated by $A$. On the other hand, considering $g$ such that
$$1\ast g(t)=\frac{t^\alpha}{\Gamma(\alpha+1)}, \quad t>0,$$
where $\alpha\in[0,1)$,  we can take $\zeta_g=1+\alpha$, see the Appendix for details. In this case the resolvent family associated with the pair $(A,g)$ satisfies
$$\|S(t)\|_{\mathcal{B}(X_{\gamma},X_{1+\theta})}\leq Mt^{-(1+\alpha)(1+\theta-\gamma)}, \quad t \in (0,\tau_0].$$
This particular case was firstly treated by de Andrade, Viana and Silva in \cite{andrade2021viana2021silva}.
\end{remark}

\begin{theorem}\label{Theorem delta S from gamma to 1+theta}
	Let $0\leq \theta\leq\gamma\leq1$ and $\tau_0>0$. Then, there is $M=M(\tau_0,\theta,\gamma)>0$ such that
	\begin{equation*}
		\|S(t_1)-S(t_0)\|_{\mathcal{B}(X_{\gamma},X_{1+\theta})} \leq M \ln(t_1/t_0)^{\gamma-\theta} t_0^{-\zeta_g(1+\theta-\gamma)}, 
	\end{equation*}
whenever $0<t_0<t_1\leq \tau_0$.	In particular, if $0\leq \theta<\gamma\leq 1$, then $$S(\cdot):(0,\infty)\to \mathcal{B}(X_\gamma,X_{1+\theta})$$ continuously.
	\begin{proof}
		By Remark \ref{aer}, there is some generic constant $M=M(\tau_0)$ such that
		\begin{align*}
			\|S(t_1)-S(t_0)\|_{\mathcal{B}(X_0,X_1)}&\leq \|S(t_1)\|_{\mathcal{B}(X_0,X_1)}+\|S(t_0)\|_{\mathcal{B}(X_0,X_1)}\nonumber\\
			&\leq M\left(t_1^{-\zeta_g}+t_0^{-\zeta_g}\right)\\
			&\leq  Mt_0^{-\zeta_g}
		\end{align*}
		and
		\begin{equation*}
			\left\|S(t_1)-S(t_0)\right\|_{\mathcal{B}(X_0)}\leq M \ln(t_1/t_0).
		\end{equation*}
		Then, by Lemma \ref{lemma T from gamma to 1+theta}, we have
		\begin{align*}
			\|S(t_1)-S(t_0)\|_{\mathcal{B}(X_{\gamma},X_{1+\theta})}&\leq  \|S(t_1)-S(t_0)\|_{\mathcal{B}(X_0,X_1)}^{1+\theta-\gamma}
			\|S(t_1)-S(t_0)\|_{\mathcal{B}(X_0)}^{\gamma-\theta}\\
			&\leq M\left(t_0^{-\zeta_g}\right)^{1+\theta-\gamma}\left(\ln\left(t_1/t_0\right)\right)^{\gamma-\theta} \\
			&=M\ln(t_1/t_0)^{\gamma-\theta}t_0^{-\zeta_g(1+\theta-\gamma)}.
		\end{align*}
	\end{proof}
\end{theorem}
\begin{pr}\label{proposition S s.continuous and bounded growth in compacts}
	Let $\beta \in (0,1)$. Then, for any compact $J \subset X_1$, we have $$\lim_{t\to0^+}\left(t^{\zeta_g\beta} \sup_{x \in J}\|S(t)x\|_{X_{1+\beta}}\right)=0.$$
	\begin{proof}
		Let $J \subset X_1$ be a compact set and $\delta>0$. Since $X_{1+\beta}\hookrightarrow X_1$ densely, the collection $\{B_\delta(y)\}_{y \in X_{1+\beta}}$, where
		$$B_\delta(y)=\left\lbrace z \in X_1; \, \|y-z\|_{X_1}<\delta\right\rbrace,$$
		is an open cover for $J$. Then, there exist $y_1, y_2, ..., y_n \in X_{1+\beta}$ such that $J \subset \cup_{k=1}^n B_\delta(y_k).$ Let $N = \max_{1\leq k \leq n}\{\|y_k\|_{X_{1+\beta}}\}$. Now, given $x \in J$, let $j \in \{1,2,...,n\}$ such that $x \in B_\delta(y_j)$. Then,
		\begin{align*}
			\|S(t)x\|_{X_{1+\beta}}&\leq  \|S(t)(x-y_j)\|_{X_{1+\beta}}+\|S(t)y_j\|_{X_{1+\beta}} \\
			&\leq  \|S(t)\|_{\mathcal{B}(X_1,X_{1+\beta})}\|x-y_j\|_{X_1}+\|S(t)\|_{\mathcal{B}(X_{1+\beta},X_{1+\beta})}\|y_j\|_{X_{1+\beta}}\\
			&\leq  \|S(t)\|_{\mathcal{B}(X_1,X_{1+\beta})}\delta+\|S(t)\|_{\mathcal{B}(X_{1+\beta},X_{1+\beta})}N.
		\end{align*}
		Hence, by Theorem \ref{Theorem S from gamma to 1+theta},
		\begin{align*}
			\|S(t)x\|_{X_{1+\beta}} \leq Mt^{-\zeta_g\beta}\delta + MN.
		\end{align*}
		Therefore,
		$$t^{\zeta_g\beta}\sup_{x \in J}\|S(t)x\|_{X_{1+\beta}}\leq M\left(\delta+t^{\zeta_g\beta}N\right)$$
		and
		$$ \limsup_{t \to 0^+}\left(t^{\zeta_g\beta}\sup_{x \in J}\|S(t)x\|_{X_{1+\beta}}\right)\leq M\delta.$$
		Finally, we pass the limit $\delta \to 0^+$.
	\end{proof}
\end{pr}

%
%


\section{Local well-posedness}
\sectionmark{Local well-posedness}

This section contains our main results on the well-posedness of problem \eqref{abstract equation in introduction}. To prove them, we suppose the pair $(A,g)$ satisfies the conditions of Theorem \ref{generation + regularity}. We divide our analysis according to the behavior of the nonlinear term $f$ present in \eqref{abstract equation in introduction} on the abstract interpolation scale $\{X_\alpha\}_{\alpha \geq 0}$  associated with $-A$.   However, before proceeding, we need to set the sense of solution we are looking for. To this end,  consider $\{S(t)\}_{t \geq 0}$ the analytic resolvent family  associated with the pair $(A,g)$.  Applying formally the Laplace transform to the resolvent equation \eqref{pqo}, we get
$$\widehat{S}(\lambda)x=\lambda^{-1}x+\lambda^{-1}\widehat{g}(\lambda)A\widehat{S}(\lambda)x$$
and so
$$	\widehat{S}(\lambda)x=\frac{1}{\widehat{g}(\lambda)}\left(\frac{\lambda}{\widehat{g}(\lambda)}-A\right)^{-1}x,$$
for $x\in \mathcal{D}(A)$. On the other hand, applying the Laplace transform to \eqref{abstract equation in introduction} we obtain 
$$\lambda\widehat{u}(\lambda)=u_0+\widehat{g}(\lambda)A\widehat{u}(\lambda)+\widehat{F}(\lambda).$$
where $\widehat{F}(\lambda)$ represent the Laplace transform of the nonlinear term $f(\cdot,u)$. Reorganizing, we have
$$\widehat{u}(\lambda)=\frac{1}{\widehat{g}(\lambda)}\left(\frac{\lambda}{\widehat{g}(\lambda)}-A\right)^{-1}u_0+\frac{1}{\widehat{g}(\lambda)}\left(\frac{\lambda}{\widehat{g}(\lambda)}-A\right)^{-1}\widehat{F}(\lambda).$$
Therefore, the inverse Laplace transform yields
$$u(t)=S(t)u_0+S\ast f(t,u(t)),\quad t\ge0.$$

The above argumentation and the previous related literature suggest the following definition.

\begin{de}[Mild solution]
Let $u_0 \in X_1$. A continuous function $u:[0,\tau]\to X_1$ is called {mild solution} for problem \eqref{abstract equation in introduction}	if it satisfies
	\begin{equation*}
		u(t)=S(t)u_0+\int_0^tS(t-s)f(s,u(s))ds,\quad t \in [0,\tau].
	\end{equation*}
\end{de}

\subsection{The subcritical case}
For the subcritical setting, we consider problem \eqref{abstract equation in introduction} with $f$ a Borel measurable $X_{\gamma_0}$-valued function on $(0,\infty)\times X_{1}$, for some $\gamma_0 \in (0,1)$. Furthermore, we suppose that there exist constants $c>0$ and $\rho>1$ such that, for all $x,y \in X_{1}$ and $t>0$, we have
	\begin{equation*}
		\|f(t,x)-f(t,y)\|_{X_{\gamma_0}}
		\leq c\|x-y\|_{X_{1}}\left(\|x\|^{\rho-1}_{X_{1}}+\|y\|^{\rho-1}_{X_{1}}+1\right)
	\end{equation*}
	and
	\begin{equation*}
		\|f(t,x)\|_{X_{\gamma_0}}\leq c\left(\|x\|^{\rho}_{X_{1}}+1\right).
	\end{equation*}

Before state our main result of this subsection, let us prove a technical lemma. Recall that, for $x,y>0$, the {Beta function} is defined by
$$\textbf{B}(x,y):=\int_0^1 s^{x-1}(1-s)^{y-1}ds.$$
Moreover, for $a \in (0,1]$, the {(lower) incomplete Beta function} is given by
$$\textbf{B}_a(x,y):=\int_0^as^{x-1}(1-s)^{y-1}ds=\int_{1-a}^1(1-s)^{x-1}s^{y-1}ds.$$

\begin{lema}\label{Lemma I kappa goes to zero as kappa goes to 1+}
	Fixed $a>0$ and $b,c<1$, the integral
	\begin{equation}\label{drr}
		I_\kappa=\int_0^1 \left(\ln\left(\frac{\kappa-s}{1-s}\right)\right)^{a}(1-s)^{-b}s^{-c}\, d s
	\end{equation}
	converges whenever $\kappa>1$. Moreover, $I_\kappa \to 0$ as $\kappa \to 1^+$.
	\begin{proof}
		The veracity of these statements becomes more evident as we rewrite
		$$I_\kappa = \int_0^1 \left(h_\kappa(s)\right)^{a}(1-s)^{-(b+\delta a)}s^{-c}ds,$$
		where
		$$h_\kappa(s)=\ln\left(\frac{\kappa-s}{1-s}\right)(1-s)^{\delta}, \quad s \in (0,1),$$
		and $\delta>0$ is chosen so that $b+\delta a<1$. Now, we note that $h_\kappa$ is bounded in $(0,1)$, since $\lim_{s\to0^+}h_\kappa(s)=\ln(\kappa)$ and $\lim_{s \to 1^-}h_\kappa(s)=0$. Then
		\begin{align*}
			I_\kappa&\leq \left(\sup_{0<s<1}h_\kappa(s)\right)^a\int_0^1 (1-s)^{-(b+\delta a)}s^{-c}ds\\
			&= \left(\sup_{0<s<1}h_\kappa(s)\right)^a\textbf{B}(1-(b+\delta a),1-c)<\infty.
		\end{align*}
		Moreover, since the integrating function in \eqref{drr} decreases pointwise to $0$ as $\kappa \to 1^+$, it follows from the monotone convergence theorem that $\lim_{\kappa \to 1^+}I_\kappa=0$.
	\end{proof}
\end{lema}

The following is our main result in the subcritical context .
\begin{theorem}\label{thmsubcritico}
Suppose the pair $(A,g)$ satisfies the conditions of Theorem \ref{generation + regularity}.	If $1-\frac{1}{\zeta_g}<\gamma_0<1$,
then, given $x_0 \in X_1$ we can consider $\tau>0$ and $r>0$ such that for any $u_0 \in B_{X_1}[x_0,r]=\{x \in X_1\big/ \,\|x-x_0\|_{X_1}\leq r\}$ problem \eqref{abstract equation in introduction} has a unique mild solution $u(\cdot\ ; u_0) \in C\left([0,\tau];X_1\right)$. Moreover, the following statements hold: 
	\begin{itemize}
	\item[{(A)}] for all $\theta \in (0, \gamma_0-1+1/\zeta_g)$, we have
	$$u(\cdot,u_0) \in C((0,\tau];X_{1+\theta})$$
	and, if $J \subset X_1$ is compact, then
	$$\lim_{t\to0^+}t^{\zeta_g\theta}\sup_{u_0 \in J}\|u(t;u_0)\|_{X_{1+\theta}}=0.$$
	\item[{(B)}] For each $\theta \in [0, \gamma_0-1+1/\zeta_g)$, there exists a constant $L>0$ such that
	$$t^{\zeta_g\theta}\|u(t;u_0)-u(t;u_1)\|_{X_{1+\theta}}\leq L \|u_0-u_1\|_{X_1}, \quad t\in (0,\tau].$$
	whenever $ u_0,u_1 \in B_{X_1}[x_0,r]$.
	%
\end{itemize}
\end{theorem}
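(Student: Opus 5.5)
We argue by a Banach fixed point in a space weighted at $t=0$, in the spirit of Arrieta--Carvalho, with Theorem \ref{Theorem S from gamma to 1+theta} supplying the smoothing. Fix $\theta$ with $0<\theta<\gamma_0-1+1/\zeta_g$; the key exponent inequality $\zeta_g(1+\theta-\gamma_0)<1$ holds. Let $K(\tau)$ be the set of $\psi\in C([0,\tau];X_1)$ with $\psi\in C((0,\tau];X_{1+\theta})$, $\sup_{t\in[0,\tau]}\|\psi(t)-S(t)u_0\|_{X_1}\le \mu$ (or simply $\|\psi(t)\|_{X_1}\le R$), and $\sup_{t\in(0,\tau]} t^{\zeta_g\theta}\|\psi(t)\|_{X_{1+\theta}}\le \mu$. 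This is a complete metric space under $\sup\|\cdot\|_{X_1}$. In fact only the $X_1$ control enters the Lipschitz estimate of $f$, so the fixed point can be found in $C([0,\tau];X_1)$ with $\|\psi(t)\|_{X_1}\le R:=2M(\|x_0\|_{X_1}+r)+1$; the $X_{1+\theta}$ statements then follow a posteriori.

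\textbf{Step 1 (existence, uniqueness).} For $u_0\in B_{X_1}[x_0,r]$ define $T\psi(t)=S(t)u_0+\int_0^tS(t-s)f(s,\psi(s))ds$. Apply Theorem \ref{Theorem S from gamma to 1+theta} with $(\theta,\gamma)=(0,\gamma_0)$, which gives $\|S(t-s)\|_{\mathcal B(X_{\gamma_0},X_1)}\le M(t-s)^{-\zeta_g(1-\gamma_0)}$. Since $\zeta_g(1-\gamma_0)<1$ the integral converges, and
$$\|T\psi(t)\|_{X_1}\le M\|u_0\|_{X_1}+Mc(R^\rho+1)\frac{\tau^{1-\zeta_g(1-\gamma_0)}}{1-\zeta_g(1-\gamma_0)},$$
while $\|T\psi(t)-T\phi(t)\|_{X_1}\le Mc(2R^{\rho-1}+1)\frac{\tau^{1-\zeta_g(1-\gamma_0)}}{1-\zeta_g(1-\gamma_0)}\sup\|\psi-\phi\|_{X_1}$. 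Choosing $\tau$ small, uniformly in $u_0\in B_{X_1}[x_0,r]$, yields a self-map and a contraction. Continuity of $T\psi$ in $X_1$ is the delicate point. On $(0,\tau]$ we split $\int_0^{t_1}-\int_0^{t_0}$ into $\int_{t_0}^{t_1}$ plus $\int_0^{t_0}[S(t_1-s)-S(t_0-s)]f\,ds$. The latter is controlled by Theorem \ref{Theorem delta S from gamma to 1+theta} with $(\theta,\gamma)=(0,\gamma_0)$: after the substitution $s=t_0\sigma$ it reduces to $t_0^{1-\zeta_g(1-\gamma_0)}$ times the integral $I_\kappa$ of Lemma \ref{Lemma I kappa goes to zero as kappa goes to 1+}, with $\kappa=t_1/t_0$, $a=\gamma_0$, $b=\zeta_g(1-\gamma_0)$, $c=0$, and this tends to $0$ as $\kappa\to1^+$. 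Continuity at $t=0$ follows from the strong continuity of $S$ on $X_1$ (Proposition \ref{estimatives integers} together with density) plus the vanishing of the integral term. Uniqueness in $C([0,\tau];X_1)$ comes from a Gronwall-type singular inequality, or by restricting to small intervals.

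\textbf{Step 2 (part (A)).} Write $u=S(t)u_0+\int_0^tS(t-s)f(s,u(s))ds$. For the integral term use $\|S(t-s)\|_{\mathcal B(X_{\gamma_0},X_{1+\theta})}\le M(t-s)^{-\zeta_g(1+\theta-\gamma_0)}$, which is integrable and gives a bound $Ct^{1-\zeta_g(1+\theta-\gamma_0)}$. Hence $t^{\zeta_g\theta}\cdot$ (integral) $\le Ct^{1-\zeta_g(1-\gamma_0)}\to0$ uniformly in $u_0$. For the linear term, Proposition \ref{proposition S s.continuous and bounded growth in compacts} gives the compact-set limit. Continuity into $X_{1+\theta}$ on $(0,\tau]$ is obtained as in Step 1: Theorem \ref{Theorem delta S from gamma to 1+theta} handles $S(\cdot)u_0$ via $(\theta,\gamma)=(\theta,1)$, and it handles the integral via $(\theta,\gamma_0)$ and Lemma \ref{Lemma I kappa goes to zero as kappa goes to 1+}, now with $b=\zeta_g(1+\theta-\gamma_0)<1$ and $a=\gamma_0-\theta>0$.

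\textbf{Step 3 (part (B)).} For $\theta=0$,
$$\|u(t;u_0)-u(t;u_1)\|_{X_1}\le M\|u_0-u_1\|_{X_1}+C\int_0^t(t-s)^{-\zeta_g(1-\gamma_0)}\|u(s;u_0)-u(s;u_1)\|_{X_1}ds.$$
Taking sup over $[0,\tau]$ with $\tau$ small so that $C\tau^{1-\zeta_g(1-\gamma_0)}\le 1/2$ gives $L$. For $\theta>0$, multiply by $t^{\zeta_g\theta}$: the linear term contributes $\le M\|u_0-u_1\|_{X_1}$ by Theorem \ref{Theorem S from gamma to 1+theta} with $(\theta,\gamma)=(\theta,1)$, and the integral term is bounded by $t^{\zeta_g\theta}\,t^{1-\zeta_g(1+\theta-\gamma_0)}\sup\|u-v\|_{X_1}$, which is again controlled by the $\theta=0$ case.

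\textbf{Main obstacle.} The main obstacle is the continuity in time, both in $X_1$ and in $X_{1+\theta}$, given that $S$ lacks the semigroup property. This forces the difference estimate of Theorem \ref{Theorem delta S from gamma to 1+theta} together with Lemma \ref{Lemma I kappa goes to zero as kappa goes to 1+}, and the exponents have to be checked carefully there.
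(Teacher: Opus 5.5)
Your proposal is correct and follows essentially the same route as the paper. Both use a contraction on a closed ball of $C([0,\tau];X_1)$ with only the $X_{\gamma_0}\to X_1$ smoothing, continuity of $T\psi$ via Theorem~\ref{Theorem delta S from gamma to 1+theta} and Lemma~\ref{Lemma I kappa goes to zero as kappa goes to 1+}, singular Gronwall for uniqueness, and a posteriori estimates (Proposition~\ref{proposition S s.continuous and bounded growth in compacts} and the $\theta=0$ Lipschitz bound) for (A) and (B). The only real difference is that you center the ball at the origin with radius $R=2M(\|x_0\|_{X_1}+r)+1$ instead of at $x_0$; this removes the paper's requirement $\|S(t)x_0-x_0\|_{X_1}\le\mu/4$ and shows that $r$ may be taken arbitrarily large in the subcritical case. (Your initially proposed set with the weighted $X_{1+\theta}$ constraint would not obviously be complete under the $X_1$ sup-metric, but you rightly discard it.)
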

\begin{proof}
Initially, note that $1-\frac{1}{\zeta_g}<\gamma_0$ is equivalent to $1-\zeta_g(1-\gamma_0)>0$. Hence, consider $0<\mu\le 1$ and $\tau>0$ such that 
$$\|S(t)x_0-x_0\|_{X_1}\le \frac{\mu}{4} \quad \mbox{and}\quad MRc{\bf B}(1,1-\zeta_g(1-\gamma_0))t^{1-\zeta_g(1-\gamma_0)}\le  \frac{\mu}{4}$$
for any $t\in[0,\tau]$, where
$$R=\max\left\{(2(\|x_0\|_{X_1}+\mu)^{\rho-1}+1, (\|x_0\|+1)^{\rho}_{X_{1}}+1 \right\}.$$ 
Fix $r=\frac{\mu}{2M}$ and Consider the closed ball $\mathcal{K}$ given by
	$$\mathcal{K}=\left\lbrace \phi \in C\left([0,\tau];X_{1}\right)\big/\, \sup_{0\leq t\leq \tau}\|\phi(t)-x_0\|_{X_{1}}\leq \mu\right\rbrace.$$
Define the map $T$ on ${\mathcal{K}}$ by
	$$(T \phi)(t)=S(t)u_0+\int_0^t S(t-s)f(\phi (s))ds, \quad t \in [0,\tau].$$

Let us prove that $\mathcal{K}$ is $T$-invariant. Given $\phi\in\mathcal{K}$, it follows from Theorem \ref{Theorem delta S from gamma to 1+theta} that $S(\cdot)u_0$ is a continuous function. As for the map
$$t \mapsto \int_0^t S(t-s)f(\phi(s))ds,$$ 
note that for fixed $t_1,t_0 \in (0,\tau]$, with $t_1<t_0$, we have
\begin{align}\label{2s poi0}
	&\left\|\int_{0}^{t_1}S(t_1-s)f(\phi(s))ds-\int_{0}^{t_0}S(t_0-s)f(\phi (s))ds\right\|_{X_1} \nonumber\\
	\leq&\,\left\|\int_{0}^{t_1}\left(S(t_1-s)-S(t_0-s)\right)f(\phi (s))ds\right\|_{X_1}
	+\left\|\int_{t_1}^{t_0}S(t_0-s)f(\phi (s))ds\right\|_{X_1}.
\end{align}
Applying Theorem \ref{Theorem delta S from gamma to 1+theta} to the first term of this sum we get
\begin{align*}
	&\left\|\int_{0}^{t_1}\left(S(t_1-s)-S(t_0-s)\right)f(\phi (s))ds\right\|_{X_1}\\ 
	&\leq \int_{0}^{t_1}\|\left(S(t_1-s)-S(t_0-s)\right)f(\phi (s))\|_{X_{1}}ds\\
	&\leq Mc \int_{0}^{t_1}\ln\left(\frac{t_0-s}{t_1-s}\right)^{\gamma_0}(t_1-s)^{-\zeta_g(1-\gamma_0)}\left(\|\phi (s)\|^{\rho}_{X_1}+1\right)ds\\
	&\leq Mc\big((\|x_0\|+\mu)^{\rho}_{X_{1}}+1\big)\int_{0}^{t_1}\ln\left(\frac{t_0-s}{t_1-s}\right)^{\gamma_0}(t_1-s)^{-\zeta_g(1-\gamma_0)}ds\\
	&\leq Mc\big((\|x_0\|+\mu)^{\rho}_{X_{1}}+1\big) t_1^{-\zeta_g(1-\gamma_0)}\int_{0}^{1}\ln\left(\frac{t_0/t_1-s}{1-s}\right)^{\gamma_0}(1-s)^{-\zeta_g(1-\gamma_0)}ds,
\end{align*}
which, by Lemma \ref{Lemma I kappa goes to zero as kappa goes to 1+}, converges to $0$ as $|t_1-t_0|\to 0$. In the same way, the second term in \eqref{2s poi0} satisfies
\begin{align*}
	\left\|\int_{t_1}^{t_0}S(t_0-s)f(\phi (s))ds\right\|_{X_1}	&\leq
	Mc\big((\|x_0\|+\mu)^{\rho}_{X_{1}}+1\big)\int_{t_1}^{t_0}(t_0-s)^{-\zeta_g(1-{\gamma_0})}ds\\
	&=\,
	Mc\big((\|x_0\|+\mu)^{\rho}_{X_{1}}+1\big)\frac{(t_0-t_1)^{1-\zeta_g(1-\gamma_0)}}{1-\zeta_g(1-\gamma_0)},
\end{align*}
which also goes to $0$ as $|t_1- t_0|\to 0$; consequently\footnote{The case $t_0<t_1$ is analogous.}, $T\phi\in C\left([0,\tau];X_{1}\right)$. Also, if $\phi\in \mathcal{K}$, then

	\begin{align*}
		\|(T\phi)(t)-x_0\|_{X_1}  &\leq \|S(t)u_0-S(t)x_0\|_{X_1}+\|S(t)x_0-x_0\|_{X_1}+
		\int_0^t\|S(t-s)f(\phi(s))\|_{X_{1}}ds \\
		&\leq \frac{\mu}{2}+\frac{\mu}{4}+M\int_0^t(t-s)^{-\zeta_g(1-\gamma_0)}\|f(\phi(s))\|_{X_{\gamma_0}}ds\\
		&\leq \frac{3\mu}{4}+Mc\int_0^t(t-s)^{-\zeta_g(1-\gamma_0)}\big(\|\phi(s)\|^{\rho}_{X_{1}}+1\big)ds\\
		&\leq \frac{3\mu}{4}+Mc\big((\|x_0\|+\mu)^{\rho}_{X_{1}}+1\big)\int_0^t(t-s)^{-\zeta_g(1-\gamma_0)}ds\\
		&\leq \frac{3\mu}{4}+Mc\big((\|x_0\|+\mu)^{\rho}_{X_{1}}+1\big){\bf B}(1,1-\zeta_g(1-\gamma_0))t^{1-\zeta_g(1-\gamma_0)}\\
		& \le  \frac{3\mu}{4} + \frac{\mu}{4}=\mu.
	\end{align*}

Furthermore, $T$ is a contraction on $\mathcal{K}$. Indeed, for $\phi,\psi \in \mathcal{K}$, we have
\begin{align*}
	&	\|(T\phi)(t)-(T\psi)(t)\|_{X_1}  \leq	   \int_0^t\left\|S(t-s)\big(f(\phi(s))-f(\psi(s))\big)\right\|_{X_{1}}ds\\		
	&\leq Mc\int_0^t (t-s)^{-\zeta_g(1-\gamma_0)}\left(\|\phi(s)\|^{\rho-1}_{X_{1}}+\|\psi(s)\|^{\rho-1}_{X_{1}}+1\right)\|\phi(s)-\psi(s)\|_{X_1}ds \\
	&\leq \left(Mc\big(2(\|x_0\|_{X_1}+\mu)^{\rho-1}+1\big)\int_0^t (t-s)^{-\zeta_g(1-\gamma_0)}ds\right)\sup_{0\leq t\leq \tau}\|\phi(t)-\psi(t)\|_{X_1}\\
	&\leq Mc\big(2(\|x_0\|_{X_1}+\mu)^{\rho-1}+1\big){\bf B}(1,1-\zeta_g(1-\gamma_0))t^{1-\zeta_g(1-\gamma_0)}\sup_{0\leq t\leq \tau}\|\phi(t)-\psi(t)\|_{X_1}\\
	&\le \frac{1}{4}\sup_{0\leq t\leq \tau}\|\phi(t)-\psi(t)\|_{X_1}.
\end{align*}
The Banach fixed point theorem ensures the existence of a unique $\phi\in\mathcal{K}$ such that 
$$\phi(t)=(T \phi)(t)=S(t)u_0+\int_0^t S(t-s)f(\phi (s))ds, \quad t \in [0,\tau],$$
that is, the  function $u(t;u_0):=\phi(t)$ is a mild solution of \eqref{abstract equation in introduction} in $[0,\tau]$. If $v\in C\left([0,\tau];X_{1}\right)$ is another mild solution of \eqref{abstract equation in introduction}, then
\begin{align*}
	&	\|u(t;u_0)-v(t)\|_{X_1}  \leq \int_0^t\left\|S(t-s)\big(f(u(s;u_0))-f(v(s))\big)\right\|_{X_{1}}ds\\		
	&\leq Mc\int_0^t (t-s)^{-\zeta_g(1-\gamma_0)}\left(\|u(s;u_0)\|^{\rho-1}_{X_{1}}+\|v(s)\|^{\rho-1}_{X_{1}}+1\right)\|u(s;u_0)-v(s)\|_{X_1}ds \\
	&\leq b\int_0^t (t-s)^{-\zeta_g(1-\gamma_0)}\|u(s;u_0)-v(s)\|_{X_1}ds,
\end{align*}
where 
$$b:=Mc\sup_{0\le s\le \tau}\left(\|u(s;u_0)\|^{\rho-1}_{X_{1}}+\|v(s)\|^{\rho-1}_{X_{1}}+1\right)<\infty.$$
Applying the singular Gronwall inequality, see \cite[Lemma 7.1.1, page 188]{henry2006geometric}, we conclude that $u(t;u_0)=v(t)$, for all $t\in [0,\tau]$.

In order to conclude the proof, we consider the remaining items.

\noindent{\bf Proof of (A):} Consider $\theta \in (0, \gamma_0-1+1/\zeta_g)$. For all $t\in(0,\tau]$ we have
\begin{align*}
	&	\|u(t;u_0)\|_{X_{1+\theta}}  \leq \|S(t)u_0\|_{X_{1+\theta}}+\int_0^t\left\|S(t-s)f(u(s;u_0))\right\|_{X_{1+\theta}}ds\\		
	&\leq Mt^{-\zeta_g \theta}\|u_0\|_{X_1}+Mc\int_0^t (t-s)^{-\zeta_g(1+\theta-\gamma_0)}\left(\|u(s;u_0)\|^{\rho}_{X_{1}}+1\right)ds \\
	&\leq Mt^{-\zeta_g \theta}\|u_0\|_{X_1}+Mc\big((\|u_0\|_{X_1}+\mu)^\rho+1\big)t^{1-\zeta_g(1+\theta-\gamma_0)}{\bf B}(1,1-\zeta_g(1+\theta-\gamma_0)).
\end{align*}
Therefore, $u(\cdot\ ; u_0 ):(0,\tau]\to X_{1+\theta}$ is well defined. A similar argument to that used before proves that $u(\cdot\ ; u_0 )\in C((0,\tau];X_{1+\theta})$. Moreover, from the above estimate, if $t>0$ we have that
\begin{align*}
t^{\zeta_g \theta}\|u(t;u_0)\|_{X_{1+\theta}}  &\leq t^{\zeta_g \theta}\|S(t)u_0\|_{X_{1+\theta}}\\ &+Mc\big((\|u_0\|_{X_1}+\mu)^\rho+1\big)t^{1-\zeta_g(1-\gamma_0)}{\bf B}(1,1-\zeta_g(1+\theta-\gamma_0)),
\end{align*}
which implies that the right side of the above inequality goes to $0$ as $t\to 0^+$; we conclude the proof of (A) using Proposition \ref{proposition S s.continuous and bounded growth in compacts}.

\noindent{\bf Proof of (B):} If $u_0,u_1\in X_1$ then
\begin{align*}
		t^{\zeta_g \theta}\|u(t;u_0)-u(t,u_1)\|_{X_{1+\theta}} & \leq 
	 M\|u_0-u_1\|_{X_1}\\
	 & + \Gamma_{\theta}(t)\sup_{0\le t \leq  \tau}\|u(t;u_0)-u(t;u_1)\|_{X_{1}},
\end{align*}
where
$$ \Gamma_{\theta}(t)=Mct^{1-\zeta_g(1-\gamma_0)}{\bf B}(1,1-\zeta_g(1+\theta-\gamma_0))\left(\sup_{0\le t \leq  \tau}\|u(t;u_0)\|_{X_{1}}+\sup_{0\le t \leq  \tau}\|u(t;u_1)\|_{X_{1}}+1\right),$$
for all $t\in[0,\tau]$. Taking $\theta=0$ we have
$$\|u(t;u_0)-u(t,u_1)\|_{X_{1}}  \leq 
M\|u_0-u_1\|_{X_1} + \frac{1}{4}\sup_{0\le t \leq  \tau}\|u(t;u_0)-u(t;u_1))\|_{X_{1}},$$
and then
$$\sup_{0\le t \leq  \tau}\|u(t;u_0)-u(t;u_1))\|_{X_{1}}\le \frac{4}{3}\ M\|u_0-u_1\|_{X_1}.$$
Consequently, for $0\le \theta \le \theta_0< \gamma_0-1+1/\zeta_g$ it follows that
$$
t^{\zeta_g \theta}\|u(t;u_0)-u(t,u_1)\|_{X_{1+\theta}}  \leq 
	M\|u_0-u_1\|_{X_1} +\frac{4}{3}\ M \Gamma_{\theta}(t) \|u_0-u_1\|_{X_1},
$$
that is 
$$t^{\zeta_g \theta}\|u(t;u_0)-u(t,u_1)\|_{X_{1+\theta}}  \leq 
L\|u_0-u_1\|_{X_1},$$
with 
$$L=M\left(1+\frac{4}{3}\sup\left\{\Gamma_{\theta}(t)\big/ 0\le t\le \tau,\ 0\le \theta\le \theta_0\right\}\right),$$
which concludes the proof.
\end{proof} 

\subsection{The critical case}\label{section critical case}

In this subsection, we consider problem \eqref{abstract equation in introduction} in the critical framework. As we discussed in the introduction of this paper, assuming only that the function $f$ present in \eqref{abstract equation in introduction} is a locally Lipschitz function $f(t,\cdot):X_{1} \to X_{\gamma}$, for some $\gamma\le 1-\frac{1}{\zeta_g}$, it seems to be impossible to ensure that \eqref{abstract equation in introduction} is well-posed in general. To overcome this situation, we use the notion of $\varep$-regular map introduced by Arrieta and Carvalho \cite{arrieta2000abstract}.   

Particularly, we assume that there exist $\varep, \gamma(\varep) \in (0,1)$ such that $f(t,\cdot):X_{1+\varep}\to X_{\gamma(\varep)}$ is well-defined for each $t>0$, and the function $f(\cdot,x):(0,\infty)\to X_{\gamma(\varep)}$ is measurable for each $x \in X_{1+\varep}$. Furthermore, there exist constants $c>0$, $\rho>1$ and a non-decreasing function $\nu
	(\cdot)$ with $\lim_{t\to0^+}\nu(t)=0$, such that, for all $x,y \in X_{1+\varep}$ and $t>0$, we have
	\begin{align*}
		\|f(t,x)-f(t,y)\|_{X_{\gamma(\varep)}} \leq c\|x-y\|_{X_{1+\varep}}\left(\|x\|^{\rho-1}_{X_{1+\varep}}+\|y\|^{\rho-1}_{X_{1+\varep}}+\nu(t)t^{-1+\zeta_g(1-\gamma(\varep)+\varep)}\right)
	\end{align*}
	and
	\begin{equation*}
		\|f(t,x)\|_{X_{\gamma(\varep)}}\leq c\left(\|x\|^{\rho}_{X_{1+\varep}}+\nu(t)t^{-1+\zeta_g(1-\gamma(\varep))}\right).
	\end{equation*}

\begin{remark}
Although we assume that \( u_0 \in X_1 \), we make no assumptions about the behavior of \( f \) on \( X_1 \), as dictated by the critical setting.
\end{remark}

Before proceeding, let us define the type of solution we are looking for. 

\begin{de}[$\varep$-regular mild solution]
	A mild solution $u \in  C\left([0,\tau];X_1\right)$  to \eqref{abstract equation in introduction} is said to be \emph{$\varep$-regular} on $[0,\tau]$ if $u \in C\left((0,\tau];X_{1+\varep}\right)$.
\end{de} 

The following lemmas will be useful to prove our main result.

\begin{lema}\label{lemma critical case}
	Let $\tau_0,\mu >0$ be fixed constants and consider the set
	$$\mathcal{K}(\tau_0,\mu)=\left\lbrace \phi \in C\left((0,\tau_0];X_{1+\varep}\right); \, \|\phi\|_{\mathcal{K}(\tau_0,\mu)}:=\sup_{0<t\leq \tau_0}t^{\zeta_g\varep}\|\phi(t)\|_{X_{1+\varep}}\leq \mu\right\rbrace.$$
	Given a sub-interval $(a,b]\subset (0,\tau_0]$ and functions $\phi, \psi \in \mathcal{K}(\tau_0,\mu)$, for each $\theta \in [0,\gamma(\varep)-1+1/\zeta_g)$ and all $t \in (a,b]$, we have
	\begin{align}\label{inequality 1 lemma critical case}
		&t^{\zeta_g\theta}\left\|\int_{a}^{t}S(t-s)f(s,\phi (s))ds\right\|_{X_{1+\theta}}\nonumber
		\\
		\leq&\,
		m(\theta,a,b)\left(\left(\sup_{0<s\leq t}s^{\zeta_g\varep}\|\phi(s)\|_{X_{1+\varep}}\right)^\rho t^{1-\zeta_g(1-\gamma(\varep)+\rho\varep)} +\nu(t)\right)
	\end{align}
	and	
	\begin{align}\label{inequality 2 lemma critical case}
		&t^{\zeta_g\theta}\left\|\int_{a}^t S(t-s)\left(f(s,\phi (s))-f(s,\psi(s))\right)ds\right\|_{X_{1+\theta}}\nonumber\\
		\leq&\, 
		m(\theta,a,b)\left(2\mu^{\rho-1}t^{1-\zeta_g(1-{\gamma(\varep)}+\rho\varep)}+\nu(\tau_0)\right) \sup_{0<s\leq t}s^{\zeta_g\varep}\|\phi(s)-\psi(s)\|_{X_{1+\varep}},
	\end{align}
	where $m(\theta,a,b)$ is the biggest of the constants
	$$Mc \textbf{B}_{1-a/b}(1-\zeta_g\varep\rho,1-\zeta_g(1+\theta-\gamma(\varep)))$$
	and	$$Mc\textbf{B}_{1-a/b}\left(\zeta_g(1-\gamma(\varep)),1-\zeta_g(1+\theta-\gamma(\varep))\right).$$
	\begin{proof}First, let us prove \eqref{inequality 1 lemma critical case}. For the sake of simplicity, we denote 
		$$\lambda(t)=\sup_{0<s\leq t}s^{\zeta_g\varep}\|\phi(s)\|_{X_{1+\varep}}.$$
		We note that
		\begin{align*}\label{jui}
			&\left\|\int_a^{t}S(t-s)f(s,\phi (s))ds\right\|_{X_{1+\theta}}\leq \int_a^t \|S(t-s)\|_{\mathcal{B}(X_{\gamma(\varep)};X_{1+\theta})}\|f(s,\phi (s))\|_{X_{\gamma(\varep)}}ds\nonumber\\
			\leq&\,
			Mc\int_a^{t}(t-s)^{-\zeta_g(1+\theta-\gamma(\varep))}\left(\|\phi (s)\|_{X_{1+\varep}}^\rho+\nu(s)s^{-1+\zeta_g(1-\gamma(\varep))}\right)ds \nonumber\\
			\leq&\, 
			Mc \int_a^t(t-s)^{-\zeta_g(1+\theta-{\gamma(\varep)})}\left(s^{-\zeta_g\varep\rho}\lambda(t)^\rho+\nu(t)s^{-1+\zeta_g(1-\gamma(\varep))}\right)ds\nonumber\\
			=&\,
			Mc\left(\lambda(t)^\rho\int_a^{t}(t-s)^{-\zeta_g(1+\theta-{\gamma(\varep)})}s^{-\zeta_g\varep\rho}ds +\nu(t)\int_a^t(t-s)^{-\zeta_g(1+\theta-{\gamma(\varep)})}s^{-1+\zeta_g(1-\gamma(\varep))}ds\right).
		\end{align*}
		Performing the substitution $s=st$ in the last two integrals above, we have 
		\begin{align*}
			\left\|\int_a^{t}S(t-s)f(s,\phi (s))ds\right\|_{X_{1+\theta}}
			&\leq
			Mc\lambda(t)^\rho t^{1-\zeta_g(1+\theta-{\gamma(\varep)})-\zeta_g\varep\rho}\int_{a/t}^{1}(1-s)^{-\zeta_g(1+\theta-{\gamma(\varep)})}s^{-\zeta_g\varep\rho}ds \nonumber\\
			&+Mc\nu(t)t^{-\zeta_g\theta}\int_{a/t}^1(1-s)^{-\zeta_g(1+\theta-{\gamma(\varep)})}s^{-1+\zeta_g(1-\gamma(\varep))}ds\nonumber\\
			&\leq\,
			Mc\lambda(t)^\rho t^{1-\zeta_g(1+\theta-{\gamma(\varep)}+\varep\rho)}\int_{a/b}^{1}(1-s)^{-\zeta_g(1+\theta-{\gamma(\varep)})}s^{-\zeta_g\varep\rho}ds \nonumber\\
			&+Mc\nu(t)t^{-\zeta_g\theta}\int_{a/b}^1(1-s)^{-\zeta_g(1+\theta-{\gamma(\varep)})}s^{-1+\zeta_g(1-\gamma(\varep))}ds\nonumber\\
			&=\,
			Mc\lambda(t)^\rho t^{1-\zeta_g(1+\theta-{\gamma(\varep)}+\varep\rho)}\textbf{B}_{1-a/b}(1-\zeta_g\varep\rho,1-\zeta_g(1+\theta-{\gamma(\varep)}))\nonumber\\
			&+
			Mc\nu(t)t^{-\zeta_g\theta}\textbf{B}_{1-a/b}(\zeta_g(1-\gamma(\varep)),1-\zeta_g(1+\theta-{\gamma(\varep)}))\nonumber\\
			&\leq\,
			m(\theta,a,b)\left(\lambda(t)^\rho t^{1-\zeta_g(1+\theta-{\gamma(\varep)}+\varep\rho)}+\nu(t)t^{-\zeta_g\theta}\right).
		\end{align*}
		As for \eqref{inequality 2 lemma critical case}, denote $$Q(t)=\sup_{0<s\leq t}s^{\zeta_g\varep}\|\phi(s)-\psi(s)\|_{X_{1+\varep}}.$$
		We observe that
		
		\begin{align*}
			&\left\|\int_a^t S(t-s)\left(f(s,\phi (s))-f(s,\psi(s))\right)ds\right\|_{X_{1+\theta}} \\
			&\leq\,
			\int_a^{t}\|S(t-s)\|_{\mathcal{B}(X_{\gamma(\varep)},X_{1+\theta})}\|f(s,\phi(s))-f(s,\psi(s))\|_{X_{{\gamma(\varep)}}}ds\\
			&\leq\,
			Mc\int_a^{t}(t-s)^{-\zeta_g(1+\theta-{\gamma(\varep)})}\|\phi (s)-\psi(s)\|_{X_{1+\varep}}\\
			&\qquad \times \left(\|\phi (s)\|_{X_{1+\varep}}^{\rho-1}+\|\psi(s)\|_{X_{1+\varep}}^{\rho-1}+\nu(s)s^{-1+\zeta_g(1-\gamma(\varep)+\varep)}\right)ds \\
			&\leq\,
			Mc\int_a^t (t-s)^{-\zeta_g(1+\theta-{\gamma(\varep)})}s^{-\zeta_g\varep} Q(t)\nonumber\\
			&\qquad
			\times\left(2s^{-\zeta_g\varep(\rho-1)}\mu^{\rho-1}+\nu(\tau_0)s^{-1+\zeta_g(1-\gamma(\varep)+\varep)}\right)ds\\
			&=\,
			2Mc Q(t)\mu^{\rho-1}\int_a^t (t-s)^{-\zeta_g(1+\theta-{\gamma(\varep)})}s^{-\zeta_g\varep\rho}ds\\
			&+
			Mc Q(t)\nu(\tau_0)\int_a^t (t-s)^{-\zeta_g(1+\theta-{\gamma(\varep)})}s^{-1+\zeta_g(1-\gamma(\varep))}ds.
		\end{align*}
		Then,
		
		\begin{align*}
			&\left\|\int_a^t S(t-s)\left(f(s,\phi (s))-f(s,\psi(s))\right)ds\right\|_{X_{1+\theta}} \\
			\leq&\,
			2Mc Q(t)\mu^{\rho-1}t^{1-\zeta_g(1+\theta-{\gamma(\varep)}+\rho\varep)}\int_{a/t}^1 (1-s)^{-\zeta_g(1+\theta-{\gamma(\varep)})}s^{-\zeta_g\varep\rho}ds\\
			&+
			Mc Q(t)\nu(\tau_0)t^{-\zeta_g\theta}\int_{a/t}^1 (1-s)^{-\zeta_g(1+\theta-{\gamma(\varep)})}s^{-1+\zeta_g(1-\gamma(\varep))}ds\\
			\leq&\,
			2Mc Q(t)\mu^{\rho-1}t^{1-\zeta_g(1+\theta-{\gamma(\varep)}+\rho\varep)}\textbf{B}_{1-a/b}(1-\zeta_g\varep\rho,1-\zeta_g(1+\theta-{\gamma(\varep)}))\\
			&+
			Mc Q(t)\nu(\tau_0)t^{-\zeta_g\theta}\textbf{B}_{1-a/b}(\zeta_g(1-\gamma(\varep)),1-\zeta_g(1+\theta-{\gamma(\varep)}))\\
			\leq&\,
			m(\theta,a,b)\left(2\mu^{\rho-1}t^{1-\zeta_g(1+\theta-{\gamma(\varep)}+\rho\varep)}+\nu(\tau_0)t^{-\zeta_g\theta}\right) Q(t).
		\end{align*}
	\end{proof}
\end{lema}

The next lemma is a direct consequence of \cite[Lemma 7.1.2]{henry2006geometric}.

\begin{lema}[Singular Grönwall's Inequality]\label{Grönwall's inequality}
	Let $b \geq 0$, $\alpha<1$, $0\leq \tau<T<\infty$, and $\phi \in L^1_{\mathrm{loc}}([\tau,T))$. If
	$$|\phi(t)|\leq b\int_\tau^t(t-s)^{-\alpha}|\phi(s)|ds, \quad t \in [\tau,T),$$
	then $\phi\equiv 0$ a.e. on $[\tau,T)$.
\end{lema}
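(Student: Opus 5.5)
The statement to prove is the singular Grönwall inequality, Lemma \ref{Grönwall's inequality}. My plan is to obtain it from Henry's Lemma 7.1.1/7.1.2, but I would also give a self-contained argument by iterating the inequality and bounding the iterates with Beta functions. The first move is to translate the origin to $\tau$: setting $\tilde\phi(t)=\phi(t+\tau)$ on $[0,T-\tau)$, we may assume $\tau=0$. It suffices to prove $\phi\equiv0$ a.e. on every $[0,T_1]$ with $T_1<T$. On such an interval $\phi\in L^1(0,T_1)$. If $\alpha\le 0$, the kernel $(t-s)^{-\alpha}$ is bounded by $T_1^{-\alpha}$, and the classical Grönwall argument applies. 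So the real case is $0<\alpha<1$.

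\emph{Step 1: iterate.} Let $(K\psi)(t)=b\int_0^t(t-s)^{-\alpha}\psi(s)\,ds$ on nonnegative functions. $K$ is positive, and by Young's inequality it maps $L^1(0,T_1)$ into itself, since the kernel is integrable. The hypothesis reads $|\phi|\le K|\phi|$ a.e., so by positivity $|\phi|\le K^n|\phi|$ for every $n$. I would show that $K^n$ has kernel
$$k_n(t-s)=\frac{(b\Gamma(1-\alpha))^n}{\Gamma(n(1-\alpha))}(t-s)^{n(1-\alpha)-1}.$$
This follows by induction, using the Beta-function identity $\int_s^t(t-r)^{a-1}(r-s)^{c-1}dr=\mathbf{B}(a,c)(t-s)^{a+c-1}$.

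\emph{Step 2: pass to the limit.} Choose $n$ with $n(1-\alpha)\ge1$; then $k_n$ is bounded on $[0,T_1]$. Consequently
$$\|K^n\|_{L^1\to L^1}\le T_1\sup_{[0,T_1]}k_n\le C\frac{(b\Gamma(1-\alpha)\max(1,T_1))^{n}}{\Gamma(n(1-\alpha))}T_1,$$
which tends to $0$ as $n\to\infty$ by Stirling's formula, since $\Gamma(n(1-\alpha))$ grows superexponentially. From $\|\phi\|_{L^1(0,T_1)}\le\|K^n\|\,\|\phi\|_{L^1(0,T_1)}$ we conclude $\|\phi\|_{L^1(0,T_1)}=0$, so $\phi=0$ a.e. on $[0,T_1]$. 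Letting $T_1\uparrow T$ finishes the proof.

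The main obstacle is the bookkeeping in Step 1: justifying the kernel formula for $K^n$ rigorously (Fubini/Tonelli for nonnegative integrands) and making sure the a.e. inequality survives iteration. Both points are handled by positivity of $K$ and Tonelli's theorem. Alternatively, one can simply cite \cite[Lemma 7.1.1]{henry2006geometric} with $a\equiv0$, $\beta=1-\alpha$; its conclusion $|\phi(t)|\le 0\cdot E(\cdots)$ gives the result directly.
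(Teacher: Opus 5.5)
Your proposal is correct, and it is essentially the paper's route: the paper gives no argument beyond calling the lemma a direct consequence of Henry's singular Gr\"onwall inequality \cite[Lemma 7.1.2]{henry2006geometric}, and your fallback of citing \cite[Lemma 7.1.1]{henry2006geometric} with $a\equiv 0$, $\beta=1-\alpha$ (after shifting $\tau$ to $0$) does the same thing, while your self-contained iteration with the kernels $k_n$ and the superexponential growth of $\Gamma(n(1-\alpha))$ is exactly the proof behind Henry's lemma. As a minor simplification, you do not need the case split $\alpha\le 0$ or the condition $n(1-\alpha)\ge 1$, since $\|K^n\|_{L^1\to L^1}\le \|k_n\|_{L^1(0,T_1)}=(b\Gamma(1-\alpha))^n T_1^{n(1-\alpha)}/\Gamma(n(1-\alpha)+1)\to 0$ already holds for every $\alpha<1$.
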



The following is our main result in the critical case.

\begin{theorem}\label{theorem critic case well-posedness}
Suppose the pair $(A,g)$ satisfies the conditions of Theorem \ref{generation + regularity}. If $\gamma(\varep)\ge 1-\frac{1}{\zeta_g}+\rho\varep,$	then, for all $x_0 \in X_1$, there exist $r=r(x_0)>0$ and $\tau_0=\tau_0(x_0)>0$ such that problem \eqref{abstract equation in introduction} has an $\varep$-regular mild solution $u(\cdot\,;u_0)$ defined on $[0,\tau_0]$ for all $u_0 \in B_{X_1}[x_0,r]=\{x \in X_1;\,\|x-x_0\|_{X_1}\leq r\}$. Moreover, the following statements hold:
	\begin{itemize}
		\item[{(A)}] For all $\theta \in (0, \gamma(\varep)-1+1/\zeta_g)$ (in particular, for $\theta=\varep$), we have
		$$u(\cdot\,;u_0) \in C\left((0,\tau_0];X_{1+\theta}\right), \quad u_0 \in B_{X_1}[x_0,r],$$
		and if $J \subset B_{X_1}[x_0,r]$ is compact then
		$$\lim_{t\to0^+}t^{\zeta_g\theta}\sup_{u_0 \in J}\|u(t;u_0)\|_{X_{1+\theta}}=0.$$
		\item[{(B)}] For each $\theta \in [0, \gamma(\varep)-1+1/\zeta_g)$, there exists a constant $L=L(\theta,x_0)$ such that
		$$t^{\zeta_g\theta}\|u(t;u_0)-u(t;v_0)\|_{X_{1+\theta}}\leq L \|u_0-v_0\|_{X_1}, \quad t\in (0,\tau_0],\, u_0,v_0 \in B_{X_1}[x_0,r].$$
		\item[{(C)}] If $v:[0,\tau_1]\to X_1$ is an $\varep$-regular mild solution on $[0,\tau_1]$ for the problem \eqref{abstract equation in introduction} starting in $u_0 \in B_{X_1}[x_0,r]$ and $v$ also satisfies
		$$\lim_{t \to 0^+}t^{\zeta_g\varep}\|v(t)\|_{X_{1+\varep}}=0,$$
		then $v(t) = u(t;u_0)$ for all $t \in [0,\min\{\tau_0,\tau_1\}]$.
		\item[{(D)}] If $\zeta_g<(1-\gamma(\varep)+\rho\varep)^{-1}$, then $r>0$ can be taken arbitrarily large, that is, the time of existence is uniform on bounded sets of $X_1$.
	\end{itemize}
\end{theorem}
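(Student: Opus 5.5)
The plan is to adapt the Arrieta--Carvalho fixed point argument to the resolvent family, working in the weighted space $\mathcal{K}(\tau_0,\mu)$ of Lemma \ref{lemma critical case}. Fix $x_0\in X_1$. By Proposition \ref{proposition S s.continuous and bounded growth in compacts}, applied to the compact set $\{x_0\}$ with $\beta=\varep$, we can pick $\tau_0$ so small that $\sup_{0<t\le\tau_0}t^{\zeta_g\varep}\|S(t)x_0\|_{X_{1+\varep}}\le\mu/4$. Theorem \ref{Theorem S from gamma to 1+theta} (with $\gamma=1$, $\theta=\varep$) gives $t^{\zeta_g\varep}\|S(t)(u_0-x_0)\|_{X_{1+\varep}}\le M\|u_0-x_0\|_{X_1}$, so we take $r=\mu/(4M)$. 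On $\mathcal{K}(\tau_0,\mu)$, with the metric $\sup_t t^{\zeta_g\varep}\|\phi(t)-\psi(t)\|_{X_{1+\varep}}$ (a complete metric space), define
\[
(T\phi)(t)=S(t)u_0+\int_0^tS(t-s)f(s,\phi(s))\,ds .
\]
Apply Lemma \ref{lemma critical case} with $a=0$, $b=\tau_0$, $\theta=\varep$. The admissibility $\varep<\gamma(\varep)-1+1/\zeta_g$ follows from $\gamma(\varep)\ge1-1/\zeta_g+\rho\varep$ and $\rho>1$. The hypothesis also makes the exponent $1-\zeta_g(1-\gamma(\varep)+\rho\varep)$ nonnegative, so $t^{1-\zeta_g(1-\gamma(\varep)+\rho\varep)}\le\max\{1,\tau_0^{\cdots}\}$. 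The lemma then yields
\[
\|T\phi\|\le \mu/2+m\bigl(\mu^\rho C+\nu(\tau_0)\bigr),\qquad
\|T\phi-T\psi\|\le m\bigl(2\mu^{\rho-1}C+\nu(\tau_0)\bigr)\|\phi-\psi\| .
\]
Choosing $\mu$ small and then $\tau_0$ small, using $\nu(\tau_0)\to0$, makes $T$ a $1/2$-contraction of $\mathcal{K}(\tau_0,\mu)$. Continuity of $T\phi$ on $(0,\tau_0]$ into $X_{1+\varep}$ is proved as in Theorem \ref{thmsubcritico}, using Theorem \ref{Theorem delta S from gamma to 1+theta} and Lemma \ref{Lemma I kappa goes to zero as kappa goes to 1+}. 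Continuity into $X_1$ on $[0,\tau_0]$ comes from inequality \eqref{inequality 1 lemma critical case} with $\theta=0$: the integral term tends to $0$ as $t\to0^+$ because of $\lambda(t)^\rho$ or $\nu(t)$, and $S(t)u_0\to u_0$. This gives existence of an $\varep$-regular mild solution.

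\textbf{Main obstacle.} In the critical equality case $\gamma(\varep)=1-1/\zeta_g+\rho\varep$ the power of $t$ equals $0$, so smallness cannot come from $\tau_0$ alone. It must come from $\mu$ small, and $\mu$ small is only admissible because $t^{\zeta_g\varep}\|S(t)x_0\|_{X_{1+\varep}}\to0$. This is exactly why $r$ depends on $x_0$ and not only on $\|x_0\|$. In case (D) the power is strictly positive, so $\tau_0$ small absorbs any $\mu$. We then take $\mu=2M\sup_{\|u_0\|\le R}\|u_0\|_{X_1}$ using Theorem \ref{Theorem S from gamma to 1+theta}, and shrink $\tau_0$; this gives uniformity on balls.

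For (A), apply \eqref{inequality 1 lemma critical case} with general $\theta$, together with Theorem \ref{Theorem S from gamma to 1+theta} for $S(t)u_0$. The term $\lambda(t)\to0$ uniformly on compact $J$ follows from Proposition \ref{proposition S s.continuous and bounded growth in compacts} and the fixed point estimate $\lambda_u(t)\le 2\sup_J t^{\zeta_g\varep}\|S(t)u_0\|+o(1)$, which we get by rerunning the contraction on $(0,t]$. For (B), first take $\theta=\varep$ and the contraction to get $\|u(\cdot;u_0)-u(\cdot;v_0)\|\le2M\|u_0-v_0\|_{X_1}$. Then feed this into \eqref{inequality 2 lemma critical case} for general $\theta$.

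For (C), given $v$ with $t^{\zeta_g\varep}\|v(t)\|_{X_{1+\varep}}\to0$, choose $\tau'$ small so that both $u$ and $v$ lie in $\mathcal{K}(\tau',\mu)$. Uniqueness of the fixed point there gives $u=v$ on $[0,\tau']$. On $[\tau',\min\{\tau_0,\tau_1\}]$ both are bounded in $X_{1+\varep}$, so the Lipschitz bound combined with Lemma \ref{Grönwall's inequality} applied to $\|u-v\|_{X_{1+\varep}}$ with exponent $\zeta_g(1+\varep-\gamma(\varep))<1$ gives $u\equiv v$.
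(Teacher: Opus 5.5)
Your proposal is correct and follows the paper's proof essentially step for step: the same weighted space $\mathcal{K}(\tau_0,\mu)$ with Lemma \ref{lemma critical case} at $a=0$, $\mu$ fixed small before $\tau_0$ (needed because the power of $t$ vanishes in the equality case), absorption of $\lambda_u(t)^\rho\le\mu^{\rho-1}\lambda_u(t)$ for the compact-set limit and for (B), and uniqueness in $\mathcal{K}(\tau',\mu)$ followed by the singular Gr\"onwall inequality for (C). One small slip is in (A): the linear term $t^{\zeta_g\theta}\sup_{u_0\in J}\|S(t)u_0\|_{X_{1+\theta}}$ tends to $0$ by Proposition \ref{proposition S s.continuous and bounded growth in compacts} with $\beta=\theta$, whereas Theorem \ref{Theorem S from gamma to 1+theta} only gives that this term is bounded.
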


\begin{proof}
As in Lemma \ref{lemma critical case}, for each $\theta \in [0,\gamma(\varep)-1+1/\zeta_g)$ and $0\leq a <b<\infty$, we denote by $m(\theta,a,b)$ the biggest of the constants
$$Mc \textbf{B}_{1-a/b}(1-\zeta_g\varep\rho,1-\zeta_g(1+\theta-\gamma(\varep)))$$
and	$$Mc\textbf{B}_{1-a/b}\left(\zeta_g(1-\gamma(\varep)),1-\zeta_g(1+\theta-\gamma(\varep))\right).$$

Fixed $x_0 \in X_1$, let $\tau_0=\tau_0(x_0) \in (0,1]$ arbitrarily small and $\mu=\mu(x_0)>0$ such that
\begin{equation}\label{gyg}
	m(\varep,0,\tau_0)\mu^{\rho-1}t^{1-\zeta_g(1-\gamma(\varep)+\rho\varep)}\leq \frac{1}{4}, \quad 0<t\leq \tau_0,
\end{equation}
\begin{equation}\label{ggg}
	t^{\zeta_g\varep}\|S(t)x_0\|_{X_{1+\varep}}\leq \frac{\mu}{4}, \quad 0<t\leq \tau_0,
\end{equation}
and
\begin{equation}\label{gvg}
	m(\varep,0,\tau_0)\nu(\tau_0)\leq \min\{1/4,\mu/4\}.
\end{equation}
Then, let us define
\begin{equation}\label{g g}
	r(x_0)=\frac{\mu}{8M}.
\end{equation} 
We note that a such pair $(\tau_0,\mu)$ exists, since
\begin{itemize}
	\item $\rho>1$ and $\lim_{\tau_0\to 0^+}\nu(\tau_0)=0,$ by our hypotheses on $f$;
	\item $m(\varep,0,\tau_0)$ does not depends upon $\tau_0$,
	\item $1-\zeta_g(1-\gamma(\varep)+\rho\varep)\geq 0$
	\item $\lim_{t \to 0^+} t^{\zeta_g\varep}\|S(t)x_0\|_{X_{1+\varep}}=0$, by Proposition \ref{proposition S s.continuous and bounded growth in compacts}.
\end{itemize} 
Consider the set
$${\mathcal{K}(\tau_0,\mu)}=\left\lbrace \phi \in C\left((0,\tau_0];X_{1+\varep}\right);\, \|\phi\|_{\mathcal{K}(\tau_0,\mu)}:=\sup_{0<t\leq \tau_0}t^{\zeta_g\varep}\|\phi(t)\|_{X_{1+\varep}}\leq \mu\right\rbrace$$
and, fixed $u_0 \in B_{X_1}[x_0,r]=\{x \in X_1; \, \|x-x_0\|_{X_1}\leq r\}$, the map $T$ defined on ${\mathcal{K}(\tau_0,\mu)}$ by
$$(T \phi)(t)=S(t)u_0+\int_0^t S(t-s)f(s, \phi (s))ds, \quad t \in (0,\tau_0].$$
As one can check, $\mathcal{K}(\tau_0,\mu)$, equipped with the metric
$$d(\phi,\psi):=\|\phi-\psi\|_{\mathcal{K}(\tau_0,\mu)},$$
is a complete metric space. Our goal is to prove that $T$ is a contraction on ${\mathcal{K}(\tau_0,\mu)}$; so we can use the Banach fixed point theorem to obtain a function $u(\cdot\,;u_0) \in {\mathcal{K}(\tau_0,\mu)}$ satisfying
$$u(t;u_0)=S(t)u_0+\int_0^t S(t-s)f(s, u(s;u_0))ds, \quad t \in (0,\tau_0].$$
For this purpose, let us prove that if $\phi  \in {\mathcal{K}(\tau_0,\mu)}$, then $T\phi \in C((0,\tau_0];X_{1+\theta})$ for all $\theta \in [0,\gamma(\varep)-1+1/\zeta_g)$.  Indeed, Theorem \ref{Theorem delta S from gamma to 1+theta} guarantees us that $S(\cdot)u_0 \in C((0,\tau_0];X_{1+\theta})$. Then, let us consider the function $t \mapsto \int_0^t S(t-s)f(s,\phi(s))ds$. For $t_1,t_0 \in (0,\tau_0]$, say $t_1<t_0$, we have

\begin{align}
	&\left\|\int_{0}^{t_1}S(t_1-s)f(s,\phi(s))ds-\int_{0}^{t_0}S(t_0-s)f(s,\phi (s))ds\right\|_{X_{1+\theta}} \nonumber\\
	&\leq\left\|\int_{0}^{t_1}\left(S(t_1-s)-S(t_0-s)\right)f(s,\phi (s))ds\right\|_{X_{1+\theta}}\label{2s poi}
	\\
	&\quad+\left\|\int_{t_1}^{t_0}S(t_0-s)f(s,\phi (s))ds\right\|_{X_{1+\theta}}.\label{2t poi}
\end{align}
The term in \eqref{2s poi} satisfies

\begin{align}\label{qaqa}
	&\left\|\int_{0}^{t_1}\left(S(t_1-s)-S(t_0-s)\right)f(s,\phi (s))ds\right\|_{X_{1+\theta}}\nonumber\\ \leq&\, \int_{0}^{t_1}\|\left(S(t_1-s)-S(t_0-s)\right)\|_{\mathcal{B}(X_{\gamma(\varep)},X_{1+\theta})}\|f(s,\phi (s))\|_{X_{\gamma(\varep)}}ds\nonumber\\
	\leq&\,
	Mc\int_{0}^{t_1}\ln\left(\frac{t_0-s}{t_1-s}\right)^{\gamma(\varep)-\theta}(t_1-s)^{-\zeta_g(1+\theta-\gamma(\varep))} \left(\|\phi(s)\|_{X_{1+\varep}}^{\rho}+\nu(s)s^{-1+\zeta_g(1-\gamma(\varep))}\right)ds\nonumber\\
	\leq&\, 
	M c \int_{0}^{t_1}\ln\left(\frac{t_0-s}{t_1-s}\right)^{\gamma(\varep)-\theta}(t_1-s)^{-\zeta_g(1+\theta-\gamma(\varep))}\left(\left(\mu s^{-\zeta_g\varep}\right)^{\rho}+\nu(\tau_0)s^{-1+\zeta_g(1-\gamma(\varep))}\right)ds
	\nonumber\\
	=&\,
	M c \mu^\rho \int_{0}^{t_1}\ln\left(\frac{t_0-s}{t_1-s}\right)^{\gamma(\varep)-\theta}(t_1-s)^{-\zeta_g(1+\theta-\gamma(\varep))}s^{-\zeta_g\varep\rho}ds\nonumber\\
	&+
	M c \nu(\tau_0) \int_{0}^{t_1}\ln\left(\frac{t_0-s}{t_1-s}\right)^{\gamma(\varep)-\theta}(t_1-s)^{-\zeta_g(1+\theta-\gamma(\varep))}s^{-1+\zeta_g(1-\gamma(\varep))}ds\nonumber\\
	=&\,
	M c \mu^\rho t_1^{1-\zeta_g(1+\theta-\gamma(\varep)+\varep\rho)} \int_{0}^{1}\ln\left(\frac{t_0/t_1-s}{1-s}\right)^{\gamma(\varep)-\theta}(1-s)^{-\zeta_g(1+\theta-\gamma(\varep))}s^{-\zeta_g\varep\rho}ds\nonumber\\
	&+
	M c \nu(\tau_0) t_1^{\zeta_g\theta} \int_{0}^{1}\ln\left(\frac{t_0/t_1-s}{1-s}\right)^{\gamma(\varep)-\theta}(1-s)^{-\zeta_g(1+\theta-\gamma(\varep))}s^{-1+\zeta_g(1-\gamma(\varep))}ds,
\end{align}
 which, by Lemma \ref{Lemma I kappa goes to zero as kappa goes to 1+}, goes to $0$ as $|t_1-t_0|\to 0$. While the term in \eqref{2t poi} satisfies

\begin{align}\label{qsqs}
	&\left\|\int_{t_1}^{t_0}S(t_0-s)f(s,\phi (s))ds\right\|_{X_{1+\theta}}\nonumber\\
	\leq&\,
	\int_{t_1}^{t_0}M(t_0-s)^{-\zeta_g(1+\theta-{\gamma(\varep)})}c\left(\|\phi (s)\|_{X_{1+\varep}}^\rho+\nu(s)s^{-1+\zeta_g(1-\gamma(\varep))}\right)ds \nonumber\\
	\leq&\,
	Mc \int_{t_1}^{t_0}(t_0-s)^{-\zeta_g(1+\theta-{\gamma(\varep)})}\left(\left(s^{-\zeta_g\varep}\mu\right)^\rho+\nu(\tau_0) s^{-1+\zeta_g(1-\gamma(\varep))}\right)ds\nonumber\\
	=&\,
	Mc\mu^\rho\int_{t_1}^{t_0}(t_0-s)^{-\zeta_g(1+\theta-{\gamma(\varep)})}s^{-\zeta_g\varep\rho}ds+
	Mc\nu(\tau_0)\int_{t_1}^{t_0}(t_0-s)^{-\zeta_g(1+\theta-{\gamma(\varep)})}s^{-1+\zeta_g(1-\gamma(\varep))}ds\nonumber\\
	\leq&\,
	Mc\mu^\rho t_0^{1-\zeta_g(1+\theta-{\gamma(\varep)}+\varep\rho)}\int_{t_1/t_0}^{1}(1-s)^{-\zeta_g(1+\theta-{\gamma(\varep)})}s^{-\zeta_g\varep\rho}ds\nonumber\\
	&+
	Mc\nu(\tau_0) t_0^{-\zeta_g\theta}\int_{t_1/t_0}^1(1-s)^{-\zeta_g(1+\theta-{\gamma(\varep)})}s^{-1+\zeta_g(1-\gamma(\varep))}ds
\end{align}
which also goes to $0$ as $|t_1-t_0|\to0$. Therefore,

\begin{equation}\label{equation hjt}
	T\phi \in C((0,\tau_0];X_{1+\theta}), \quad \textrm{for all } \phi \in {\mathcal{K}(\tau_0,\mu)} \textrm{ and } \theta \in [0,\gamma(\varep)-1+1/\zeta_g),
\end{equation}
and, in particular,
\begin{equation}\label{plk}
	T\phi\in C((0,\tau_0];X_{1+\varep}), \quad \textrm{for all }\phi \in {\mathcal{K}(\tau_0,\mu)}.
\end{equation}
Moreover, for all $t \in (0,\tau_0]$, by Lemma \ref{lemma critical case}, \eqref{gyg} and \eqref{gvg}, we have
\begin{align*}
	t^{\zeta_g\varep}\left\|\int_{0}^{t}S(t-s)f(s,\phi (s))ds\right\|_{X_{1+\varep}}&
	\leq
	m(\varep,0,\tau_0)\left(\left(\sup_{0<s\leq t}s^{\zeta_g\varep}\|\phi(s)\|_{X_{1+\varep}}\right)^\rho t^{1-\zeta_g(1-\gamma(\varep)+\rho\varep)} +\nu(t)\right)\\
&	\leq
	m(\varep,0,\tau_0)\mu^{\rho} t^{1-\zeta_g(1-\gamma(\varep)+\rho\varep)} +m(\varep,0,\tau_0)\nu(\tau_0)\\
&	\leq \frac{\mu}{4} + \frac{\mu}{4}=\frac{\mu}{2},
\end{align*}
while by \eqref{ggg} and \eqref{g g},
\begin{align*}
	t^{\zeta_g\varep}\|S(t)u_0\|_{X_{1+\varep}}\leq&\, t^{\zeta_g\varep}\|S(t)(u_0-x_0)\|_{X_{1+\varep}}+t^{\zeta_g\varep}\|S(t)x_0\|_{X_{1+\varep}}\\
	\leq&\, M\|u_0-x_0\|_{X_1}+\mu/4\\
	\leq&\, Mr+\frac{\mu}{4} = \frac{\mu}{2}.
\end{align*}
Then
\begin{equation}\label{rty}
	\|T\phi\|_{\mathcal{K}(\tau_0,\mu)} =\sup_{0<t\leq \tau_0}t^{\zeta_g\varep}\|(T\phi)(t)\|_{X_{1+\varep}} \leq \frac{\mu}{2}+\frac{\mu}{2}=\mu.
\end{equation}
Therefore, \eqref{plk} and \eqref{rty} mean that $T\left({\mathcal{K}(\tau_0,\mu)}\right)\subset {\mathcal{K}(\tau_0,\mu)}$. To conclude the prove that $T$ is a contraction, we observe that, if $\phi,\psi \in {\mathcal{K}(\tau_0,\mu)}$, then Lemma \ref{lemma critical case} also provides
\begin{align*}
	&t^{\zeta_g\varep}\|(T\phi)(t)-(T\psi)(t)\|_{X_{1+\varep}}
	=\,t^{\zeta_g\varep}\left\|\int_{0}^t S(t-s)\left(f(s,\phi (s))-f(s,\psi(s))\right)ds\right\|_{X_{1+\varep}}\\
	\leq&\, 
	m(\varep,0,\tau_0)\left(2\mu^{\rho-1}t^{1-\zeta_g(1-{\gamma(\varep)}+\rho\varep)}+\nu(\tau_0)\right) \sup_{0<s\leq t}s^{\zeta_g\varep}\|\phi(s)-\psi(s)\|_{X_{1+\varep}}\\
	\leq&\, \frac{1}{2}\sup_{0<s\leq t}s^{\zeta_g\varep}\|\phi(s)-\psi(s)\|_{X_{1+\varep}}.
\end{align*}
Thus, $$\|T\phi-T\psi\|_{\mathcal{K}(\tau_0,\mu)}=\sup_{0<s\leq \tau_0}t^{\zeta_g\varep}\|(T\phi)(t)-(T\psi)(t)\|_{X_{1+\varep}}\leq \frac{1}{2}\|\phi-\psi\|_{\mathcal{K}(\tau_0,\mu)}.$$
We can, therefore, evoke the Banach fixed point theorem which guarantees us the existence of a function $\phi_0 \in {\mathcal{K}(\tau_0,\mu)}$ such that
$$\phi_0(t)=S(t)u_0+\int_0^t S(t-s)f\left(s,\phi_0(s)\right)ds, \quad t \in (0,\tau_0].$$
So, let us define $u(t;u_0)=\phi_0(t)$ for $t \in (0,\tau_0]$ and $u(0,u_0)=u_0$.

Now, suppose $\theta \in (0,\gamma(\varep)-1+1/\zeta_g)$. By Lemma \ref{lemma critical case},
\begin{align}
	&t^{\zeta_g\theta}\left(\|u(t;u_0)\|_{X_{1+\theta}}-\|S(t)u_0\|_{X_{1+\theta}}\right)\leq t^{\zeta_g\theta}\left\|\int_{0}^{t}S(t-s)f(s,u(s;u_0))ds\right\|_{X_{1+\theta}}\nonumber\\
	&\leq
	m(\theta,0,\tau_0)\left[\left(\sup_{0<s\leq t}s^{\zeta_g\varep}\|u(s;u_0)\|_{X_{1+\varep}}\right)^\rho t^{1-\zeta_g(1-\gamma(\varep)+\rho\varep)}+\nu(t)\right].\label{equation pol}
\end{align}
In particular, if $\theta = \varep$, we have
\begin{align*}
	&t^{\zeta_g\varep}\|u(t;u_0)\|_{X_{1+\varep}}-t^{\zeta_g\varep}\|S(t)u_0\|_{X_{1+\varep}}\leq 
	\\
	&
	m(\varep,0,\tau_0)\left(\left(\sup_{0<s\leq t}s^{\zeta_g\varep}\|u(s;u_0)\|_{X_{1+\varep}}\right)^\rho t^{1-\zeta_g(1-\gamma(\varep)+\rho\varep)} +\nu(t)\right)\\
	\leq &\,
	m(\varep,0,\tau_0)\mu^{\rho-1}\left(\sup_{0<s\leq t}s^{\zeta_g\varep}\|u(s;u_0)\|_{X_{1+\varep}}\right)t^{1-\zeta_g(1-{\gamma(\varep)}+\rho\varep)} +m(\varep,0,\tau_0)\nu(t) \\
	\leq &\,
	\frac{1}{4}\sup_{0<s\leq t}s^{\zeta_g\varep}\|u(s;u_0)\|_{X_{1+\varep}} +m(\varep,0,\tau_0)\nu(t).
\end{align*}
Then,
\begin{align*}
	\sup_{0<s\leq t}s^{\zeta_g\varep}\|u(s;u_0)\|_{X_{1+\varep}}\leq	&\sup_{0<s\leq t}s^{\zeta_g\varep}\|S(s)u_0\|_{X_{1+\varep}}\\
	&+\frac{1}{4}\sup_{0<s\leq t}s^{\zeta_g\varep}\|u(s;u_0)\|_{X_{1+\varep}} +m(\varep,0,\tau_0)\nu(t)
\end{align*}
and
\begin{align*}
	\frac{3}{4}\sup_{0<s\leq t}s^{\zeta_g\varep}\|u(s;u_0)\|_{X_{1+\varep}}\leq\sup_{0<s\leq t}s^{\zeta_g\varep}\|S(s)u_0\|_{X_{1+\varep}}+m(\varep,0,\tau_0)\nu(t).
\end{align*}
Therefore, if $J \subset B_{X_1}[x_0,r]$ is compact, we have
\begin{align*}
	&\frac{3}{4}\lim_{t\to0^+}\sup_{u_0 \in J} \left(\sup_{0<s\leq t}s^{\zeta_g\varep}\|u(s;u_0)\|_{X_{1+\varep}}\right)\\
	\leq&\,
	\lim_{t\to0^+}\sup_{u_0 \in J} \left(\sup_{0<s\leq t}s^{\zeta_g\varep}\|S(s)u_0\|_{X_{1+\varep}}\right)+\lim_{t\to0^+}m(\varep,0,\tau_0)\nu(t)\\
	=&\,
	\lim_{t\to0^+}\sup_{0<s\leq t} s^{\zeta_g\varep}\left(\sup_{u_0 \in J}\|S(s)u_0\|_{X_{1+\varep}}\right) \\
	=&\,
	\lim_{t\to0^+} s^{\zeta_g\varep}\left(\sup_{u_0 \in J}\|S(s)u_0\|_{X_{1+\varep}}\right).
\end{align*}
Proposition \ref{proposition S s.continuous and bounded growth in compacts} states that the last limit above is equal to $0$. Then,
$$\lim_{t\to0^+}\sup_{u_0 \in J} \left(\sup_{0<s\leq t}s^{\zeta_g\varep}\|u(s;u_0)\|_{X_{1+\varep}}\right)=0.$$
Using this information in \eqref{equation pol}, it follows that
\begin{equation}\label{equation hjy}
	\lim_{t\to0^+}t^{\zeta_g\theta}\left(\sup_{u_0 \in J}\|u(t;u_0)\|_{X_{1+\theta}}\right)=0.
\end{equation}

The only condition we still have to check in order to prove that $u(\cdot\,;u_0)$ is an $\varep$-regular mild solution to \eqref{abstract equation in introduction} is continuity at $t=0$. About that, we have
\begin{align*}
	&\|u(t;u_0)-u_0\|_{X_1}\leq \|S(t)u_0-u_0\|_{X_1}+\left\|\int_{0}^{t}S(t-s)f(s,u(s;u_0))ds\right\|_{X_{1}}\\
	\leq&\,
	\|S(t)u_0-u_0\|_{X_1} \\
	+& \,
	m(0,0,\tau_0)\left(\left(\sup_{0<s\leq t}s^{\zeta_g\varep}\|u(s;u_0)\|_{X_{1+\varep}}\right)^\rho t^{1-\zeta_g(1-\gamma(\varep)+\rho\varep)} +\nu(t)\right).
\end{align*}
which, by \eqref{equation hjy} and the strong continuity of $S$,  goes to $0$ as $t\to0^+$. We also observe that (A) is a consequence of \eqref{equation hjt} and \eqref{equation hjy}. 

Let us prove the other items.

\noindent\textbf{Proof of {(B)}:} If $u_0, v_0 \in B_{X_1}[x_0,r]$, it follows from Lemma \ref{lemma critical case} that
\begin{align}\label{fgt}
	&t^{\zeta_g\theta}\|u(t;u_0)-u(t;v_0)\|_{X_{1+\theta}}\leq  t^{\zeta_g\theta}\|S(t)(u_0-v_0)\|_{X_{1+\theta}}\nonumber\\
	&\qquad+t^{\zeta_g\theta}\left\|\int_0^t S(t-s)\left(f(s,u(s;u_0))-f(s,u(s,v_0))\right)ds\right\|_{X_{1+\theta}}\nonumber\\
	\leq&\, M\|u_0-v_0\|_{X_1}+m(\theta,0,\tau_0)\left(2\mu^{\rho-1}t^{1-\zeta_g(1-{\gamma(\varep)}+\rho\varep)}+\nu(\tau_0)\right)\nonumber\\
	&\qquad \qquad \qquad \times\sup_{0<s\leq t}s^{\zeta_g\varep}\|\phi(s)-\psi(s)\|_{X_{1+\varep}}.
\end{align}
In particular, for $\theta=\varep$,
\begin{align*}
	&t^{\zeta_g\varep}\|u(t;u_0)-u(t;v_0)\|_{X_{1+\varep}}\leq M\|u_0-v_0\|_{X_1}\nonumber\\
	&\quad+m(\varep,0,\tau_0)\left(2\mu^{\rho-1}t^{1-\zeta_g(1-{\gamma(\varep)}+\rho\varep)}+\nu(\tau_0)\right)\sup_{0<s\leq t}s^{\zeta_g\varep}\|\phi(s)-\psi(s)\|_{X_{1+\varep}}\\
	&\leq
	M\|u_0-v_0\|_{X_{1}} +\frac{3}{4}\sup_{0<s\leq t}s^{\zeta_g\varep}\|u(s;u_0) -u(s,v_0)\|_{X_{1+\varep}}.
\end{align*}
Then,
\begin{align*}
	&\sup_{0<s\leq t}s^{\zeta_g\varep}\|u(s;u_0) -u(s,v_0)\|_{X_{1+\varep}}\leq M\|u_0-v_0\|_{X_{1}} \\
	&+\frac{3}{4}\sup_{0<s\leq t}s^{\zeta_g\varep}\|u(s;u_0) -u(s,v_0)\|_{X_{1+\varep}}
\end{align*}
and so
$$\sup_{0<s\leq t}s^{\zeta_g\varep}\|u(s;u_0) -u(s,v_0)\|_{X_{1+\varep}}\leq4 M\|u_0-v_0\|_{X_{1}}.$$
Then, by \eqref{fgt}, we have
\begin{align*}
	t^{\zeta_g\theta}\|u(t;u_0)-u(t;v_0)\|_{X_{1+\theta}}\leq&\,
	M\|u_0-v_0\|_{X_1}\\
	&+m(\theta,0,\tau_0)\left(2\mu^{\rho-1}+\nu(\tau_0)\right)\left(4M\|u_0-v_0\|_{X_{1}} \right)\\
	\leq&\, L(\theta,x_0)\|u_0-v_0\|_{X_1},
\end{align*}
where
$$L(\theta,x_0)= 	M+m(\theta,0,\tau_0)\left(2\mu^{\rho-1}+\nu(\tau_0)\right)4M.$$

\noindent\textbf{Proof of {(C)}:} Let $\tau_1>0$ and $v:[0,\tau_1]\to X_1$ be an $\varep$-regular mild solution on $[0,\tau_1]$ for the Problem \eqref{abstract equation in introduction} starting in some $u_0 \in B_{X_1}[x_0,r]$ and satisfying
$$\lim_{t\to0^+} t^{\zeta_g\varep}\|v(t)\|_{X_{1+\varep}}=0.$$
Then, we can choose $\tau \in (0,\min\{\tau_1,\tau_0\}]$ small enough so that
$$\sup_{0<t\leq  \tau}t^{\zeta_g\varep}\|v(t)\|_{X_{1+\varep}}\leq \mu.$$
Therefore, the restrictions of the functions $v$ and $u(\cdot\,;u_0)$ to the interval $(0, \tau]$ belong to the set
$$\mathcal{K}(\tau,\mu):=\left\lbrace \phi \in C\left((0, \tau];X_{1+\varep
}\right);\, \sup_{0<t\leq  \tau}t^{\zeta_g\varep}\|\phi(t)\|_{X_{1+\varep}}\leq \mu\right\rbrace$$
and they are fixed points of the map $\widetilde{T}$ defined on $\mathcal{K}(\tau,\mu)$ by
$$(\widetilde{T}\phi)(t)=S(t)u_0+\int_0^t S(t-s)f(s, \phi (s))ds, \quad t \in (0, \tau].$$
We note that $\tau_0>0$ was taken arbitrarily small, so the proof that $\widetilde{T}$ is a contraction on $\mathcal{K}(\tau,\mu)$ is entirely analogous to the proof for $T$ on ${\mathcal{K}(\tau_0,\mu)}$. Then, by the uniqueness of fixed points for $\widetilde{T}$ on $\mathcal{K}(\tau,\mu)$, we have that $v(t)=u(t;u_0)$ for all $t \in (0, \tau]$. Actually, since $v(0)=u_0=u(0,u_0)$, the equality holds on $[0, \tau]$. On the other hand, denoting
$$e:=\sup_{s \in \left[ \tau, \min\{ \tau_1,\tau_0\}\right]}\left(\|v(s)\|_{X_{1+\varep}}^{\rho-1}+\|u(s;u_0)\|_{X_{1+\varep}}^{\rho-1}+\nu(s)s^{-1+\zeta_g(1-\gamma(\varep))}\right)<\infty,$$
for each $t \in [ \tau,\min\{ \tau_1,\tau_0\}]$, we have
\begin{align*}
	&\|v(t)-u(t;u_0)\|_{X_{1+\varep}}\leq \int_{0}^{t}\left\|S(t-s)f\left(s,v(s)\right)-S(t-s)f\left(s,u(s;u_0)\right)\right\|_{X_{1+\varep}}ds\\
	\leq&\, \int_{0}^{t}M(t-s)^{-\zeta_g(1+\varep-{\gamma(\varep)})}\|f(s,v(s))-f(s,u(s;u_0))\|_{X_{\gamma(\varep)}}ds\\
	\leq&\,
	Mc\int_{ \tau}^{t}(t-s)^{-\zeta_g(1+\varep-{\gamma(\varep)})}\|v(s)-u(s;u_0)\|_{X_{1+\varep}}\\
	&\times\left(\|v(s)\|_{X_{1+\varep}}^{\rho-1}+\|u(s;u_0)\|_{X_{1+\varep}}^{\rho-1}+\nu(s)s^{-1+\zeta_g(1-\gamma(\varep))}\right)ds\\
	\leq&\,
	Mce\int_{ \tau}^{t}(t-s)^{-\zeta_g(1+\varep-{\gamma(\varep)})}\|v(s)-u(s;u_0)\|_{X_{1+\varep}}ds.
\end{align*}
It follows from Singular Grönwall's Inequality \ref{Grönwall's inequality} that
$$\|v(t)-u(t;u_0)\|_{X_{1+\varep}}=0, \quad \forall \,t \in[\tau,\min\{\tau_1,\tau_0\}].$$
Therefore, $v(t)=u(t;u_0)$ for all $t \in [0,\min\{ \tau_1,\tau_0\}]$, as (C) states.
\\\\\noindent\textbf{Proof of {(D)}:} 
If $\zeta_g< (1-\gamma(\varep)+\rho\varep)^{-1}$, then
$$\lim_{t\to0^+}t^{1-\zeta_g(1-\gamma(\varep)+\rho\varep)}=0.$$
Hence, we can choose $\mu>0$ arbitrarily large (and $r=\mu/(8M)$ will be large as well) and $\tau_0 \in (0,1)$ small enough such that \eqref{gyg}, \eqref{ggg}, and \eqref{gvg} remain true.

\end{proof}
\subsection{Continuation}
If $u$ and $v$ are $\varep$-regular mild solutions for the problem \eqref{abstract equation in introduction}  satisfying
$$\lim_{t \to 0^+}t^{\zeta_g\varep}\|u(t)\|_{X_{1+\varep}}=\lim_{t \to 0^+}t^{\zeta_g\varep}\|v(t)\|_{X_{1+\varep}}=0,$$
then it follows from Theorem \ref{theorem critic case well-posedness} that $u=v$ on some sufficiently small interval $[0,\tau]$. However, we still do not know if a type of "bifurcation" is possible along the time of existence of such solutions, in the sense that, $u$ and $v$ exist and are distinct beyond $[0,\tau]$. We prove next that the answer to this question is no. Before, we give the following definition.

\begin{de}[Continuation]
	Let $u \in C([0,\tau];X_1)\cap C((0,\tau];X_{1+\varep})$ be a $\varep$-regular mild solution for the problem \eqref{abstract equation in introduction}. Given $\tau_1>\tau$, a function $v \in C([0,\tau_1];X_1)\cap C((0,\tau_1];X_{1+\varep})$ is said to be a \emph{continuation on $[0,\tau_1]$} of $u$ if $\widetilde{u}$ is also an $\varep$-regular mild solution to \eqref{abstract equation in introduction} and $\widetilde{u}(t)=u(t)$, for all $t \in [0,\tau]$.
\end{de}
\begin{theorem}[Continuation result]\label{theorem continuation critic case}
	Let $u$ be an $\varep$-regular mild solution for the problem \eqref{abstract equation in introduction} on an interval $[0,\tau]$, for some $\tau>0$. If $u$ satisfies
	$$\lim_{t \to 0^+}t^{\zeta_g\varep}\|u(t)\|_{X_{1+\varep}}=0,$$
	then there exist $\tau_1>\tau$ and a continuation $\bar{u} \in C([0,\tau_1];X_1)\cap C((0,\tau_1];X_{1+\varep})$ of $u$ that is unique on the interval $[0,\tau']$, for any $\tau' \in [\tau,\tau_1]$.	
\end{theorem}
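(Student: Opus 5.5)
The plan is to build the continuation by a fixed point argument on a short interval $[\tau,\tau_1]$ that keeps the whole past of $u$ frozen. There is no semigroup property for $\{S(t)\}_{t\geq 0}$, so we cannot simply restart the problem at time $\tau$ with initial datum $u(\tau)$; the memory term over $[0,\tau]$ has to be carried along explicitly. For $t\ge\tau$ set
$$w(t):=S(t)u_0+\int_0^\tau S(t-s)f(s,u(s))\,ds .$$
For $\tau_1>\tau$ and $\delta>0$ (to be chosen) consider
$$\mathcal{M}=\Big\{v\in C([\tau,\tau_1];X_{1+\varep});\ v(\tau)=u(\tau),\ \sup_{\tau\le t\le\tau_1}\|v(t)-u(\tau)\|_{X_{1+\varep}}\le\delta\Big\},$$
with the sup-metric; it is complete. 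On $\mathcal{M}$ define $(\mathcal{T}v)(t)=w(t)+\int_\tau^t S(t-s)f(s,v(s))\,ds$. Given a fixed point $v$, the function $\bar u=u$ on $[0,\tau]$ and $\bar u=v$ on $[\tau,\tau_1]$ is by construction a mild solution on $[0,\tau_1]$. It is $\varep$-regular, since $u\in C((0,\tau];X_{1+\varep})$, $v$ is continuous, and the two pieces agree at $\tau$ because $(\mathcal T v)(\tau)=w(\tau)=u(\tau)$. Continuity into $X_1$ on $[0,\tau_1]$ is inherited from $X_{1+\varep}\hookrightarrow X_1$ away from $0$.

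\textbf{Continuity of $w$ and of the integral term.} First I show $w\in C([\tau,\tau_1];X_{1+\varep})$. That $S(\cdot)u_0$ is continuous on $[\tau,\tau_1]$ follows from Theorem \ref{Theorem delta S from gamma to 1+theta}. For the integral $\int_0^\tau S(t-s)f(s,u(s))ds$ I repeat the estimate \eqref{qaqa}, now with the upper limit fixed at $\tau$ and with $t_0<t_1$ in $[\tau,\tau_1]$. This step needs
$$\sup_{0<s\le\tau}s^{\zeta_g\varep}\|u(s)\|_{X_{1+\varep}}<\infty.$$
That bound comes from the hypothesis $\lim_{t\to0^+}t^{\zeta_g\varep}\|u(t)\|_{X_{1+\varep}}=0$ together with continuity of $u$ on $[\eta,\tau]$ for every $\eta>0$. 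With it, the bound $\|f(s,u(s))\|_{X_{\gamma(\varep)}}\le c(\mu^\rho s^{-\zeta_g\varep\rho}+\nu(s)s^{-1+\zeta_g(1-\gamma(\varep))})$ is integrable against the kernels. Lemma \ref{Lemma I kappa goes to zero as kappa goes to 1+} then gives continuity, because the logarithmic factor $\ln\big((t_1-s)/(t_0-s)\big)$ tends to $0$. Continuity of $t\mapsto\int_\tau^t S(t-s)f(s,v(s))ds$ is the same computation as \eqref{2s poi}--\eqref{2t poi}. Here it is easier, since $s\ge\tau>0$ keeps the factors $s^{-\zeta_g\varep\rho}$ and $\nu(s)s^{-1+\zeta_g(1-\gamma(\varep))}$ bounded.

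\textbf{Invariance and contraction.} On $[\tau,\tau_1]$ we have $\|v(s)\|_{X_{1+\varep}}\le\|u(\tau)\|_{X_{1+\varep}}+\delta=:K$. Using Theorem \ref{Theorem S from gamma to 1+theta} with $\theta=\varep$ and the growth and Lipschitz bounds on $f$,
$$\Big\|\int_\tau^tS(t-s)f(s,v(s))ds\Big\|_{X_{1+\varep}}\le Mc\,(K^\rho+C_\tau)\,\frac{(t-\tau)^{1-\zeta_g(1+\varep-\gamma(\varep))}}{1-\zeta_g(1+\varep-\gamma(\varep))},$$
where $C_\tau=\sup_{s\in[\tau,\tau_1]}\nu(s)s^{-1+\zeta_g(1-\gamma(\varep))}$. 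The exponent is positive because $\gamma(\varep)\ge 1-1/\zeta_g+\rho\varep>1-1/\zeta_g+\varep$. The Lipschitz difference is handled the same way, with the factor $2K^{\rho-1}+\sup_{s\in[\tau,\tau_1]}\nu(s)s^{-1+\zeta_g(1-\gamma(\varep)+\varep)}$. Equivalently, one can invoke Lemma \ref{lemma critical case} with $a=\tau$, $b=\tau_1$, using that $m(\varep,\tau,\tau_1)\to0$ as $\tau_1\to\tau^+$ since the incomplete Beta functions $\textbf{B}_{1-\tau/\tau_1}$ vanish. Finally, continuity of $w$ at $\tau$ gives $\|w(t)-u(\tau)\|_{X_{1+\varep}}\le\delta/2$ for $\tau_1$ close to $\tau$. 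Choosing $\delta=1$ and then $\tau_1$ close enough to $\tau$ makes $\mathcal{T}$ map $\mathcal{M}$ into itself with contraction constant $1/2$. The Banach fixed point theorem then yields $v$.

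\textbf{Uniqueness.} Let $\tilde u$ be another continuation on $[0,\tau']$ with $\tau'\le\tau_1$. By the definition of continuation, $\tilde u=u=\bar u$ on $[0,\tau]$. Hence, for $t\in[\tau,\tau']$,
$$\|\tilde u(t)-\bar u(t)\|_{X_{1+\varep}}\le Mc\,e\int_\tau^t(t-s)^{-\zeta_g(1+\varep-\gamma(\varep))}\|\tilde u(s)-\bar u(s)\|_{X_{1+\varep}}ds .$$
The constant $e$ is finite because both functions are continuous into $X_{1+\varep}$ on the compact set $[\tau,\tau']$ away from $0$. Lemma \ref{Grönwall's inequality} gives $\tilde u=\bar u$ there, exactly as in the proof of (C) in Theorem \ref{theorem critic case well-posedness}.

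\textbf{Main obstacle.} I expect the delicate point to be the continuity of the history term $\int_0^\tau S(t-s)f(s,u(s))ds$ into $X_{1+\varep}$ at $t=\tau^+$. It involves the singularity at $s=0$ of $f(s,u(s))$, which is controlled only through the weighted bound, together with the singularity at $s=\tau$ of the kernel. I would handle it by splitting $[0,\tau]$ into $[0,\tau/2]$, where $t-s\ge\tau/2$ and the kernel is bounded, and $[\tau/2,\tau]$, where $f(s,u(s))$ is bounded. On each piece I would apply the logarithmic estimate of Theorem \ref{Theorem delta S from gamma to 1+theta} together with Lemma \ref{Lemma I kappa goes to zero as kappa goes to 1+}.
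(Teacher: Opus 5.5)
Your proposal is correct and is essentially the paper's argument. The paper runs the Banach fixed point on the set $\mathcal{S}$ of functions equal to $u$ on $(0,\tau]$, with the weighted norm $\sup_{t} t^{\zeta_g\varep}\|\cdot\|_{X_{1+\varep}}$ and Lemma \ref{lemma critical case} for $a=\tau$, $b=\tau_1$; on $[\tau,\tau_1]$ this is exactly your map $\mathcal{T}$ with the history term $w$ split off (the weight is harmless since $t\geq\tau>0$), and the continuity of the history term at $t=\tau^+$ via the logarithmic estimate with Lemma \ref{Lemma I kappa goes to zero as kappa goes to 1+}, as well as uniqueness via the singular Gr\"onwall inequality, are handled just as in the paper.
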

\begin{proof}
Let us denote
$$\eta=\max\left\lbrace\sup_{0<s\leq \tau}s^{\zeta_g\varep}\|u(s)\|_{X_{1+\varep}}\,,\,\,\,1+(\tau+1)^{\zeta_g\varep}\|u(\tau)\|_{X_{1+\varep}}\right\rbrace.$$
Furthermore, as we have been doing, for each $\theta \in [0,\gamma(\varep)-1+1/\zeta_g)$ and $0\leq a <b<\infty$, we denote by $m(\theta,a,b)$ the biggest of the constants
$$Mc \textbf{B}_{1-a/b}(1-\zeta_g\varep\rho,1-\zeta_g(1+\theta-\gamma(\varep)))$$
and	$$Mc\textbf{B}_{1-a/b}\left(\zeta_g(1-\gamma(\varep)),1-\zeta_g(1+\theta-\gamma(\varep))\right).$$
We note that $m(\theta,a,b)\to 0$ as $a/b\to 1^{-}$.	Therefore, we can choose $\tau_1 \in (\tau,\tau+1]$ arbitrarily close of $\tau$ such that
\begin{equation*}\label{qqr}
	m(\varep,\tau,\tau_1)\left(\eta^\rho t^{1-\zeta_g(1-\gamma(\varep)+\rho\varep)} +\nu(t)\right)\leq \frac{1}{4}
\end{equation*}
and
\begin{equation*}\label{qqt}
	m(\varep,\tau,\tau_1)\left(2\eta^{\rho-1}t^{1-\zeta_g(1-{\gamma(\varep)}+\rho\varep)}+\nu(\tau_1)\right)\leq \frac{1}{2},
\end{equation*}
for $t \in [\tau,\tau_1]$. Further, we can show that
$$t^{\zeta_g\varep}\left\|\int_0^\tau\left(S(t-s)-S(\tau-s)\right)f(s,u(s))ds\right\|_{X_{1+\varep}}\to 0,$$
as $t\to \tau^+$, just performing the same computations we did in Theorem \ref{theorem critic case well-posedness} to estimate \eqref{2s poi}.
Hence we can also suppose $\tau_1 \in (\tau,\tau+1]$ is taken such that
\begin{equation*}\label{qqw}
	t^{\zeta_g\varep}\left\|\int_0^\tau\left(S(t-s)-S(\tau-s)\right)f(s,u(s))ds\right\|_{X_{1+\varep}}\leq \frac{1}{4}
\end{equation*}
and
\begin{equation*}\label{qqq}
	t^{\zeta_g\varep}\|S(t)u_0 - S(\tau)u_0\|_{X_{1+\varep}}\leq \frac{1}{2},
\end{equation*}
for $t \in [\tau,\tau_1]$.

\noindent{\bf Existence:}	Let $\mathcal{S}$ be the set of all functions $\phi \in C((0,\tau_1];X_{1+\varep})$ such that $\phi(t)=u(t)$, for $t \in (0,\tau]$, and
$$\sup_{t \in [\tau, \tau_1]} t^{\zeta_g\varep}\|\phi(t)-u(\tau)\|_{X_{1+\varep}}\leq 1.$$
For each $\phi \in \mathcal{S}$, we have
\begin{align*}
	&\sup_{t \in (0,\tau_1]}t^{\zeta_g\varep}\|\phi(t)\|_{X_{1+\varep}}\leq \max\left\lbrace \sup_{t \in (0,\tau]} t^{\zeta_g\varep}\|\phi(t)\|_{X_{1+\varep}}\,,\, \sup_{t \in [\tau,\tau_1]} t^{\zeta_g\varep}\|\phi(t)\|_{X_{1+\varep}}\right\rbrace\\
	\leq & \max\left\lbrace\sup_{t \in (0,\tau]}t^{\zeta_g\varep}\|u(t)\|_{X_{1+\varep}}\,,\,\,\,\sup_{t \in [\tau, \tau_1]} t^{\zeta_g\varep}\|\phi(t)-u(\tau)\|_{X_{1+\varep}}+ \tau_1^{\zeta_g\varep}\|u(\tau)\|_{X_{1+\varep}}\right\rbrace\\
	\leq& \max\left\lbrace\sup_{t \in (0,\tau]}t^{\zeta_g\varep}\|u(t)\|_{X_{1+\varep}}\,,\,\,\,1+(\tau+1)^{\zeta_g\varep}\|u(\tau)\|_{X_{1+\varep}}\right\rbrace=\eta.
\end{align*}
Thereby, $\mathcal{S}\subset \mathcal{K}(\tau_1,\eta)$. It is not difficult to verify that $\mathcal{S}$ is closed in $\mathcal{K}(\tau_1,\eta)$,  and hence $\mathcal{S}$ is complete under the metric
$$d(\phi,\psi):=\|\phi-\psi\|_S := \|\phi-\psi\|_{\mathcal{K}(\tau_1,\eta)}=\sup_{0<t\leq  \tau_1}t^{\zeta_g\varep}\|\phi(t)-\psi(t)\|_{X_{1+\varep}}.$$
On $\mathcal{S}$, consider the map $T$ given by
$$(T\phi)(t)=S(t)u_0+\int_{0}^t S(t-s)f(s,\phi(s))ds, \quad t \in (0,\tau_1].$$
Analogously to what we did in Theorem \ref{theorem critic case well-posedness}, one can prove that $T\phi \in C((0,\tau_1];X_{1+\theta}))$, for all $\phi \in \mathcal{K}(\tau_1,\eta)$ and any $\theta \in [0,\gamma(\varep)-1+1/\zeta_g)$. In particular, if $\phi \in \mathcal{S}$, then $T\phi \in C((0,\tau_1];X_{1+\varep}))$. Moreover, for $t \in (0,\tau]$, we have $(T\phi)(t)=(Tu)(t)=u(t)$. Thus, to prove that $T\phi \in \mathcal{S}$, we just need to check that
\begin{equation}\label{dre}
	\sup_{t \in [\tau, \tau_1]} t^{\zeta_g\varep}\|(T\phi)(t)-u(\tau)\|_{X_{1+\varep}}\leq 1.
\end{equation}
Given $t \in [\tau,\tau_1]$, by Lemma \ref{lemma critical case}, we have
\begin{align*}
	&t^{\zeta_g\varep}\|(T\phi)(t)-u(\tau)\|_{X_{1+\varep}}\leq t^{\zeta_g\varep}\|S(t)u_0 - S(\tau)u_0\|_{X_{1+\varep}}\\
	&+t^{\zeta_g\varep}\left\| \int_0^t S(t-s)f(s,\phi(s))ds+\int_0^\tau S(\tau-s)f(s,u(s))ds\right\|_{X_{1+\varep}} \\
	& \leq \, \frac{1}{2}+t^{\zeta_g\varep}\left\|\int_0^\tau\left(S(t-s)-S(\tau-s)\right)f(s,u(s))ds\right\|_{X_{1+\varep}}+t^{\zeta_g\varep}\left\| \int_\tau^t S(t-s)f(s,\phi(s))ds\right\|_{X_{1+\varep}}\\
	&\leq\, \frac{1}{2}+\frac{1}{4}
	+m(\varep,\tau,\tau_1)\left(\left(\sup_{0<s\leq t}s^{\zeta_g\varep}\|\phi(s)\|_{X_{1+\varep}}\right)^\rho t^{1-\zeta_g(1-\gamma(\varep)+\rho\varep)} +\nu(t)\right)\\
	&\leq\, \frac{1}{2}+\frac{1}{4}
	+m(\varep,\tau,\tau_1)\left(\eta^\rho t^{1-\zeta_g(1-\gamma(\varep)+\rho\varep)} +\nu(t)\right)	\leq 1,
\end{align*}
from which \eqref{dre} follows. Therefore, $T(\mathcal{S})\subset \mathcal{S}$.

Now, if $\phi , \psi \in \mathcal{S}$ and $t \in [\tau,\tau_1]$, then
\begin{align*}
	&t^{\zeta_g\varep}\|(T\phi)(t)-(T\psi)(t)\|_{X_{1+\varep}}=t^{\zeta_g\varep}\left\|\int_{\tau}^t S(t-s)\left(f(s,\phi (s))-f(s,\psi(s))\right)ds\right\|_{X_{1+\varep}} \\
    &\leq\,
	m(\varep,\tau,\tau_1)\left(2\eta^{\rho-1}t^{1-\zeta_g(1-{\gamma(\varep)}+\rho\varep)}+\nu(\tau_1)\right) \sup_{0<s\leq t}s^{\zeta_g\varep}\|\phi(s)-\psi(s)\|_{X_{1+\varep}}\\
	&\leq\,
	\frac{1}{2}\sup_{0<s\leq t}s^{\zeta_g\varep}\|\phi(s)-\psi(s)\|_{X_{1+\varep}}.
\end{align*}
Consequently
$$\|T\phi-T\psi\|_S \leq \frac{1}{2}\|\phi - \psi\|_\mathcal{S}.$$
Then $T$ is a strict contraction on $\mathcal{S}$ and, by the Banach fixed point theorem, it has a unique fixed point $\phi_0 \in \mathcal{S}$. The function $\widetilde{u}$ defined by $\widetilde{u}(t)=\phi_0(t)$, for $t \in (0,\tau_1]$, and $\widetilde{u}(0)=u_0$, is a continuation of $u$ on $[0,\tau_1]$.

\noindent{\bf Uniqueness:} 
Let us suppose $u$ admits another continuation $v$ which is defined on some sub-interval $[0,\tau']\subset[0,\tau_1]$ and let
$$e:=\sup_{s \in [\tau,\tau']} \left(\|v(s)\|_{X_{1+\varep}}^{\rho-1}+\|\widetilde{u}(s)\|_{X_{1+\varep}}^{\rho-1}+\nu(s)s^{-1+\zeta_g(1-\gamma(\varep))}\right).$$
For each $t \in [\tau
,\tau']$, we have
\begin{align*}
	&\|v(t)-\widetilde{u}(t)\|_{X_{1+\varep}}\leq \int_{0}^{t}\left\|S(t-s)f\left(s,v(s)\right)-S(t-s)f\left(s,\widetilde{u}(s)\right)\right\|_{X_{1+\varep}}ds\\
	\leq&\, \int_{0}^{t}M(t-s)^{-\zeta_g(1+\varep-{\gamma(\varep)})}\|f(s,v(s))-f(s,\widetilde{u}(s))\|_{X_{\gamma(\varep)}}ds\\
	\leq&\,
	Mc\int_{ \tau}^{t}(t-s)^{-\zeta_g(1+\varep-{\gamma(\varep)})}\|v(s)-\widetilde{u}(s)\|_{X_{1+\varep}}\left(\|v(s)\|_{X_{1+\varep}}^{\rho-1}+\|\widetilde{u}(s)\|_{X_{1+\varep}}^{\rho-1}+\nu(s)s^{-1+\zeta_g(1-\gamma(\varep))}\right)ds\\
	\leq&\,
	Mce\int_{ \tau}^{t}(t-s)^{-\zeta_g(1+\varep-{\gamma(\varep)})}\|v(s)-\widetilde{u}(s)\|_{X_{1+\varep}}ds.
\end{align*}
Hence, the uniqueness follows from Singular Grönwall's Inequality, Lemma \ref{Grönwall's inequality}.
\end{proof}

\begin{remark}
	We note that the result above solves the issue of possible bifurcations. Indeed, let $u$ and $v$ be $\varep$-regular mild solutions defined on $[0,\tau(u)]$ and $[0,\tau(v)]$, respectively, satisfying
	$$\lim_{t \to 0^+}t^{\zeta_g\varep}\|u(t)\|_{X_{1+\varep}}=\lim_{t \to 0^+}t^{\zeta_g\varep}\|v(t)\|_{X_{1+\varep}}=0.$$
	We claim that $u=v$ on $[0,\min\{\tau(u),\tau(v)\}]$. Indeed, suppose not. Then, denoting
	$$I:=\{t \in [0,\min\{\tau(u),\tau(v)\}];\, u=v \textrm{ on }[0,t]\}, \quad \tau:=\max(I),$$
	and $w(t):=u(t)=v(t),$ for $t \in [0,\tau]$, we have that
	$$0<\tau<\min\{\tau(u),\tau(v)\}$$
	and $w$ is an $\varep$-regular mild solution on $[0,\tau]$ for the problem \eqref{abstract equation in introduction}. From Theorem \ref{theorem continuation critic case}, there is $\tau_1>\tau$ and a continuation $\bar{w}$ on $[0,\tau_1]$ of $w$. Let
	$\tau'=\min\{\tau_1,\tau(u),\tau(v)\}.$
	Since $\tau'>\tau$, we have that $\tau' \notin I$. On the other hand, by Theorem \ref{theorem continuation critic case}, $\bar{w}$ is the unique continuation on $[0,\tau']$ of $w$. Then, $u=\bar{w}=v \textrm{ on }[0,\tau']$, whence $\tau' \in I$, which is a contradiction.
\end{remark}

\begin{remark}
In this subsection, we addressed only the continuation of solutions in the critical case. Nevertheless, with some adjustments, a similar result can be developed in the {subcritical} framework. 
\end{remark}

\subsection{Blow-up alternative}

If $u$ is an $\varep$-regular mild solution on some interval $[0,\tau_0]$ to  \eqref{abstract equation in introduction}, then, by Theorem \ref{theorem continuation critic case}, the interval
$$\{ \tau> \tau_0\,; u \textrm{ admits a continuation on }[0,\tau] \}$$
is not empty. Hence, the following definition is consistent.

\begin{de}[Maximal time of existence]
	Let $u$ be an $\varep$-regular mild solution on $[0,\tau_0]$ to the Problem \eqref{abstract equation in introduction}. The number $\tau_{max} \in (\tau_0,\infty]$ defined by
	$$\tau_{max} := \sup \{ \tau> \tau_0\,; u \textrm{ admits a continuation on }[0,\tau] \} $$
	is said to be its {maximal time of existence}.
\end{de}

The next theorem is our {blow-up alternative} to \eqref{abstract equation in introduction}.
\begin{theorem}[Blow-up alternative]
	Let $u$ be the $\varep$-regular mild solution for the problem \eqref{abstract equation in introduction} that satisfies
	$$\lim_{t \to 0^+}t^{\zeta_g\varep}\|u(t)\|_{X_{1+\varep}}=0.$$ If its maximal time of existence $\tau_{\textrm{max}}>0$ is finite, then
	$$\limsup_{t \to \tau_{\textrm{max}}\,^-}\|u(t)\|_{X_{1+\varep}}= \infty.$$
\begin{proof}
Let us suppose by contradiction that $\tau_\textrm{max}<\infty$ and  $\limsup_{t \to \tau_{\textrm{max}}\,^-}\|u(t)\|_{X_{1+\varep}}< \infty.$
Then,
$$Q:=\sup_{t \in (0,\tau_{\textrm{max}})} t^{\zeta_g\varep}\|u(t)\|_{X_{1+\varep}}<\infty.$$ 
Let $\{t_n\}_{n \in \mathbb{N}} \subset (0,\tau_{\textrm{max}})$ be a sequence with $\lim_{n \to \infty}t_n = \tau_{\textrm{max}}.$ We claim that $\{u(t_n)\}_{n \in \mathbb{N}} \subset X_{1+\varep}$ is a Cauchy sequence. In fact, let $m,n$ in $\mathbb{N}$; without loss of generality, let us suppose $t_m<t_n$. Then
\begin{align*}
			&\|u(t_m)-u(t_n)\|_{X_{1+\varep}}\leq \|S(t_m)u_0-S(t_n)u_0\|_{X_{1+\varep}}\nonumber\\
			&+\left\|\int_0^{t_m}S(t_m-s)f(s,u(s))ds-\int_0^{t_n}S(t_n-s)f(s,u(s))ds\right\|_{X_{1+\varep}}\nonumber\\
			&\leq \,
			\|S(t_m)u_0-S(t_n)u_0\|_{X_{1+\varep}}
			+ \left\|\int_0^{t_m}\left(S(t_m-s)-S(t_n-s)\right)f(s,u(s))ds\right\|_{X_{1+\varep}}\nonumber\\
			&+
			\left\|\int_{t_m}^{t_n}S(t_n-s)f(s,u(s))ds\right\|_{X_{1+\varep}}.
\end{align*}
By Theorem \ref{Theorem delta S from gamma to 1+theta}, the first summand above goes to $0$ as $m,n \to \infty$. Furthermore, doing the same computations as we did in \eqref{qaqa} and \eqref{qsqs}, in the proof of Theorem \ref{theorem critic case well-posedness}, one can show that the second and third summands also go to $0$ as $m,n\to \infty$. Therefore, $\{u(t_n)\}_{n \in \mathbb{N}} \subset X_{1+\varep}$ is indeed a Cauchy sequence.

The arbitrariness of $\{t_n\}_{n \in \mathbb{N}}$ ensures that $\lim_{t \to \tau_{\textrm{max}}\,\!\!^-} u(t)$ exists in $X_{1+\varep}$, so we can extend $u$ on $[0,\tau_{\textrm{max}}]$ defining $$u(\tau_{\textrm{max}})= \lim_{t \to \tau_{\textrm{max}}\,^-}u(t),$$
which contradicts the Theorem \ref{theorem continuation critic case}.

\end{proof}
\end{theorem}

\begin{remark}
As in the previous subsection, we have focused exclusively on blow-up of solutions in the critical case. However, a similar result can be developed in the {subcritical} setting, as well.
\end{remark}

\section{Applications}\sectionmark{Applications}\label{chapter aplications}
This section focuses on applying our abstract results to specific problems. Although we will consider these applications in the critical setting, we want to emphasize that our study on the subcritical framework can be applied to the same examples. The prototype of material function we consider has the form
\begin{equation}\label{kernelappendix}
	g(t)=\sum_{i=1}^n k_it^{\alpha_i -1}e^{c_i t}, \quad t>0,
\end{equation}
where $k_i>0, \alpha_i>0, c_i \in \mathbb{R}.$ We mention that many types of materials, as the {Hookean solid}, the {Maxwell fluid}, the {Poynting-Thompson solid}, and the {power type materials}, are described by material functions of the form \eqref{kernelappendix}, see \cite[Chapter 5]{pruss2013evolutionary} with $a(t)=(1\ast g)(t)$, $t>0$. In the Appendix we prove that this type of material function verifies the conditions of Theorem \ref{generation + regularity}.

We recall the definition of fractional order Sobolev spaces, also called spaces of Bessel potentials, that will be used in the following subsections. Let \(\Omega\) be a bounded, smooth domain in \(\mathbb{R}^N\). For $l>0$ and $1 \leq p \leq \infty$, we denote by $W^{l,p}(\Omega)$ the Sobolev space of order $l$. It can be defined via complex interpolation as
$$
W^{l,p}(\Omega)
:=
[L^p(\Omega), W^{m,p}(\Omega)]_{l/m},
$$
 where $m$ is the smallest integer strictly greater than $l$, and $W^{m,p}(\Omega)$ denotes the standard Sobolev space of integer order $m$.
We also recall the well known Sobolev embeddings 
\begin{equation}\label{sobolev embeddings}
	W^{l_1,p_1}(\Omega)\hookrightarrow W^{l_2,p_2}(\Omega), \qquad \quad \frac{l_1}{N}-\frac{1}{p_1}\geq \frac{l_2}{N}-\frac{1}{p_2}, \quad 1<p_1\leq p_2<\infty.
\end{equation}
We denote by $W^{l,p}(\Omega;\R^N)$ the Sobolev spaces of vector-valued functions and, for the sake of simplicity, write $W^{l,p}(\Omega)$ in the scalar case.

\subsection[Navier-Stokes equations with hereditary viscosity]{Navier-Stokes equations with hereditary viscosity%
	\sectionmark{Navier-Stokes}}
\sectionmark{Navier-Stokes}

Let $\Omega \subset \mathbb{R}^N$, $N\geq 3$, be a bounded smooth domain and consider the  Navier-Stokes system with {hereditary viscosity}
\begin{equation}\label{problem ns}
	\left\{
	\begin{array}{lll}
		u_t + (u\cdot \nabla) u - \dint_0^t g(t-s)\Delta u(x,s)\,ds = -\nabla p + h, \quad \textrm{in } \Omega\times(0,\infty), \\
		\mathrm{div}(u)=0 \quad \textrm{in } \Omega\times (0,\infty), \\
		u(x,t)=0, \quad (x,t) \in \partial \Omega\times (0,\infty), \\
		u(x,0)=u_0(x), \quad x \in \Omega.
	\end{array}\right.
\end{equation}
Here, $u:\Omega \times (0,\infty) \to \mathbb{R}^N$ describes the velocity field of a fluid contained in $\Omega$, $p$ is the fluid pressure, $h$ is an external force, $u_0:\Omega \to \mathbb{R}^N$ is the velocity field at the initial time $t=0$, and $g:[0,\infty)\to \mathbb{R}$ represents a {material function}. Such models arise in the dynamics of non-Newtonian fluids or as viscoelastic models for the dynamics of turbulence statistics in Newtonian fluids and also is studied in \cite{barbu2003navier, deACD, andrade2021viana2021silva, mohan2019} by different approaches.

It is well known that, for $1<q<\infty$, the following algebraic and topological direct sum decomposition holds:
$$
L^q(\Omega;\mathbb{R}^N)
=
L^q_\sigma(\Omega;\mathbb{R}^N)
\oplus
\{\nabla \pi \,;\,\pi \in W^{1,q}_{\mathrm{loc}}(\Omega;\mathbb{R}), \, \nabla \pi \in L^{q}(\Omega;\mathbb{R}^N)\},
$$
where $L^q_\sigma(\Omega;\mathbb{R}^N)$ denotes the closure of
$$
\{u \in C^\infty_0(\Omega;\mathbb{R}^N) \,;\, \mathrm{div}(u)=0\}
$$
in $L^q(\Omega;\mathbb{R}^N)$, see \cite{fujiwara1977helmholtz}.
This is the so-called {Helmholtz decomposition} of $L^q(\Omega;\mathbb{R}^N)$.
Let $P$ denote the projection operator from $L^q(\Omega;\mathbb{R}^N)$ onto $L^q_\sigma(\Omega;\mathbb{R}^N)$. Applying $P$ to \eqref{problem ns}, we obtain
$$
 u_t + P(u\cdot \nabla) u
-
\int_0^t g(t-s) P\Delta u(x,s)\,ds
=
Ph(x,t),
\quad (x,t) \in \Omega\times(0,\infty).
$$
Setting $L=-P\Delta$, $u(t)=u(\cdot,t)$, and
$f(t,u(t))=-P(u(t)\cdot\nabla u(t))+Ph(\cdot,t)$,
we rewrite \eqref{problem ns} as
$$
u'(t)
=
\int_0^t g(t-s)Lu(s)\,ds
+
f(t,u(t)),
\quad t>0.
$$
The linear operator $L$ is the {Stokes operator}, and its domain is given by
$$
\mathcal{D}(L)
=
W^{2,q}(\Omega;\mathbb{R}^N)
\cap
W^{1,q}_0(\Omega;\mathbb{R}^N)
\cap
L^q_\sigma(\Omega;\mathbb{R}^N).
$$
On $L^q_\sigma(\Omega;\mathbb{R}^N)$, the operator $L$ is sectorial of angle $0$ and satisfies $0 \in \rho(L)$; see
\cite{giga1981analyticity,ladyzhenskaya1969viscous}. 
Its associated scale of {fractional power spaces} $\{E^\alpha_q\}_{\alpha \in \mathbb{R}}$ is preserved by the complex family of interpolation functors, that is,
\begin{equation}\label{BIP property}
	E_q^{(1-\theta)\alpha+\theta\beta}
	=
	[E^\alpha_q,E_q^\beta]_{\theta},
	\quad
	-\infty<\alpha\leq \beta<\infty,
	\ \theta \in (0,1),
\end{equation}
see \cite{giga1985domains,seeley1971norms} and \cite[Theorem~1.5.4]{amann1995linear}, and it satisfies the following duality property:
$$
\left(E^\alpha_q\right)^\ast
=
E^{-\alpha}_{q'},
$$
see \cite[Theorem~1.5.12]{amann1995linear}, where $q'$ denotes the conjugate exponent of $q$.
Furthermore, for $\alpha \geq 0$, one has the continuous embedding
$$
E_q^{\alpha}
\hookrightarrow
W^{2\alpha,q}(\Omega;\mathbb{R}^N),
$$
which, by \eqref{sobolev embeddings}, implies
$$
E^\alpha_q
\hookrightarrow
L^r(\Omega;\mathbb{R}^N),
\qquad
\text{if }\ 
\frac{2\alpha}{N}-\frac{1}{q}
\geq
-\frac{1}{r},
\quad
q\leq r<\infty.
$$
In particular, choosing
\begin{equation}\label{EquationExponentR}
	r=\frac{Nq}{N-2\alpha q},
\end{equation}
we obtain
\begin{equation}\label{E^a_q Embedding_posivite}
	E_q^{\alpha}
	\hookrightarrow
	L^r(\Omega;\mathbb{R}^N),
	\quad
	\text{for }
	0\leq \alpha <\frac{N}{2q}.
\end{equation}
By duality, one also derives that
$L^{r'}(\Omega;\mathbb{R}^N)\hookrightarrow E_{q'}^{-\alpha}$.
Since
$$
r'=\frac{Nq'}{N+2\alpha q'},
$$
by replacing $r', q'$, and $\alpha$ with $r, q$, and $-\alpha$, respectively, we obtain
\begin{equation}\label{E^a_q Embedding_negative}
	E_{q}^{\alpha}
	\hookleftarrow
	L^{r}(\Omega;\mathbb{R}^N),
	\quad
	\text{for }
	-\frac{N}{2q'}< \alpha\leq 0,
\end{equation}
where $r$ is still given by \eqref{EquationExponentR}.

We assume that the kernel $g$ satisfies the hypotheses of Theorem~\ref{generation + regularity} for some $\zeta_g>1$ and set
$$
X_0=E_q^{\frac{N-q}{2q}-\frac{1}{\zeta_g}}.
$$
Moreover, we denote by $A$ the {realization} of $-L$ in $X_0$. Then, $\sigma(A)=\sigma(-L)$ and
$-A\colon X_1=E_q^{\frac{N+q}{2q}-\frac{1}{\zeta_g}}\subset X_0 \to X_0$
is a sectorial operator of angle $0$; see \cite[Theorem~2.1.3]{amann1995linear}.
As before, for $k\in \mathbb{N}$, we denote by $X_k$ the space $\mathcal{D}(A^k)$ equipped with the norm
$\|x\|_{X_k}:=\|(I-A)^k x\|_{X_0}$.
Since $0 \in \rho(A)$, one can show that
$\|(I-A)^k \cdot\|_{X_0}\sim \|A^k \cdot\|_{X_0}$,
from which it follows that
$X_k=E_q^{k+\frac{N-q}{2q}-\frac{1}{\zeta_g}}$
with equivalent norms.
Then, by \eqref{BIP property}, taking $\{\mathcal{F}_{\theta}\}_{\theta \in (0,1)}$ as the complex family of interpolation functors, the spaces
$$
X_{k+\theta}:=\mathcal{F}_{\theta}(X_k,X_{k+1}),
\quad
\theta \in (0,1),\ k\in \mathbb{N},
$$
satisfy
$$
X_\alpha=E_q^{\alpha+\frac{N-q}{2q}-\frac{1}{\zeta_g}},
\quad
\alpha\geq 0.
$$
The problem \eqref{problem ns} then takes the abstract form
\begin{align}\label{problem abstract ns}
	\left\{
	\begin{array}{ll}
		u'(t)=\displaystyle\int_{0}^{t} g(t-s)Au(s)\,ds + f(t,u(t)), \quad t>0, \\[0.3em]
		u(0)=u_0,
	\end{array}
	\right.
\end{align}
where $u_0\in X_1$.

In order to prove the $\varepsilon$-regularity condition for the nonlinear term in \eqref{problem abstract ns}, in the next lemma we deal with the convection term in the Navier--Stokes system \eqref{problem ns}.
\begin{lema}\label{Lemma non linearity in NS problem}
	Let $N \geq 3$ and $N/3<q< N$, and suppose that the scalar kernel $g$ in problem \eqref{problem abstract ns} satisfies
	\begin{equation}\label{IntervalonZeta}
		1<\zeta_g< \frac{4q}{N+q}.
	\end{equation}
	Then, for each $\varepsilon>0$ satisfying 
	\begin{equation}\label{IntervalonEpsilon}
		\frac{1}{\zeta_g}-\frac{N}{2q}
		\leq
		\varepsilon
		\leq
		\frac{1}{\zeta_g}-\frac{N+q}{4q},
	\end{equation}
	and with
	$\gamma(\varepsilon)=2\varepsilon+1-\frac{1}{\zeta_g}$,
	there exists a constant $c>0$ such that
	$$
	\|P\left(u\cdot\nabla \right)u-P\left(v\cdot\nabla \right)v\|_{X_{\gamma(\varepsilon)}}
	\leq
	c\,\|u-v\|_{X_{1+\varepsilon}}
	\left(\|u\|_{X_{1+\varepsilon}}+\|v\|_{X_{1+\varepsilon}}\right),
	\quad
	u,v \in X_{1+\varepsilon}.
	$$
	\begin{proof}
		We remark that the condition $q>N/3$ in the statement implies that the interval in \eqref{IntervalonZeta} is nonempty, while the condition $q< N$ implies that the interval in \eqref{IntervalonEpsilon} is nonempty. Moreover, the condition $\zeta_g<\frac{4q}{N+q}$ imposed by \eqref{IntervalonZeta} ensures that the interval in \eqref{IntervalonEpsilon} indeed intersects the interval $\varepsilon>0$.
		
		For the sake of simplicity, let us set $\delta=\frac{N-q}{2q}-\frac{1}{\zeta_g}$, so that $X_\alpha=E^{\alpha-\delta}_q$ for $\alpha\geq 0$. Under the conditions of the statement, we have
		$$
		0\leq 1+\varepsilon-\delta<\frac{N}{2q}
		$$
		and
		$$
		-\frac{N}{2q'}<\gamma(\varepsilon)-\delta\leq 0,
		$$
		which, by the embeddings \eqref{E^a_q Embedding_posivite} and \eqref{E^a_q Embedding_negative}, imply that
		$$
		X_{1+\varepsilon}=E_q^{1+\varepsilon-\delta}\hookrightarrow L^{r_1}(\Omega;\mathbb{R}^N)
		$$
		and
		$$
		X_{\gamma(\varepsilon)}=E_q^{\gamma(\varepsilon)-\delta}\hookleftarrow L^{r_2}(\Omega;\mathbb{R}^N),
		$$
		with
		$$
		r_1=\frac{Nq}{N-2(1+\varepsilon-\delta)q}
		\quad \text{and} \quad
		r_2=\frac{Nq}{N-2(\gamma(\varepsilon)-\delta)q}.
		$$
		Now, let $r_0$ be the exponent satisfying $1/r_0+1/r_1=1/r_2$, that is,
		$$
		r_0=\frac{r_1r_2}{r_1-r_2}=\frac{N}{2(1+\varepsilon-\gamma(\varepsilon))}.
		$$
		Then,
		\begin{align*}
			\|P\left(u\cdot\nabla \right)u-P\left(v\cdot\nabla\right) v\|_{X_{\gamma(\varepsilon)}}
			&\leq c \|(u\cdot \nabla )u - (v \cdot \nabla) v\|_{L^{r_2}} \\
			&= c\|\left((u-v)\cdot \nabla \right)u + \left(v\cdot\nabla \right)(u-v)\|_{L^{r_2}}\\
			&\leq c\|u-v\|_{L^{r_1}}\| u \|_{W^{1,r_0}}
			+ c\|v\|_{L^{r_1}}\| u - v\|_{W^{1,r_0}},
		\end{align*}
		The conditions
		$$
		\frac{2(1+\varepsilon-\delta)}{N}-\frac{1}{q}
		\geq
		\frac{1}{N}-\frac{1}{r_0}
		\quad\text{and}\quad
		q\leq r_0<\infty
		$$
		are satisfied and, by the embedding \eqref{sobolev embeddings}, it follows that
		$$
		W^{2(1+\varepsilon-\delta),q}(\Omega;\mathbb{R}^N)
		\hookrightarrow
		W^{1,r_0}(\Omega;\mathbb{R}^N).
		$$
		Since
		$$
		X_{1+\varepsilon}
		=
		E_q^{1+\varepsilon-\delta}
		\hookrightarrow
		W^{2(1+\varepsilon-\delta),q}(\Omega;\mathbb{R}^N),
		$$
		we obtain
		$$
		X_{1+\varepsilon}\hookrightarrow W^{1,r_0}(\Omega;\mathbb{R}^N).
		$$
		Therefore,
		$$
		\|P\left(u\cdot\nabla \right)u-P\left(v\cdot\nabla\right) v\|_{X_{\gamma(\varepsilon)}}
		\leq
		c\|u-v\|_{X_{1+\varepsilon}}
		\left(\|u\|_{X_{1+\varepsilon}}+\|v\|_{X_{1+\varepsilon}}\right).
		$$
	\end{proof}	
\end{lema}
From Lemma~\ref{Lemma non linearity in NS problem}, it follows that if the map $t \mapsto h(\cdot,t)$ in problem \eqref{problem abstract ns} satisfies a suitable condition near $t=0$, then conditions (H1) and (H2) of Section~\ref{section critical case} on the nonlinearity
$$
f(t,u(t))=-P(u(t)\cdot\nabla u(t))+Ph(\cdot,t)
$$
are satisfied, and we can apply our abstract theorems. This is stated precisely in the next result.

\begin{theorem}\label{theorem well-posedness navier stokes}
	Let $N \geq 3$ and $N/3<q<N$. Let $\{E_q^{\alpha}\}_{\alpha \in \mathbb{R}}$ be the fractional power scale associated with the Stokes operator $A$ on $L^q_\sigma(\Omega;\mathbb{R}^N)$ with domain
	$\mathcal{D}(A)=W^{2,q}(\Omega;\mathbb{R}^N)\cap W^{1,q}_0(\Omega;\mathbb{R}^N) \cap L^q_\sigma(\Omega;\mathbb{R}^N)$.
    Suppose that $g$ satisfies the hypotheses of Theorem~\ref{generation + regularity} -- for instance, if $g$ has the form given in \eqref{kernelappendix} --  with
	\begin{equation*}
		1<\zeta_g< \frac{4q}{N+q}.
	\end{equation*}
	Assume further that, for some $\varepsilon>0$ satisfying
	\begin{equation*}
		\frac{1}{\zeta_g}-\frac{N}{2q}
		\leq
		\varepsilon
		\leq
		\frac{1}{\zeta_g} - \frac{N+q}{4q},
	\end{equation*}
	the function $h:(0,\infty)\to E_q^{2\varepsilon+\frac{N+q}{2q}-\frac{2}{\zeta_g}}$ is measurable and satisfies
	$$
	\|Ph(t)\|_{E_q^{2\varepsilon+\frac{N+q}{2q}-\frac{2}{\zeta_g}}}
	\leq
	\nu(t)t^{-2\zeta_g\varepsilon},
	\quad t>0,
	$$
	where $\nu:(0,\infty)\to \mathbb{R}$ is a non-decreasing function with $\lim_{t \to 0^+}\nu(t)=0$.
	Then, for all $w \in E_q^{\frac{N+q}{2q}-\frac{1}{\zeta_g}}$, there exist $r=r(w)>0$ and $\tau_0=\tau_0(w)>0$ such that problem \eqref{problem abstract ns}
	admits an $\varepsilon$-regular mild solution $u(\,\cdot\,;u_0)$ defined on $[0,\tau_0]$ for all $u_0$ in the closed ball
	$B_{E_q^{\frac{N+q}{2q}-\frac{1}{\zeta_g}}}[w,r]$.
	Moreover, the following statements hold.
	\begin{itemize}
		\item[{(A)}] For all $\theta \in (0,2\varepsilon)$, we have
		$$
		u(\,\cdot\,;u_0)
		\in
		C\!\left((0,\tau_0];E_q^{\theta+\frac{N+q}{2q}-\frac{1}{\zeta_g}}\right),
		$$
		and, if $J \subset B_{E_q^{\frac{N+q}{2q}-\frac{1}{\zeta_g}}}[w,r]$ is compact, then
		$$
		\lim_{t\to0^+}
		t^{\zeta_g\theta}
		\sup_{u_0 \in J}
		\|u(t;u_0)\|_{E_q^{\theta+\frac{N+q}{2q}-\frac{1}{\zeta_g}}}
		=
		0.
		$$
		\item[{(B)}] For each $\theta \in [0,2\varepsilon)$, there exists a constant $L=L(\theta,w)$ such that
		$$
		t^{\zeta_g\theta}
		\|u(t;u_0)-u(t;u_1)\|_{E_q^{\theta+\frac{N+q}{2q}-\frac{1}{\zeta_g}}}
		\leq
		L
		\|u_0-u_1\|_{E_q^{\frac{N+q}{2q}-\frac{1}{\zeta_g}}},
		$$
		for $t\in (0,\tau_0]$ and
		$u_0,u_1 \in B_{E_q^{\frac{N+q}{2q}-\frac{1}{\zeta_g}}}[w,r]$.
		\item[{(C)}] The $\varepsilon$-regular mild solution $u(\cdot;u_0)$ can be continued on an interval $[0,\tau_{\mathrm{max}})$, where $\tau_{\mathrm{max}} \in (\tau_0,\infty]$.
		If $\tau_{\mathrm{max}}<\infty$, then
		$$
		\limsup_{t \to \tau_{\mathrm{max}}^-}
		\|u(t;u_0)\|_{E_q^{\varepsilon+\frac{N+q}{2q}-\frac{1}{\zeta_g}}}
		=
		\infty.
		$$
	\end{itemize}
	Moreover, if $v:[0,\tau_1]\to E_q^{\frac{N+q}{2q}-\frac{1}{\zeta_g}}$ is an $\varepsilon$-regular mild solution of problem \eqref{problem ns} satisfying
	$$
	\lim_{t\to0^+}
	t^{\zeta_g\varepsilon}
	\|v(t)\|_{E_q^{\varepsilon+\frac{N+q}{2q}-\frac{1}{\zeta_g}}}
	=
	0,
	$$
	then $\tau_1<\tau_{\mathrm{max}}$ and $v(t)=u(t;u_0)$ for all $t \in [0,\tau_1]$.
\end{theorem}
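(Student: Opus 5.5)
The plan is to reduce Theorem \ref{theorem well-posedness navier stokes} to the abstract results of Section 3, namely Theorem \ref{theorem critic case well-posedness}, Theorem \ref{theorem continuation critic case} and the blow-up alternative, applied with $X_\alpha=E_q^{\alpha+\delta}$ and $\delta=\frac{N-q}{2q}-\frac{1}{\zeta_g}$. Note that $X_1=E_q^{\frac{N+q}{2q}-\frac1{\zeta_g}}$ and $X_{1+\theta}=E_q^{\theta+\frac{N+q}{2q}-\frac1{\zeta_g}}$. The abstract framework has already been set up in the text: $-A$ is sectorial of angle $0$ on $X_0$, and the interpolation scale coincides with the fractional power scale by \eqref{BIP property}. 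By hypothesis, $g$ satisfies Theorem \ref{generation + regularity}. What remains is to verify the two structural conditions on $f(t,u)=-P(u\cdot\nabla)u+Ph(t)$ with $\rho=2$ and $\gamma(\varep)=2\varep+1-\frac1{\zeta_g}$, and to check the criticality condition.

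\textbf{Step 1: criticality condition.} We need $\gamma(\varep)\ge 1-\frac1{\zeta_g}+\rho\varep$. With $\rho=2$ both sides equal $2\varep+1-\frac1{\zeta_g}$, so equality holds. Consequently $\gamma(\varep)-1+1/\zeta_g=2\varep$, which explains the range $\theta\in(0,2\varep)$ in items (A) and (B). We must also confirm $\gamma(\varep)\in(0,1)$. Positivity follows from $\zeta_g>1$. For $\gamma(\varep)<1$ we need $\varep<\frac1{2\zeta_g}$; this follows from $\varep\le\frac1{\zeta_g}-\frac{N+q}{4q}$ combined with $\frac{N+q}{4q}>\frac1{2\zeta_g}$, and the latter I expect to follow from $\zeta_g>1$ and $q<N$, possibly after using the upper bound on $\zeta_g$.

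\textbf{Step 2: the nonlinearity.} Lemma \ref{Lemma non linearity in NS problem} gives
$$\|f(t,x)-f(t,y)\|_{X_{\gamma(\varep)}}\le c\|x-y\|_{X_{1+\varep}}(\|x\|_{X_{1+\varep}}+\|y\|_{X_{1+\varep}}),$$
which is stronger than the required Lipschitz bound with $\rho-1=1$, since the $\nu$-term there is nonnegative. For the growth bound, take $y=0$ to get $\|P(x\cdot\nabla)x\|_{X_{\gamma(\varep)}}\le c\|x\|^2_{X_{1+\varep}}$. The forcing lives in $X_{\gamma(\varep)}=E_q^{2\varep+\frac{N+q}{2q}-\frac2{\zeta_g}}$, which matches the space assumed for $Ph$. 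Its bound $\nu(t)t^{-2\zeta_g\varep}$ has to equal $\nu(t)t^{-1+\zeta_g(1-\gamma(\varep))}$. Computing, $-1+\zeta_g(1-\gamma(\varep))=-1+\zeta_g(\frac1{\zeta_g}-2\varep)=-2\zeta_g\varep$, as required. Measurability of $t\mapsto f(t,x)$ follows from that of $h$.

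\textbf{Step 3: conclusions.} Existence together with (A) and (B) follow directly from Theorem \ref{theorem critic case well-posedness}, taking $x_0=w$. Since $\theta=\varep\in(0,2\varep)$, the solution satisfies $t^{\zeta_g\varep}\|u(t)\|_{X_{1+\varep}}\to0$ by (A) applied to the compact set $J=\{u_0\}$. Therefore Theorem \ref{theorem continuation critic case} and the blow-up alternative apply, which yields (C).

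For the final uniqueness claim, let $v$ be as in the statement. Item (C) of Theorem \ref{theorem critic case well-posedness} gives agreement near $0$, and the remark on absence of bifurcation then gives agreement on the common interval of existence. Finally, $\tau_1<\tau_{\max}$ holds because $v$ is itself a continuation of $u$ on $[0,\tau_1]$, so $\tau_1\le\tau_{\max}$. If $\tau_1=\tau_{\max}$, Theorem \ref{theorem continuation critic case} applied to $v$ on $[0,\tau_1]$ extends the solution further, contradicting maximality.

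The main obstacle is the bookkeeping of exponents. Specifically, one must check that the admissible range of $\varep$ keeps $\gamma(\varep)\in(0,1)$ and that the forcing exponent matches exactly. The Lipschitz estimate itself is already provided by Lemma \ref{Lemma non linearity in NS problem}.
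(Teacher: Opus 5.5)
Your proposal is correct and follows the paper's own route. The paper verifies the $\varepsilon$-regularity hypotheses via Lemma \ref{Lemma non linearity in NS problem}, with $\rho=2$, $\gamma(\varepsilon)=2\varepsilon+1-1/\zeta_g$ and forcing exponent $-1+\zeta_g(1-\gamma(\varepsilon))=-2\zeta_g\varepsilon$, and then invokes Theorems \ref{theorem critic case well-posedness} and \ref{theorem continuation critic case} and the blow-up alternative; your argument for $\tau_1<\tau_{\max}$ is more explicit than the paper's, and your hedge in Step 1 resolves without the upper bound on $\zeta_g$, since $\gamma(\varepsilon)<1$ iff $\varepsilon<1/(2\zeta_g)$, which follows from $\varepsilon\le 1/\zeta_g-(N+q)/(4q)$ because $q<N$ gives $2q/(N+q)<1<\zeta_g$.
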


\begin{remark}
	Consider the problem \eqref{problem ns}  in power-type materials, that is, taking $g$ satisfying
	$$1\ast g(t)=\frac{t^\alpha}{\Gamma(\alpha+1)}, \quad t>0,$$
	where $\alpha\in[0,1)$. In this case, we can take $\zeta_g=1+\alpha$, see the Appendix for details. This problem was firstly considered by de Andrade, Viana and Silva in \cite{andrade2021viana2021silva}. We remark that their main results on local well-posedness, Theorems 3 and 4,  can be completely recovered by our Theorem \ref{theorem well-posedness navier stokes}. 
\end{remark}

\begin{remark}
	Arrieta and Carvalho prove a similar result of local existence and uniqueness of $\varepsilon$-regular mild solution for the classical Navier-Stokes problem, see \cite[Subsection 3.1]{arrieta2000abstract}. They result can be almost completely recovered by Theorem \ref{theorem well-posedness navier stokes} if we formally replace $\zeta_g=1$, except for the uniqueness, since our result assures uniqueness in the class of $\varepsilon$-regular mild solutions that satisfy
	$$\lim_{t\to0^+} t^{\varepsilon}\|v(t)\|_{E_q^{\frac{N-q}{2q}+\varepsilon}}=0.$$
	Particularly,  our result is compatible with the classical results by Fujita and Kato on the Navier-Stokes equation, see \cite{fujita1963navier,kato1962nonstationary}.
\end{remark}
	%
	%
%
%
%

%
\subsection[Semilinear reaction-diffusion equations with memory]{Semilinear reaction-diffusion equations with memory%
	\sectionmark{Reaction-diffusion equations}}
\sectionmark{Reaction-diffusion equations}

In this section, we study two problems concerning {diffusion phenomena} governed by equations of the form
\begin{equation*}\label{reaction-difussion equation}
	u_t(x,t) =\dint_0^t g(t-s)\Delta u(x,s)\,ds+ f(u(x,t)),\quad x\in \Omega\subset \mathbb{R}^N, \, t>0.
\end{equation*}
Such equations arise naturally in problems of heat flow in homogeneous isotropic rigid heat conductors $\Omega\subset \mathbb{R}^N$ subject to hereditary memory, when the Fourier's constitutive law for the heat flux,
$$q(x,t)=-c \nabla u(x,t),$$
which leads to the classical heat equation, is replaced by
\begin{equation}\label{poq}
	q(x,t)= -\int_0^\infty g(t-s)\nabla u(x,s) ds,
\end{equation}
see, e.g., \cite{monica2014reaction,fabrizio2002asymptotic}.  The model \eqref{poq} for the constitutive law for the flux was proposed by Gurtin and Pipkin in \cite{gurtin1968general}, and thenceforth, it has been well accepted in modeling heat conduction phenomena in materials with memory, see, e.g., \cite{monica2014reaction,hristov2016transient,maccamy1977integro,miller1978integrodifferential,nunziato71onheat}. 
\subsubsection{Super-linear nonlinearity and initial data in Lebesgue spaces}
In recent years, a lot of research about diffusion equations involving a nonlinearity of type $f(u)=c_0|u|^{\rho-1}u$, $c_0 \in \mathbb{R}$, has been made, see, e.g., \cite{brezis1996nonlinear,loayza2006heat,snoussi2001asymptotically,viana2019local,weissler1980,zhang2015blow} and references therein. In this context, we consider $\Omega\subset\mathbb{R}^N$ as being a smooth and bounded domain and study the initial value problem 
\begin{align}\label{Lorentz problem}
	\left\{
	\begin{array}{ll}
		 u_t =\dint_0^t g(t-s)\Delta u(x,s)\,ds+ c_0|u|^{\rho-1}u, \quad \textrm{in } \Omega\times(0,\infty), \\
		u(x,t)=0, \quad (x,t) \in \partial \Omega\times [0,\infty), \\
		u(x,0)=u_0(x), \quad x \in \Omega. \\
	\end{array}\right.
\end{align}
We are focused on determining the critical exponent for the above equation when \(u_0\in L^q(\Omega)\), with  \(1 < q < \infty\).  Let us consider the Laplace operator $L:=\Delta$ with domain $\mathcal{D}(L):=W^{2,q}(\Omega)\cap W^{1,q}_0(\Omega)$. Likewise the Stokes operator, it is well known that $-L:\mathcal{D}(L)\subset L^q(\Omega) \to L^q(\Omega)$ is a sectorial operator of angle $0$, with $0 \in \rho(L)$, and it admits a scale of fractional powers spaces $\{F^q_\beta\}_{\beta \in \mathbb{R}}$ which satisfies
$$F^{(1-\theta)\alpha+\theta\beta}_q=[F^\alpha_q,F^{\beta}_q]_{\theta}, \quad -\infty<\alpha\leq \beta<\infty,\,\theta \in (0,1),$$
and 
\begin{align*}
	F^{\beta}_q \hookrightarrow H^{2\beta}_q(\Omega), \quad \beta \geq 0, \quad 1<q<\infty,
\end{align*}
see \cite[Section 8.3]{haase2006functional} and \cite[Theorem 1.5.4]{amann1995linear}.
Then, as in the previous section, from the embedding \eqref{sobolev embeddings}, we obtain 
\begin{align}\label{wqa}
	\left\{
	\begin{array}{lll}
		F^{\beta}_q \hookrightarrow L^r(\Omega), \quad r=\frac{Nq}{N-2\beta q}, \quad 0\leq \beta<\frac{N}{2q},\\
		F^0_q=L^q(\Omega), \\
		F^{\beta}_q\hookleftarrow L^s(\Omega), \quad s=\frac{Nq}{N-2\beta q}, \quad -\frac{N}{2q'}< \beta \leq 0.
	\end{array}\right.
\end{align}

Further, defining $X_0=F_q^{-1}$ and denoting by $A$ the realization of $L$ in $X_0$, we have that $-A$ is a sectorial operator with domain $\mathcal{D}(A)=L^q(\Omega)\subset X_0$ and, for $k\in \mathbb{N}$, the space $X_k:=\mathcal{D}(A^k)$, $\|x\|_{X_k}:=\|(I-A)^kx\|_{X_0},$ satisfies $X_k:=F^{k-1}_q$. We use again the complex family of interpolation functors $\mathcal{F}_{\theta}=[\cdot,\,\cdot]_{\theta}$ to obtain
$$X_{k+\theta}:=\mathcal{F}_{\theta}(X_k,X_{k+1})=F^{k-1+\theta}_q, \quad \theta \in (0,1), \, k\in \mathbb{N}.$$
In this abstract setting, the problem \eqref{Lorentz problem} becomes
\begin{align}\label{abstract Lorentz problem}
	\left\{
	\begin{array}{ll}
		u'(t)=\dint_{0}^{t}g(t-s)Au(s)ds + f(u(t)), \quad t>0, \\
		u(0)=u_0,
	\end{array}
	\right.
\end{align}
where, $f(w)=c_0|w|^{\rho-1}w$ for $w \in X_1=L^q(\Omega)$.
\begin{lema}\label{lemma non linearity}
	If $a,b,\eta>0$, then
	$$\left|a^\eta - b^\eta\right|\leq \eta |a-b|\left(a^{\eta-1}+b^{\eta-1}\right).$$
	If $\eta>1$, then the same holds with $a,b \geq 0$.
	\begin{proof}
		Let $\phi(t)=t^\eta$, $t>0$. Without loss of generality, we suppose $a<b$. Then,
		\begin{align*}
			|a^\eta - b^\eta| = \left|\int_a^b \phi'(t)dt\right| = \eta\int_a^b t^{\eta-1}dt \leq \eta(b-a)\max_{t \in [a,b]}t^{\eta-1}\\
			\leq \eta|a-b|\left(a^{\eta-1}+b^{\eta-1}\right)	
		\end{align*}
	\end{proof}
\end{lema}
\begin{lema}\label{lemma non linearity 2}
	Let $x,y \in \mathbb{C}$ and $\rho>1$. Then,
	\begin{equation}\label{tyn}
		\left||x|^{\rho-1}x-|y|^{\rho-1}y\right|\leq \rho |x-y|\left(|x|^{\rho-1}+|y|^{\rho-1}\right).
	\end{equation}
	\begin{proof}
		We note that
		\begin{align}\label{plo}
			\left||x|^{\rho-1}x-|y|^{\rho-1}y\right|&= \left|(x-y)|y|^{\rho-1}+x\left(|x|^{\rho-1}-|y|^{\rho-1}\right)\right|\nonumber\\
			&\leq\left|x-y\right||y|^{\rho-1}+|x|\left||x|^{\rho-1}-|y|^{\rho-1}\right|.
		\end{align}
		Clearly, \eqref{tyn} holds if $x=0$ or $y=0$. Then, we can suppose $|x|,|y|>0$ and, applying Lemma \ref{lemma non linearity}, we have
		$$\left||x|^{\rho-1}-|y|^{\rho-1}\right|\leq (\rho-1)\left||x|-|y|\right|\left(|x|^{\rho-2}+|y|^{\rho-2}\right).$$
		Then, by \eqref{plo},
		\begin{align*}
			\left||x|^{\rho-1}x-|y|^{\rho-1}y\right|&\leq |x-y||y|^{\rho-1}+ |x|(\rho-1)\left||x|-|y|\right|\left(|x|^{\rho-2}+|y|^{\rho-2}\right) \\
			&\leq |x-y|\left(|x|^{\rho-1}+|y|^{\rho-1}\right) + (\rho-1)|x-y|\left(|x|^{\rho-1}+|x||y|^{\rho-2}\right).
		\end{align*}
		Without loss of generality, we suppose $|x|\leq |y|$. Then, $|x||y|^{\rho-2}\leq |y|^{\rho-1}$ and so
		\begin{align*}
			\left||x|^{\rho-1}-|y|^{\rho-1}\right|&\leq |x-y|\left(|x|^{\rho-1}+|y|^{\rho-1}\right) + (\rho-1)|x-y|\left(|x|^{\rho-1}+|y|^{\rho-1}\right)\\
			&=\rho|x-y|\left(|x|^{\rho-1}+|y|^{\rho-1}\right).
		\end{align*}
	\end{proof}
\end{lema}
\begin{lema}\label{lemma non linearity 3}
	Let $1<\rho\leq p<\infty$ and $c_0 \in \mathbb{R}$ be fixed, and consider the map $$f:u \mapsto c_0|u|^{\rho-1}u, \quad u\in L^{p}(\Omega).$$
	For $u,v \in L^{p}(\Omega)$, the following inequality holds:
	$$\left\|f(u)-f(v)\right\|_{L^{p/\rho}}\leq c\|u-v\|_{L^{p}}\left(\|u\|_{L^{p}}^{\rho-1}+\|v\|_{L^{p}}^{\rho-1}\right),$$
	where $c=|c_0|\rho$.
	\begin{proof}
		By Lemma \ref{lemma non linearity 2},
		\begin{equation*}
			\left||u|^{\rho-1}u-|v|^{\rho-1}v\right|\leq \rho |u-v|\left(|u|^{\rho-1}+|v|^{\rho-1}\right).
		\end{equation*}
		Then, it follows from Hölder's inequality that
		\begin{align}\label{qlg}
			\left\|f(u)-f(v)\right\|_{L^{p/\rho}}\leq c\|u-v\|_{L^{p}}\left\||u|^{\rho-1}+|v|^{\rho-1}\right\|_{L^s},
		\end{align}
		where $s$ is such that
		$$\frac{1}{p/\rho}=\frac{1}{p}+\frac{1}{s},$$
		that is, $s=\frac{p}{\rho-1}$.
		We note that, for all $w \in L^{p}(\Omega)$,
		$$\left\||w|^{\rho-1}\right\|
		_{L^s}
		=\left(\int_\Omega(|w|^{\rho-1})^s\right)
		^{1/s}=
		\left(\int_\Omega|w|^{p}\right)
		^{\frac{\rho-1}{{p}}}
		=\|w\|^{\rho-1}_{L^{p}}.$$
		Therefore, from \eqref{qlg}, we have
		\begin{align*}
			\left\|f(u)-f(v)\right\|_{L^{p/\rho}}&\leq  c\|u-v\|_{L^{p}}\left(\left\||u|^{\rho-1}\right\|_{L^s}
			+
			\left\||v|^{\rho-1}\right\|_{L^s}\right) \\
			&= c\|u-v\|_{L^{p}}\left(\|u\|_{L^{p}}^{\rho-1}+\|v\|_{L^{p}}^{\rho-1}\right).
		\end{align*}
	\end{proof}
\end{lema}
\begin{lema}
	Let $1<\rho<1+\frac{2q}{N}$, $c_0 \in \mathbb{R}$, and suppose that $1<\zeta_g\leq \frac{2q}{N(\rho-1)}$. Then, the nonlinearity $f(u)=c_0|u|^{\rho-1}u$ satisfies
	$$\|f(u)-f(v)\|_{X_{\gamma(\varep)}}\leq c \|u-v\|_{X_{1+\varep}}\left(\|u\|_{X_{1+\varep}}^{\rho-1}+\|v\|_{X_{1+\varep}}^{\rho-1}\right), \quad u,v \in X_{1+\varep},$$
	for each $\varep$ such that $\max\left\{0,\frac{1}{\rho}\left(\frac{1}{\zeta_g}-\frac{N}{2q'}\right)\right\}<\varep<\min\left\{\frac{1}{\rho\zeta_g},\frac{N}{2q}\right\}$, $\gamma(\varep)=\rho\varep+1-\frac{1}{\zeta_g}$ and some $c>0$.
	\begin{proof}
		Let us denote $p=\frac{Nq}{N-2\varep q}$. By the embeddings \eqref{wqa},
		\begin{equation*}
			X_{1+\varep}=F^{\varep}_q\hookrightarrow L^p(\Omega), \quad  L^{\frac{Nq}{N-2(\gamma(\varep)-1)q}}(\Omega)\hookrightarrow X_{\gamma(\varep)},
		\end{equation*}
		and since
		\begin{align*}
			N-2(\gamma(\varep)-1)q &= N-2\rho\varep q +2q/\zeta_g \\
			& \geq N - 2\rho\varep q + N(\rho-1)\\
			&=(N-2\varep q)\rho \\
			&= (Nq/p)\rho,
		\end{align*}
		we have $$\frac{Nq}{N-2(\gamma(\varep)-1)q}\leq p/\rho.$$
		Then,
		$$L^{p/\rho}(\Omega)\hookrightarrow L^{\frac{Nq}{N-2(\gamma(\varep)-1)q}}(\Omega),$$
		from whence
		$$	X_{1+\varep}\hookrightarrow L^p(\Omega), \quad  L^{p/\rho}(\Omega)\hookrightarrow X_{\gamma(\varep)}
		$$
		and the proof follows from Lemma \ref{lemma non linearity 3}.
	\end{proof}
\end{lema}

Now, we can apply our abstract results to the problem \eqref{abstract Lorentz problem}.

\begin{theorem}
	Let $\{F^\alpha\}_{\alpha \in \mathbb{R}}$ be the fractional power scale associated with the operator $-\Delta$ on $L^q(\Omega)$ with domain $W^{2,q}(\Omega)\cap W^{1,q}_0(\Omega)$ and let $1<\rho<1+\frac{2q}{N}$. Suppose that $g$ satisfies the hypotheses of Theorem~\ref{generation + regularity} -- for instance, if $g$ has the form given in \eqref{kernelappendix} -- with $1<\zeta_g\leq \frac{2q}{N(\rho-1)}$ and let $\varep$ such that $$\max\left\{0,\frac{1}{\rho}\left(\frac{1}{\zeta_g}-\frac{N}{2q'}\right)\right\}<\varep<\min\left\{\frac{1}{\rho\zeta_g},\frac{N}{2q}\right\}.$$
	Then, for all $w \in L^q(\Omega)$, there exist $r=r(w)>0$ and $\tau_0=\tau_0(w)>0$ such that the problem \eqref{abstract Lorentz problem} has an $\varep$-regular mild solution $u(\,\cdot\, ,u_0)$ defined on $[0,\tau_0]$ for all $u_0$ in the closed ball $B_{L^q(\Omega)}[w,r].$ Moreover, the following statements hold.
	\begin{itemize}
		\item[{(A)}] For all $\theta \in (0, \rho\varep)$, we have
		$$u(\,\cdot\,,u_0) \in C\left((0,\tau_0];F^\theta\right)$$
		and, if $J \subset B_{L^q(\Omega)}[w,r]$ is compact, then
		$$\lim_{t\to0^+}t^{\zeta_g\theta}\sup_{u_0 \in J}\|u(t;u_0)\|_{F^\theta}=0.$$
		\item[{(B)}] For each $\theta \in [0, \rho\varep)$, there exists a constant $L=L(\theta,w)$ such that
		$$t^{\zeta_g\theta}\|u(t;u_0)-u(t,u_1)\|_{F^\theta}\leq L \|u_0-u_1\|_{L^q(\Omega)}, \quad t\in (0,\tau_0],\, u_0,u_1 \in B_{L^q(\Omega)}[w,r].$$
		\item[{(C)}] The $\varep$-regular mild solution $u(\cdot;u_0)$ can be continued on an interval $[0,\tau_{\mathrm{max}})$, where $\tau_{\mathrm{max}} \in (\tau_0,\infty]$. If $\tau_{\mathrm{max}}<\infty$, then 
		$$\limsup_{t \to \tau_{\textrm{max}}\,^-} \|u(t;u_0)\|_{_{F^\varep}}=\infty.$$
	\end{itemize}
Moreover, if $v$ is an $\varep$-regular mild solution on some interval $[0,\tau_1]$ for the problem \eqref{abstract Lorentz problem} satisfying
$$\lim_{t\to0^+} t^{\zeta_g\varep}\|v(t)\|_{F^{\varep}}=0,$$
then $\tau_1<\tau_{\mathrm{max}}$ and $v(t) = u(t;u_0)$ for all $t \in [0,\tau_1]$.
\end{theorem}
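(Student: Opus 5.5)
The plan is to obtain the theorem from the abstract critical-case machinery of Section 3, applied in the scale $X_\alpha=F^{\alpha-1}_q$ built on $X_0=F^{-1}_q$. In this scale $X_1=L^q(\Omega)$, $X_{1+\theta}=F^\theta_q$ and $-A$ is sectorial. The work splits into four steps.

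\textbf{Step 1 (nonlinearity).} The first step is to check the hypotheses on $f$ from Subsection \ref{section critical case}. The preceding lemma gives, for $\varep$ in the stated range and $\gamma(\varep)=\rho\varep+1-\frac{1}{\zeta_g}$,
$$\|f(u)-f(v)\|_{X_{\gamma(\varep)}}\leq c\|u-v\|_{X_{1+\varep}}\big(\|u\|_{X_{1+\varep}}^{\rho-1}+\|v\|_{X_{1+\varep}}^{\rho-1}\big).$$
Taking $v=0$ and using $f(0)=0$ gives $\|f(u)\|_{X_{\gamma(\varep)}}\leq c\|u\|^\rho_{X_{1+\varep}}$. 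Since $f$ does not depend on $t$, we may take $\nu\equiv 0$, and measurability in $t$ is trivial. I also need $\varep,\gamma(\varep)\in(0,1)$. We have $\varep<\frac{1}{\rho\zeta_g}<1$. Moreover $\gamma(\varep)<1$ is equivalent to $\rho\varep<1/\zeta_g$, which holds, and $\gamma(\varep)>0$ is clear since $\zeta_g>1$.

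\textbf{Step 2 (the structural condition).} The critical theorem requires $\gamma(\varep)\geq 1-\frac{1}{\zeta_g}+\rho\varep$. Here this holds with equality, by the very choice of $\gamma(\varep)$. This places us in the borderline case, where $1-\zeta_g(1-\gamma(\varep)+\rho\varep)=0$. The proof of Theorem \ref{theorem critic case well-posedness} allows exactly this, because it only needs that exponent to be $\geq 0$ when choosing $\mu$ and $\tau_0$ in \eqref{gyg}. Consequently item (D) (uniform time on bounded sets) is not claimed, and it is correctly absent from the statement.

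\textbf{Step 3 (translating the conclusions).} Next I apply Theorem \ref{theorem critic case well-posedness}. The admissible regularity range is
$$\theta\in\big(0,\gamma(\varep)-1+1/\zeta_g\big)=(0,\rho\varep),$$
and $X_{1+\theta}=F^\theta_q$, $X_1=L^q(\Omega)$. This yields existence on $B_{L^q}[w,r]$ together with (A) and (B). For (C), the solution satisfies $t^{\zeta_g\varep}\|u(t)\|_{X_{1+\varep}}\to0$; this is item (A) of the abstract theorem with $\theta=\varep<\rho\varep$, taking $J$ a singleton. The continuation result, Theorem \ref{theorem continuation critic case}, then applies, defining $\tau_{\max}$, and the blow-up alternative gives blow-up of $\|u(t)\|_{F^\varep}$.

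\textbf{Step 4 (uniqueness; main obstacle).} The final uniqueness claim is the most delicate point. Locally, item (C) of Theorem \ref{theorem critic case well-posedness} gives $v=u$ near $0$. The remark following Theorem \ref{theorem continuation critic case} (no bifurcation) extends the equality to the common interval. If $\tau_1\ge\tau_{\max}$, then $v$ would be a continuation of $u$ beyond or up to $\tau_{\max}$, contradicting maximality: $v$ is continuous on $[0,\tau_1]$ with values in $X_{1+\varep}$, so it is bounded near $\tau_{\max}$, against the blow-up alternative. I would handle the edge case $\tau_1=\tau_{\max}$ by exactly that boundedness argument. I expect this step to need the most care, though it only combines results already proved.
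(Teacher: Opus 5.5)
Your proposal is correct and takes the same route as the paper, which obtains this theorem simply by applying the abstract results. Concretely, you feed the preceding nonlinearity lemma into Theorem \ref{theorem critic case well-posedness}, the continuation theorem and the blow-up alternative, working in the scale $X_\alpha=F^{\alpha-1}_q$ with $\nu\equiv 0$ and $\gamma(\varep)=\rho\varep+1-1/\zeta_g$. This choice makes the structural hypothesis hold with equality, and $\theta$ then ranges over $(0,\rho\varep)$.

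Your handling of the final uniqueness claim is also sound: local uniqueness, then the no-bifurcation remark, then boundedness of $v$ near $\tau_{\max}$. One minor imprecision: $v$ is continuous into $X_{1+\varep}$ only on $(0,\tau_1]$, not on $[0,\tau_1]$. This is harmless, since $\tau_{\max}>0$.
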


\begin{remark}
The above result shows us that for $1<\rho<1+\frac{2q}{N}$,  the condition on $\zeta_g$ for the well-posedness of \eqref{Lorentz problem} is
\begin{equation}\label{critico}
	1<\frac{N\zeta_g(\rho-1)}{2}\leq q,
\end{equation}
underscoring the influence of the material function in the analysis of the problem. For example, in the context of a Hookean Solid, that is, taking $g$ satisfying
	$$1\ast g(t)=\mu t, \quad t>0,$$
for some $\mu\ne0$, we can choose $\zeta_g=2$, see the Appendix for details, and then \eqref{critico} reduces to $	1<N(\rho-1)\leq q$. On the other hand, in the context of power-type materials we have
$$1\ast g(t)= \frac{t^{\alpha}}{\Gamma(\alpha+1)}, \quad t>0,$$
	where $\Gamma$ is the Gamma function and $\alpha\in(0,1)$. Hence $\zeta_g=1+\alpha$, and \eqref{critico} takes the form
\begin{equation}\label{criticofrac}	
	1<\frac{N(\alpha+1)(\rho-1)}{2}\leq q.
\end{equation}	
\end{remark}

\begin{remark}
 At this point, it is important to observe that \eqref{critico} is closely related with the critical case for the nonlinear  heat equation
\begin{equation*}
	\left\{ \begin{array}{ll}
		u_{t}  = \Delta u +|u|^{\rho-1}u,~\mbox{in}~[0,\infty)\times \Omega,\\
		u=0,~\mbox{on}~[0,\infty)\times \partial\Omega,~~\\
		u(0,x) = u_0(x),~\mbox{in}~\Omega,&
	\end{array} \right.
\end{equation*}
when $u_0\in L^{q}(\Omega)$.  Indeed, Brezis and Cazenave \cite{brezis1996nonlinear} have proved that the value
\begin{equation}\label{criticoheat}
	q=\frac{N(\rho-1)}{2}
\end{equation}
 plays a critical role to the above problem, see also \cite{arrieta2000abstract, weissler1980}. In \cite{andrade2015}, de Andrade et al. show that this same value is the critical exponent for the subdiffusive heat equation  
\begin{equation*}
	\left\{ \begin{array}{ll}
		\partial_{t}^{\gamma} u = \Delta u + |u|^{\rho-1}u,~\mbox{in}~[0,\infty)\times \Omega,\\
		u=0,~\mbox{on}~[0,\infty)\times \partial\Omega,~~\\
		u(0,x) = u_0(x),~\mbox{in}~\Omega,&
	\end{array} \right.
\end{equation*}
where $\partial_{t}^{\gamma}$ is Caputo's fractional derivative,  $\gamma \in (0,1)$, and $u_0\in L^{q}(\Omega)$. Likewise,  Costa et al. \cite{costa2025} recently proved that \eqref{criticoheat} is also the critical exponent for the diffusion-wave equation
\begin{equation}\label{diffwave}
	\left\{ \begin{array}{ll}
		\partial_{t}^{\alpha} u = \Delta u + |u|^{\rho-1}u,~\mbox{in}~[0,\infty)\times \Omega,\\
		u=0,~\mbox{on}~[0,\infty)\times \partial\Omega,~~\\
		u(0,x) = u_0(x),~ u'(0,x) = u_1(x),~\mbox{in}~\Omega,&
	\end{array} \right.
\end{equation}
where $\partial_{t}^{\alpha}$ is Caputo's fractional derivative,  $\alpha \in (1, 2)$, and $u_0,u_1\in L^{q}(\Omega)$.  These facts highlight the compatibility of our results with the previous ones.
\end{remark}

\begin{remark}
In the context of power-type materials, the integrodifferential equation \eqref{Lorentz problem} is given by
\begin{align}\label{intpower}
	\left\{
	\begin{array}{ll}
		u_t =\dint_0^t \frac{(t-s)^{\alpha-1}}{\Gamma(\alpha)}\Delta u(x,s)\,ds+ |u|^{\rho-1}u, \quad \textrm{in } \Omega\times(0,\infty), \\
		u(x,t)=0, \quad (x,t) \in \partial \Omega\times [0,\infty), \\
		u(x,0)=u_0(x), \quad x \in \Omega,
	\end{array}\right.
\end{align}
where $\Gamma$ is the Gamma function and $\alpha\in[0,1)$. Note that the linear part of this equation is equivalent\footnote{These linear problems share the same resolvent family.} to the linear part of the fractional diffusion-wave equation \eqref{diffwave}, if we take $u_1\equiv 0$. However, these nonlinear problems have different critical exponents. In fact, as pointed out by the above remarks, the critical exponent of \eqref{intpower} is characterized by  \eqref{criticofrac}, and the critical exponent of \eqref{diffwave} is given by \eqref{criticoheat}. This fact occurs because the non-local nature of the fractional derivative affects both the linear and nonlinear parts of \eqref{diffwave} equally. Whereas in \eqref{intpower} the non-local effect is present only in the linear part of the problem. Even so, it is interesting to note that
$$\lim_{\alpha\to 0}\frac{N(\alpha+1)(\rho-1)}{2}=\frac{N(\rho-1)}{2}\quad and \quad \lim_{\alpha\to 1}\frac{N(\alpha+1)(\rho-1)}{2}=N(\rho-1).$$
\end{remark}

\subsubsection{Gradient nonlinearity and initial data in Besov spaces}
 We close this section of applications studying the problem
\begin{align}\label{Besov problem}
	\left\{
	\begin{array}{ll}
		u_t(x,t) =\dint_0^t g(t-s)\Delta u(x,s)\,ds+ c_0\|\nabla u(x,t)\|^\rho, \quad (x,t)\in \mathbb{R}^N\times(0,\infty), \\
		u(x,0)=u_0(x), \quad x \in \mathbb{R}^N, \\
	\end{array}\right.
\end{align}
with initial data in the {Besov space} $B^s_{p,q}(\mathbb{R}^N)$, where $s \in \mathbb{R}$, $1\leq p,  q\leq \infty$. From a mathematical perspective, this equation is a variant of the viscous Hamilton-Jacobi equation that incorporates memory effects. It is worth noting that there is already extensive research on diffusion equations with a gradient-type nonlinearity. We recommend the references \cite{almeida2015viana, chen2014semilinear, chipot1989some, gilding2005cauchy, kaltenbacher2019jordan, souplet1996finite} for further reading. However, as far as we know, this work is the first to deal with problem \eqref{Besov problem} in this context.

Before proceeding, we summarize some useful properties about the Besov spaces as follows:

\begin{itemize}
	\item[(a)] For $s \in \mathbb{R}$ and $1\leq p,q\leq \infty$, the operator $(I-\Delta):B^{s+2}_{p,q}(\mathbb{R}^N)\to B^s_{p,q}(\mathbb{R}^N)$ is an isomorphism.
	\item[(b)] Let $1\leq p,q\leq \infty$, $m \in \mathbb{N}$ and $s \in \mathbb{R}$. Then the equivalence of norms
	$$\|\cdot\|_{B^{s+m}_{p,q}} \approx \|\cdot\|_{B^{s}_{p,q}} + \sum_{j =1}^N \left\|\frac{\partial^m}{\partial x_j}\,\cdot\right\|_{B^{s}_{p,q}}$$
	holds.
	\item[(c)] Let $1\leq p \leq \infty$. Then, $$B^0_{p,1}(\mathbb{R}^N)\hookrightarrow L^p(\mathbb{R}^N) \hookrightarrow B^0_{p,\infty}(\mathbb{R}^N).$$
	\item[(d)] Let $1\leq p,q_1,q_2\leq \infty$, $s \in \mathbb{R}$ and $\delta>0$. Then
	$$B^{s+\delta}_{p,q_1}(\mathbb{R}^N)\hookrightarrow B^s_{p,q_2}(\mathbb{R}^N).$$
	\item[(e)] Let $1 \leq \widetilde{p} < p \leq \infty$, $1\leq q \leq \infty$ and $-\infty < s_1 \leq s_0 < \infty$ satisfying $s_0-{N}/{\widetilde{p}}= s_1 - {N}/{p}$. Then, $$B^{s_0}_{\widetilde{p},q}(\mathbb{R}^N) \hookrightarrow B^{s_1}_{p,q}(\mathbb{R}^N).$$
	\item[(f)] Let $1\leq p,q,q_0,q_1$, $-\infty<s_0<s_1<\infty$ and, for some $\theta \in (0,1)$, $s=(1-\theta)s_0+\theta s_1$. Then $$B^s_{p,q}(\mathbb{R}^N)=\left(B^{s_0}_{p,q_0}(\mathbb{R}^N),B^{s_1}_{p,q_1}(\mathbb{R}^N)\right)_{\theta,q}$$
	with equivalent norms.
	\item[(g)] For $1\leq p,q\leq \infty$ and $s \in \mathbb{R}$, the operator $-\Delta$ with domain $$\mathcal{D}(-\Delta)=B^{s+2}_{p,q}(\mathbb{R}^N)\subset B^s_{p,q}(\mathbb{R}^N)$$ has spectrum $\sigma(-\Delta)=[0,\infty)$ and is sectorial of angle $0$ on $B^s_{p,q}(\mathbb{R}^N)$.
	\item[(h)]Let $1\leq p \leq \infty$. Then, $$B^1_{p,1}(\mathbb{R}^N)\hookrightarrow W^{1,p}(\mathbb{R}^N) \hookrightarrow B^1_{p,\infty}(\mathbb{R}^N).$$
\end{itemize} 
The proof of Items (a)-(g) can be founded in \cite{sawano2018theory},  while (h) is a consequence of (b) and (c).

Now, for $s \in \mathbb{R}$, let us consider the operator $A:=\Delta$ on $X_0:=B^{-2+s}_{p,q}(\mathbb{R}^N)$, with domain $$\mathcal{D}(A):=B^{s}_{p,q}(\mathbb{R}^N)\subset X_0,$$ 
and  $X_k:=\mathcal{D}(A^k)$, with norm given by $\|x\|_{X_k}:=\|(I-A)^kx\|_{X_0}$, $k=1,2$. Then $-A$ is sectorial operator of angle $0$; by real interpolation we define
$$X_{k+\theta}=\left(X_{k},X_{k+1}\right)_{\theta,q}, \quad \theta \in (0,1),\, k \in \mathbb{N},$$
and hence
$$X_{\alpha}=B^{2(\alpha-1)+s}_{p,q}(\mathbb{R}^N), \quad \alpha \in [0,2].$$
In this formulation, \eqref{Besov problem} is rewritten as
\begin{align}\label{abstract Besov problem}
	\left\{
	\begin{array}{ll}
		u'(t)=\dint_{0}^{t}g(t-s)Au(s)ds + f(u(t)), \quad t>0, \\
		u(0)=u_0 \in X_1,
	\end{array}
	\right.
\end{align}
where $f(u):=c_0\|\nabla u\|^\rho$.

We start estimating the behavior of the nonlinearity $f$ on the Besov spaces scale.
\begin{lema}\label{lemma f sobolev-lebesgue in besov problem}
	Let $1<\rho<p<\infty$ and $c_0 \in \mathbb{R}$. Then, $f:W^{1,p}(\mathbb{R}^N)\to L^{\frac{p}{\rho}}(\mathbb{R}^N)$ is well-defined and satisfies
	$$\|f(u)-f(v)\|_{L^{p/\rho}}\leq c\|u-v\|_{W^{1,p}}\left(\|u\|_{W^{1,p}}^{\rho-1}+\|v\|_{W^{1,p}}^{\rho-1}\right), \quad u,v \in W^{1,p}(\mathbb{R}^N),$$
	where $c=\rho|c_0|$.
	\begin{proof}
		Let $u,v \in W^{1,p}(\mathbb{R}^N)$. By Lemma \ref{lemma non linearity}, for all $x \in \mathbb{R}^N$, we have
		\begin{align*}
			|\|\nabla u(x)\|^\rho-\|\nabla v(x)\|^\rho| &\leq \rho |\|\nabla u(x)\|-\|\nabla v(x)\|| \left(\|\nabla u(x)\|^{\rho-1}+\|\nabla v(x)\|^{\rho-1}\right)\\
			& \leq \rho \|\nabla u(x)-\nabla v(x)\| \left(\|\nabla u(x)\|^{\rho-1}+\|\nabla v(x)\|^{\rho-1}\right).
		\end{align*}
		Then, by Hölder's inequality,
		\begin{align*}
			\|f(u)-f(v)\|_{L^{p/\rho}} &\leq c \|\nabla u-\nabla v\|_{L^{p}} \left\|\|\nabla u\|^{\rho-1}+\|\nabla v\|^{\rho-1}\right\|_{L^{\frac{p}{\rho-1}}} \\
			& \leq c \|\nabla u-\nabla v\|_{L^{p}} \left\|\|\nabla u\|^{\rho-1}\right\|_{L^{\frac{p}{\rho-1}}}+\left\|\|\nabla v\|^{\rho-1}\right\|_{L^{\frac{p}{\rho-1}}} \\
			& =c\|\nabla u-\nabla v\|_{L^{p}} \left(\left\|\nabla u\right\|_{L^p}^{\rho-1}+\left\|\nabla v\right\|_{L^p}^{\rho-1}\right) \\
			&\leq c \|u- v\|_{W^{1,p}} \left(\left\| u\right\|_{W^{1,p}}^{\rho-1}+\left\| v\right\|_{W^{1,p}}^{\rho-1}\right).
		\end{align*}
	\end{proof}
\end{lema}

Next, we apply Lemma \ref{lemma f sobolev-lebesgue in besov problem} and suitable Besov embeddings to derive the necessary estimates on $f$.

\begin{lema}
	Let $1<p<\infty$, $1\leq q \leq \infty$, $1<\rho<\min\left\{p,1+\frac{p}{N}\right\}$, $1-\left(\frac{1}{\rho-1}-\frac{N}{p}\right)<s\leq1$ and
	$$X_{\alpha}:=B^{2(\alpha-1)+s}_{p,q}\left(\mathbb{R}^N\right), \quad \alpha \in [0,2].$$
	If $1<\zeta_g<2/\chi,$  where $\chi(s,p,\rho) =1+\left(1-s+N/p\right)\left(\rho-1\right)$, then the map $f:X_{1+\varep} \to X_{\gamma(\varep)}$ is well defined and satisfies

	$$\|f(u)-f(v)\|_{X_{\gamma(\varep)}}\leq c\|u-v\|_{X_{1+\varep}}\left(\|u\|_{X_{1+\varep}}^{\rho-1}+\|v\|_{X_{1+\varep}}^{\rho-1}\right), \quad u,v \in X_{1+\varep},$$
	for each $\varep$ such that
	\begin{equation}\label{qju}
		\frac{1-s}{2}<\varep<\frac{1-s}{2}+\frac{1}{\rho}\left(\frac{1}{\zeta_g}-\frac{\chi}{2}\right),
	\end{equation}
	and $\gamma(\varep)=\rho\varep+1-1/\zeta_g$.
	\begin{proof}
		From the first inequality in \eqref{qju}, we have $2\varep+s>1$. Then,
		\begin{equation}\label{ape}
			X_{1+\varep}=B^{2\varep+s}_{p,q}(\mathbb{R}^N) \hookrightarrow B^1_{p,1}(\mathbb{R}^N)\hookrightarrow W^{1,p}(\mathbb{R}^N).
		\end{equation}
		Now, we desire to prove that
		\begin{equation}\label{apc}
			L^{\frac{p}{\rho}}(\mathbb{R}^N)\hookrightarrow X_{\gamma(\varep)}.
		\end{equation}
       By the second inequality in \eqref{qju}, we get
		%
		\begin{align}\label{lap}
			\gamma(\varep)&<\rho\left(\frac{1-s}{2}+\frac{1}{\rho}\left(\frac{1}{\zeta_g}-\frac{\chi}{2}\right)\right)+1-\frac{1}{\zeta_g} \nonumber\\
			&= \frac{\rho(1-s)}{2}-\frac{\chi}{2}+1 \nonumber\\
			& = \frac{\rho(1-s)}{2}-\frac{ 1+\left(1-s+N/p\right)\left(\rho-1\right)}{2}+1 \nonumber\\
			& = 1 - \frac{s}{2}-\frac{N(\rho-1)}{2 p} < 1. 
		\end{align}
		In particular, $\gamma(\varep)$ satisfies $\rho\varep+1-\frac{1}{\zeta_g}\leq \gamma(\varep)<1$. To prove \eqref{apc}, first, we note that
		$$L^{\widetilde{p}}(\mathbb{R}^N) \hookrightarrow B^0_{\widetilde{p},\infty}\hookrightarrow B^r_{\widetilde{p},\infty}(\mathbb{R}^N),$$
		where $\widetilde{p}=\frac{p}{\rho}$ and $r=\frac{N}{p}-\frac{N}{\widetilde{p}}=-\frac{N(\rho-1)}{p}$. Further, by \eqref{lap},
		\begin{align*}
			2\left(\gamma(\varep)-1\right)+s < 2 \left(-\frac{s}{2}-\frac{N(\rho-1)}{2 p}\right)+s  = -\frac{N(\rho-1)}{p} = r.
		\end{align*}
		Then,
		\begin{equation*}
			B^r_{\widetilde{p},\infty}(\mathbb{R}^N) \hookrightarrow B^{2(\gamma(\varep)-1)+s}_{p,q}(\mathbb{R}^N)=X_{\gamma
				(\varep)},
		\end{equation*}
and, consequently,
		\begin{equation*}
			L^{\widetilde{p}}(\mathbb{R}^N)\hookrightarrow X_{\gamma(\varep)}.
		\end{equation*}
	The statement follows from \eqref{ape}, \eqref{apc}, and Lemma \ref{lemma f sobolev-lebesgue in besov problem}.
	\end{proof}
\end{lema}

As consequence of our abstract results we have the following theorem.

\begin{theorem}\label{viscous}
	Let $1<p<\infty$, $1\leq q \leq \infty$, $1<\rho<\min\left\{p,1+\frac{p}{N}\right\}$, $1-\left(\frac{1}{\rho-1}-\frac{N}{p}\right)<s\leq1$ and suppose that $g$ satisfies the hypotheses of Theorem~\ref{generation + regularity} -- for instance, if $g$ has the form given in \eqref{kernelappendix} -- with $1<\zeta_g<2/\chi,$  where $\chi =1+\left(1-s+N/p\right)\left(\rho-1\right)$. Also let $\varep$ with
	\begin{equation*}
		\frac{1-s}{2}<\varep<\frac{1-s}{2}+\frac{1}{\rho}\left(\frac{1}{\zeta_g}-\frac{\chi}{2}\right).
	\end{equation*}
	Then, for all $w \in B^s_{p,q}(\mathbb{R}^N)$, there exist $r=r(w)>0$ and $\tau_0=\tau_0(w)>0$ such that the problem \eqref{abstract Besov problem} has a mild solution $u(\,\cdot\, ,u_0)$ defined on $[0,\tau_0]$ for all $u_0$ in the closed ball $B_{B^s_{p,q}}[w,r].$ Moreover, the following statements hold.
	\begin{itemize}
		\item[{(A)}] For all $\theta \in (0, \rho\varep)$, we have
		$$u(\,\cdot\,,u_0) \in C\left((0,\tau_0];B^{2\theta+s}_{p,q}(\mathbb{R}^N)\right)$$
		and, if $J \subset B_{B^{s}_{p,q}}[w,r]$ is compact, then
		$$\lim_{t\to0^+}t^{\zeta_g\theta}\sup_{u_0 \in J}\|u(t;u_0)\|_{B^{2\theta+s}_{p,q}}=0.$$
		\item[{(B)}] For each $\theta \in [0, \rho\varep)$, there exists a constant $L=L(\theta,w)$ such that
		$$t^{\zeta_g\theta}\|u(t;u_0)-u(t,u_1)\|_{B^{2\theta+s}_{p,q}}\leq L \|u_0-u_1\|_{B^{s}_{p,q}},$$
		for $t\in (0,\tau_0],\, u_0,u_1 \in B_{B^{s}_{p,q}}[w,r].$
		\item[{(C)}] The $\varep$-regular mild solution $u(\cdot;u_0)$ can be continued on an interval $[0,\tau_{\mathrm{max}})$, where $\tau_{\mathrm{max}} \in (\tau_0,\infty]$. If $\tau_{\mathrm{max}}<\infty$, then 
		$$\limsup_{t \to \tau_{\textrm{max}}\,^-} \|u(t;u_0)\|_{_{B^{2\varep+s}_{p,q}}}=\infty.$$
	\end{itemize}
Moreover, if $v$ is an $\varep$-regular mild solution on some interval $[0,\tau_1]$ for the problem \eqref{abstract Besov problem} satisfying
$$\lim_{t\to0^+} t^{\zeta_g\varep}\|v(t)\|_{_{B^{2\varep+s}_{p,q}}}=0,$$
then $\tau_1<\tau_{\mathrm{max}}$ and $v(t) = u(t;u_0)$ for all $t \in [0,\tau_1]$.
\end{theorem}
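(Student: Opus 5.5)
The plan is to obtain Theorem \ref{viscous} as a direct instance of Theorem \ref{theorem critic case well-posedness}, Theorem \ref{theorem continuation critic case} and the blow-up alternative, applied in the abstract setting \eqref{abstract Besov problem}. We take $X_0=B^{s-2}_{p,q}(\mathbb{R}^N)$ and $A=\Delta$ with $\mathcal{D}(A)=B^s_{p,q}(\mathbb{R}^N)=X_1$, so that $X_\alpha=B^{2(\alpha-1)+s}_{p,q}(\mathbb{R}^N)$ for $\alpha\in[0,2]$.

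\textbf{Abstract setting.} By property (g), $-A$ is sectorial of angle $0$. By (f) and the reiteration theorem for the real method, the scale $\{X_\alpha\}$ has the interpolation properties \eqref{reiteration property} and \eqref{interpolation inequality property} required in Section \ref{section 1}. By hypothesis, $g$ satisfies the conditions of Theorem \ref{generation + regularity} with the given $\zeta_g$. Hence all resolvent estimates (Theorems \ref{Theorem S from gamma to 1+theta}, \ref{Theorem delta S from gamma to 1+theta} and Proposition \ref{proposition S s.continuous and bounded growth in compacts}) are available.

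\textbf{Hypotheses on $f$.} The nonlinearity $f(u)=c_0\|\nabla u\|^\rho$ is autonomous, so we may take $\nu\equiv 0$. The preceding lemma then gives exactly
$$\|f(u)-f(v)\|_{X_{\gamma(\varep)}}\le c\|u-v\|_{X_{1+\varep}}\bigl(\|u\|^{\rho-1}_{X_{1+\varep}}+\|v\|^{\rho-1}_{X_{1+\varep}}\bigr),$$
with $\gamma(\varep)=\rho\varep+1-1/\zeta_g$. Setting $v=0$ and using $f(0)=0$ yields $\|f(u)\|_{X_{\gamma(\varep)}}\le c\|u\|^\rho_{X_{1+\varep}}$. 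Measurability in $t$ is trivial.

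\textbf{Ranges of parameters.} We must check $\varep,\gamma(\varep)\in(0,1)$. We have $\varep>(1-s)/2\ge 0$ since $s\le1$. The computation \eqref{lap} in the lemma gives $\gamma(\varep)<1$, and $\gamma(\varep)>0$ holds because $\zeta_g>1$. The well-posedness condition $\gamma(\varep)\ge 1-\frac1{\zeta_g}+\rho\varep$ of Theorem \ref{theorem critic case well-posedness} holds with equality by the choice of $\gamma(\varep)$. The interval for $\varep$ is nonempty precisely because $\zeta_g<2/\chi$.

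\textbf{Translating the conclusions.} Since $\gamma(\varep)-1+1/\zeta_g=\rho\varep$, the range $\theta\in(0,\gamma(\varep)-1+1/\zeta_g)$ in items (A) and (B) becomes $(0,\rho\varep)$. Moreover $X_{1+\theta}=B^{2\theta+s}_{p,q}$, which gives (A) and (B) verbatim. Item (C) follows from Theorem \ref{theorem continuation critic case} together with the blow-up alternative theorem, once we check that the constructed solution satisfies the decay condition at $t=0$. That condition follows from (A) with $\theta=\varep$, taking $J=\{u_0\}$.

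\textbf{Uniqueness statement.} The final claim combines item (C) of Theorem \ref{theorem critic case well-posedness} with the no-bifurcation remark following Theorem \ref{theorem continuation critic case}. Together these show that $v$ coincides with $u(\cdot;u_0)$ on the common interval of existence. Since $u$ can be continued up to $\tau_{max}$, $v$ is a restriction of the maximal solution, and therefore $\tau_1<\tau_{max}$ whenever $\tau_{max}$ is finite.

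The main, though mild, obstacle is the bookkeeping. One must ensure the real-interpolation scale with fixed $q$ is legitimate under (f), and that the norm on $X_1$ coincides, up to equivalence, with $\|\cdot\|_{B^s_{p,q}}$ via (a). Once these identifications are in place, the rest is a direct transcription of the abstract results.
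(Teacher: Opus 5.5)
Your route is the paper's route. The paper gives no separate proof of Theorem \ref{viscous}; it presents it as a direct consequence of Theorem \ref{theorem critic case well-posedness}, Theorem \ref{theorem continuation critic case}, the blow-up alternative and the preceding Besov lemma. Your translation of the conclusions ($\nu\equiv 0$, $X_{1+\theta}=B^{2\theta+s}_{p,q}$, $\gamma(\varep)-1+1/\zeta_g=\rho\varep$, decay at $t=0$ from (A) with $\theta=\varep$) is fine.

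\textbf{The gap is in the parameter check, and the paper shares it.} You assert that \eqref{lap} yields $\gamma(\varep)<1$. In fact \eqref{lap} only gives $\gamma(\varep)<1-\frac{s}{2}-\frac{N(\rho-1)}{2p}$, which is $\le 1$ only when $s\ge -N(\rho-1)/p$. Equivalently, only then does the whole $\varep$-interval lie below $1/(\rho\zeta_g)$. The hypothesis $s>1-\frac{1}{\rho-1}+\frac{N}{p}$, which is just $\chi<2$, does not imply this when $\rho$ is close to $1$. Take $N=1$, $p=2$, $\rho=\zeta_g=\frac{11}{10}$ (the power kernel with $\alpha=\frac{1}{10}$) and $s=-1$. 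Every hypothesis holds: $\chi=\frac54$ and $\zeta_g=\frac{11}{10}<\frac{2}{\chi}=\frac85$. Yet the admissible $\varep$ form the interval $\bigl(1,1+\frac{125}{484}\bigr)$, so $\varep>1$, $\gamma(\varep)>1$ and $\rho\varep>1$.

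In this regime the abstract results you invoke are not available. They require $\varep,\gamma(\varep)\in(0,1)$, and Lemma \ref{lemma T from gamma to 1+theta} and Theorem \ref{Theorem S from gamma to 1+theta} need $\theta\le\gamma\le 1$. This is not a removable technicality, because the resolvent family gains only one level of the scale. For the power kernel, $S(t)$ is the Fourier multiplier $E_{1+\alpha}(-t^{1+\alpha}|\xi|^2)\sim |\xi|^{-2}t^{-1-\alpha}/\Gamma(-\alpha)$ as $|\xi|\to\infty$. So already for $c_0=0$ one has $S(t)u_0\notin B^{2\theta+s}_{p,q}$ for $\theta>1$ and suitable $u_0\in B^s_{p,q}$, which contradicts (A) for $\theta\in(1,\rho\varep)$. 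An extra hypothesis is therefore needed, e.g. $s\ge -N(\rho-1)/p$, which still covers the case $s=0$ of the subsequent remark. With it, $\gamma(\varep)<1$, hence $\varep<\frac{1}{\rho\zeta_g}<1$, and your transcription goes through.

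Two smaller points. First, your appeal to (g) is not harmless for $q=\infty$. There $B^{s}_{p,\infty}$ is not dense in $B^{s-2}_{p,\infty}$, so $-A$ is not sectorial in the paper's sense and the resolvent family is not strongly continuous at $t=0$. Proposition \ref{proposition S s.continuous and bounded growth in compacts}, which uses density of $X_{1+\beta}$ in $X_1$, also fails. One needs $q<\infty$, or the closure of the Schwartz class in $B^s_{p,\infty}$ as phase space.

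Second, your derivation of $\tau_1<\tau_{\max}$ presupposes what it proves (``$v$ is a restriction of the maximal solution''). If $\tau_1\le\tau_0$ there is nothing to show. Otherwise $v$ is itself a continuation of $u(\cdot\,;u_0)$, so $\tau_1\le\tau_{\max}$. Theorem \ref{theorem continuation critic case}, applied to $v$ (which has the required decay at $t=0$), then extends it beyond $\tau_1$, whence $\tau_1<\tau_{\max}$.
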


\begin{remark}
In \cite{BSW}, Ben-Artzi, Souplet and Weissler consider the viscous Hamilton-Jabobi equation 
$$
	\left\{
\begin{array}{ll}
	u_t(x,t) =\Delta u(x,t)+ c_0\|\nabla u(x,t)\|^\rho, \quad (x,t)\in \mathbb{R}^N\times(0,\infty), \\
	u(x,0)=u_0(x), \quad x \in \mathbb{R}^N, \\
\end{array}\right.
$$
proving that, among other things, for initial data $u_0\in L^{p}(\mathbb{R}^N)$, $1\leq p < \infty$,  the value
$$p_{c}=\frac{N(\rho-1)}{2-\rho}$$
plays a critical role for the problem, when $\rho<2$. Indeed, the problem is well-posed when $p\ge p_{c}$, and ill-posed if $1\le p< p_{c}$. 

Note that if $s=0$ we have
$$B^0_{p,1}(\mathbb{R}^N)\hookrightarrow L^p(\mathbb{R}^N) \hookrightarrow B^0_{p,\infty}(\mathbb{R}^N),$$
for any $1\leq p \leq \infty$; this suggest that the spaces $B^0_{p,q}(\mathbb{R}^N)$ is close to $L^{p}(\mathbb{R}^N)$, $1\leq p,q \leq \infty$.
Furthermore, taking $s=0$ in Theorem \ref{viscous}, the condition on $\zeta_g$ for the well-posedness of \eqref{Besov problem} becomes $1<\zeta_g<2/\chi,$  where $\chi =1+\left(1+N/p\right)\left(\rho-1\right)$, that is,
$$p>\frac{N\zeta_g(\rho-1)}{2-\rho\zeta_g},$$
highlighting the influence of the material function in the analysis of the problem. We recover $p_{c}$ if we formally take $\zeta_g=1$.

\end{remark}

\section*{Appendix}
As the problems presented in Section \ref{chapter aplications}, many other can be formulated in the abstract format
$$u'(t)=\int_0^t g(t-s)Au(s)ds+f(t,u(t)), \quad t>0, \quad u(0)=u_0.$$
A rich source of them is the continuum mechanics for {materials with memory}, i.e., the theory of viscoelastic materials. In such problems, the scalar kernel $g$, also called {material function}, is associated with the physical properties\footnote{More precisely, to the {shear modulus}.} of the type of material under consideration. With these facts in mind, the prototype of material function we consider has the form \eqref{kernelappendix}, that is, 
\begin{equation*}
	g(t)=\sum_{i=1}^n k_it^{\alpha_i -1}e^{c_i t}, \quad t>0,
\end{equation*}
where $k_i>0, \alpha_i>0, c_i \in \mathbb{R}.$ We mention that many types of materials, as the {Hookean solid}, the {Maxwell fluid}, the {Poynting-Thompson solid}, and the {power type materials}, are described by this kind of material functions, see \cite[Chapter 5]{pruss2013evolutionary} with $a(t)=(1\ast g)(t)$, $t>0$.

In order to apply Theorem \ref{generation + regularity}, such a function $g$ must satisfies the following conditions:
\begin{itemize}
	\item[(B1)] $g \in L^1_{loc}([0,\infty);\mathbb{C})$ is a non identically zero Laplace transformable function;
	\item[(B2)] $\widehat{g}(\cdot)$ admits meromorphic extension to some sector $\Sigma(\omega_0,\eta_0+\pi/2)$ with $\omega_0 \geq 0$ and $\eta_0 \in (0,\pi/2]$, and $\widehat{g}(\lambda)\neq 0$ for all $\lambda \in \Sigma(\omega_0,\eta_0+\pi/2)$;
	\item[(B3)] for each $\omega_1>\omega_0$ and $\eta_1 \in (0,\eta_0)$, there exists $\psi_1 \in (\psi_0,\pi/2)$ such that
	$$-\lambda / \widehat{g}(\lambda) \in\mathbb{C} \backslash \Sigma[0,\psi_1], \quad \textrm{for all }\lambda \in \Sigma(\omega_1,\eta_1+\pi/2);$$
	\item[(B4)] for some $\omega>\omega_0$ and $\eta \in (0,\eta_0)$, there is $\zeta_g>1$ such that
	$$\limsup_{\substack{|\lambda|\to\infty, \\ \lambda \in \Sigma(\omega,\eta+\pi/2)}}\frac{1}{|\widehat{g}(\lambda)||\lambda|^{\zeta_g-1}}<\infty.$$
\end{itemize}

Note that if $g$ is as before, with $k_i>0$, $\alpha_i>0$, $c_i \in \mathbb{R}$, then
$$\widehat{g}(\lambda)=\sum_{i=1}^n \frac{k_i\Gamma(\alpha_i)}{(\lambda-c_i)^{\alpha_i}},$$
where $\Gamma(\cdot)$ denotes the {gamma function}. We claim that, if
	$$\alpha:=\max_i\{\alpha_{i}\} \leq 1 - \frac{\psi_0}{\pi/2},$$
	then (B1)-(B4) are satisfied with
	$$\eta_0:=\frac{(1-\alpha)\frac{\pi}{2}-\psi_0}{(1+\alpha)}, \quad \omega_0:=\max\{0,c_1,...,c_n\}\quad \mbox{and} \quad \zeta_g:=1+\min_{i}\{\alpha_{i}\}.$$
Indeed, to prove that, for $\omega_1>\omega_0$ and $\eta_1 \in (0,\eta_0)$, let $\psi_1$ be defined by the equality
$$\eta_1=\frac{(1-\alpha)\frac{\pi}{2}-\psi_1}{(1+\alpha)},$$
that is 
$$\psi_1:=\pi - (\alpha+1)(\eta_1+\pi/2).$$
Then, $\psi_1 \in (\psi_0,\pi/2)$ and for each $\lambda \in \Sigma(\omega_1,\eta_1+\pi/2)$ and $i=1,...,n$, we have
\begin{align*}
	&\lambda - c_i \in \Sigma(\omega_1-c_i,\eta_1+\pi/2) \subset \Sigma(0,\eta_1+\pi/2) \\
	\Rightarrow \quad& \frac{1}{\lambda-c_i} \in  \Sigma(0,\eta_1+\pi/2) \\ 
	\Rightarrow \quad& \frac{1}{(\lambda-c_i)^{\alpha_i}} \in \Sigma(0,\alpha_i(\eta_1+\pi/2))\subset \Sigma(0,\alpha(\eta_1+\pi/2))\\
	\Rightarrow \quad& \frac{k_i \Gamma(\alpha_i)}{(\lambda-c_i)^{\alpha_i}} \in \Sigma(0,\alpha(\eta_1+\pi/2)).
\end{align*}
Hence,
\begin{align*}
	&\widehat{g}(\lambda)= \sum_{i=1}^k \frac{k_i \Gamma(\alpha_i)}{(\lambda-c_i)^{\alpha_i}} \in \Sigma(0,\alpha(\eta_1+\pi/2)) \in \Sigma(0,\alpha(\eta_1+\pi/2))\\
	\Rightarrow \quad & 1/\widehat{g}(\lambda) \in \Sigma(0,\alpha(\eta_1+\pi/2))\\
	\Rightarrow \quad & \lambda(1/\widehat{g}(\lambda)) \in \Sigma(0,(\alpha+1)(\eta_1+\pi/2))\\
	\Rightarrow \quad&- \lambda/\widehat{g}(\lambda) \in \mathbb{C} \backslash \Sigma[0,\pi - (\alpha+1)(\eta_1+\pi/2)]=\mathbb{C}\backslash \Sigma[0,\psi_1],
\end{align*}
what checks (B3). Finally, let $\zeta_g=1+\min_{i}\{\alpha_{i}\}$ and let $\{i_1<...<i_r\}$ be the set of all indexes $i$ such that $\alpha_{i}$ reaches the minimum between $\alpha_1,...,\alpha_n$. Then, $\alpha_{i_1}=...=\alpha_{i_r}=\zeta_g-1$ and $\alpha_{i}>\zeta_g-1$, for $i \notin \{i_1<...<i_r\}$, from whence
\begin{align*}
	|\widehat{g}(\lambda)||\lambda|^{\zeta_g-1}&=\left|\sum_{i=1}^n k_i\Gamma(\alpha_i)\frac{\lambda^{\zeta_g-1}}{(\lambda-c_i)^{\alpha_i}}\right| \to \sum_{s=1}^r k_{i_s}\Gamma(\alpha_{i_s})>0, \quad \textrm{as }|\lambda|\to \infty,
\end{align*}
and (B4) holds.

\noindent{\bf Declarations:} 
\begin{itemize}
	\item	 Ethics approval and consent to participate: Not applicable.
	\item	Consent for publication: Not applicable.
	\item	Availability of data and materials: Not applicable.
	\item	Competing interests: The authors have no relevant financial or non-financial interests to disclose.
	\item	Funding: Bruno de Andrade is partially supported by CNPQ/Brazil (grant 310384/2022-2) and FAPITEC/Sergipe/Brazil (grant 019203.01303/2024-1).
	\item	Authors' contributions:  B. de Andrade contributed to the implementation of the research, the analysis of the results, and the writing of the manuscript. M. G. Santana contributed to the implementation of the research, the analysis of the results, and the writing of the manuscript.
	\item	Acknowledgements:  Not applicable.
\end{itemize}



\end{document}